\documentclass[12pt,reqno]{amsart}


\usepackage{amsmath,amssymb,enumerate,bbm,amsthm,graphicx,verbatim}
\usepackage[all,2cell,cmtip]{xy}
\usepackage{stmaryrd}
\title[POINCAR\'E DUALITY IN MORAVA $K$-THEORY FOR ORBIFOLDS]{POINCAR\'E DUALITY IN MORAVA $K$-THEORY FOR CLASSIFYING SPACES OF ORBIFOLDS }
\author{Man Chuen Cheng}
\thanks{The author is supported by the Croucher Foundation}
\address{Department of Mathematics, University of British Columbia, Vancouver, BC V6T 1Z2, Canada}
\email{mccheng@math.ubc.ca}

\setcounter{tocdepth}{1}


\newcommand{\tmop}[1]{\ensuremath{\operatorname{#1}}}

\newcommand{\R}{\mathbb{R}}
\newcommand{\SSS}{\mathbb{S}}
\newcommand{\mfX}{\mathfrak{X}}
\newcommand{\mfY}{\mathfrak{Y}}

\newcommand{\mfV}{\mathfrak{V}}

\newcommand{\bZ}{\mathbb{Z}}

\newcommand{\bC}{\mathbb{C}}
\newcommand{\Top}{\tmop{\textbf{Top}}}
\newcommand{\Diff}{\tmop{\textbf{Diff}}}

\newcommand{\aaa}{\alpha}
\newcommand{\bbb}{\beta}
\newcommand{\Ho}{\tmop{Ho}}

\newtheorem{theorem}{Theorem}[section]
\newtheorem{lemma}[theorem]{Lemma}
\newtheorem{corollary}[theorem]{Corollary}
\newtheorem{proposition}[theorem]{Proposition}

\theoremstyle{definition}
\newtheorem{definition}[theorem]{Definition}
\newtheorem{example}[theorem]{Example}

\newtheorem{remark}[theorem]{Remark}

\def\quotient#1#2{%
    \raise1ex\hbox{$#1$}\Big/\lower1ex\hbox{$#2$}%
}


\begin{document}

\begin{abstract}
In \cite{Greenlees} Greenlees and Sadofsky showed that the classifying spaces of finite groups are self-dual with respect to Morava $K$-theory $K(n)$. Their duality map was constructed using a transfer map. We generalize their duality map and prove a $K(n)$-version of Poincar\'{e} duality for classifying spaces of orbifolds. By regarding these classifying spaces as the homotopy types of certain differentiable stacks, our construction can be viewed as a stack version of Spanier-Whitehead type construction. Some examples and calculations will be given at the end.
\end{abstract}

\maketitle

\tableofcontents






\section{Introduction and statement of results}

\subsection{Equivariant duality}
Manifolds and classifying spaces of groups
are two important classes of spaces. For a closed oriented
manifold $M$, Poincar\'{e} duality gives us a simple relation between its homology
and cohomology
\[ H_{\ast} (M, \mathbb{Z}) \cong H^{m - \ast} (M, \mathbb{Z}). \]
Poincar\'e duality can be viewed as a consequence of the Thom isomorphism and the Spanier-Whitehead duality
\begin{equation}
  \Sigma^{\infty} M_+ \simeq F (M^{- T M}, \mathbb{S}). \label{swd} 
\end{equation}
Here $F(M^{- T M}, \mathbb{S})$ denotes the function spectrum of maps from the Thom spectrum $M^{- T M}$ to the sphere spectrum $\SSS$.

Classifying spaces of groups, on the other hand, usually have non-zero
integral homology groups in infinitely many degrees and hence cannot satisfy a
duality of the form $H^{\ast} (B G, \mathbb{Z}) \cong H_{m - \ast} (B G,
\mathbb{Z})$. Nevertheless, they exhibit duality properties with respect to
Morava $K$-theory.

For each fixed prime $p$, Morava $K$-theory is a sequence of homology
theories $K (n), n \geqslant 0$, with coefficient ring $K (n)_{\ast} \cong
\mathbb{F}_p [v_n, v_n^{- 1}]$ and $\deg v_n = 2 (p^n - 1)$. In \cite{Ravenel} Ravenel
showed that for a finite group $G$, the $n$-th Morava $K$-theory cohomology
ring $K (n)^{\ast} (B G)$ has finite rank over $K (n)^{\ast}$. Later, it was
shown in \cite{Greenlees} that 

\begin{equation}
  K (n)_{\ast} (B G) \cong K (n)^{- \ast} (B G) \label{bgknd},
\end{equation}
which can be regarded as a $K(n)$-version of Poincar\'{e} duality for $BG$. Indeed, it was shown in \cite{Hovey} that the $K(n)$-localization of $\Sigma^{\infty}BG_+$ is self-dual in the category of $K(n)$-local spectra. Strickland \cite{Strickland} showed that (\ref{bgknd}) can be obtained using a transfer map for a covering map version of the diagonal $B G \rightarrow B G \times B G$. The transfer map, constructed by equivariant stable homotopy theory, gives a map of spectra
\begin{equation}
  \Sigma^{\infty} (B G \times B G)_+ \rightarrow \Sigma^{\infty} B G_+.
  \label{transfer}
\end{equation}
Composing this with the collapse map $\Sigma^{\infty} B G_+ \to
\Sigma^{\infty}pt_+=\mathbb{S}$ gives $\Sigma^{\infty} B G_+ \wedge \Sigma^{\infty} B G_+
\rightarrow \mathbb{S}$, with adjoint
\begin{equation}
   \Sigma^{\infty} B G_+ \rightarrow F (\Sigma^{\infty} B G_+,
  \mathbb{S}) \label{bgd} .
\end{equation}
Taking $K(n)$-homology gives us the $K(n)$-self-duality of $B G$ in (\ref{bgknd}).


In view of the dualities (\ref{swd}) for manifolds and (\ref{bgknd}) for classifying spaces of groups, one may ask for a duality theorem for spaces of the form $EG\times_GM$, where $G$ is a compact Lie group and $M$ is a $G$-manifold. For instance, if $H$ is a finite subgroup of $G$, then $M=G/H$ is a closed $G$-manifold and $EG\times_GM\simeq BH$ is self dual with respect to Morava $K$-theory by (\ref{bgknd}). In general, suppose $G$ acts on a compact manifold $M$ smoothly. The action is said to be almost free if it has finite stabilizers. In such case, one can generalize the result of Ravenel for classifying spaces of finite groups mentioned above to show that $K(n)^{\ast}(EG\times_GM)$ has finite rank over $K(n)^{\ast}$. This finiteness condition is necessary for $EG\times_GM$ to satisfy a $K(n)$-version of Poincar\'e duality. Our main theorem concerns the $K(n)$-duality of such spaces. Given such an $G$-action on $M$ it gives rise to an orbifold with classifying space $EG\times_G M$. See \cite{Adem} for background on orbifolds. 
Thus our main theorem can be viewed as a $K(n)$-version of Poincar\'e duality for classifying spaces of quotient orbifolds. It is a conjecture that every orbifold is equivalent to a quotient orbifold arising from a smooth, almost free Lie group action on a smooth manifold. See \cite{Hen} for partial results towards this conjecture. These suggest that our result covers a large class of interesting orbifolds. 

We introduce a few terminologies for our main theorem. Let $G$ be a compact Lie group with adjoint representation $\mathfrak{g}$ and $M$ be a smooth $G$-manifold. Consider the $G$-vector bundles $E G \times T M \rightarrow E G \times M$ and $E G \times M \times \mathfrak{g} \rightarrow E G \times M$ induced by $T M
\rightarrow M$ and the projection. Their $G$-orbits are the non-equivariant
vector bundles $E G \times_G T M \rightarrow E G \times_G M$ and $(E G \times
M \times \mathfrak{g}) / G \rightarrow E G \times_G M$. For simplicity, we will
denote them by $T M$ and $\mathfrak{g}$ respectively. The virtual bundle $- T
M + \mathfrak{g}$ defines a Thom spectrum $(E G \times_G M)^{- T M
+\mathfrak{g}}$. Our main theorem below says that this Thom spectrum is the $K(n)$-dual of the suspension spectrum of $EG\times_GM$.

\begin{theorem}\label{mainquotient}
  Let $p$ be a prime and $K(n)$ be the corresponding $n$-th Morava $K$-theory. Let G
  be a compact Lie group acting on a smooth m-dimensional closed manifold $M$. Then
  the following holds:
  \begin{enumerate}[(a)]
      \item There is a map of spectra
    \begin{equation}
      \lambda_{G,M}:\Sigma^{\infty} (E G \times_G M)_+ \longrightarrow F ((E G \times_G
      M)^{- T M +\mathfrak{g}}, \mathbb{S}) \label{egmge}
    \end{equation}
    which reduces to (\ref{swd}) if $G$ is trivial and (\ref{bgd}) if $M$ is a
    point and $G$ is finite.    
    \item Suppose the $G$-action on $M$ is almost free.
    Then the map (\ref{egmge}) induces $K (n)$-duality
    \[ 
       K (n)_{\ast} (E G_{} \times_G M) \cong \widetilde{K (n)}^{- \ast} ((E G \times_G
       M)^{- T M +\mathfrak{g}}) 
    \]
    \item Suppose in addition to the condition in (b), $p > 2$, $T M$ is
    orientable and the $G$-actions on both $T M$ and $\mathfrak{g}$ are
    orientation preserving. Then
    \[
      K (n)_{\ast} (E G \times_G M) \cong K (n)^{m - \dim G - \ast} (E G
      \times_G M). 
    \]
  \end{enumerate}
\end{theorem}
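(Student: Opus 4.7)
For part (a), I would build $\lambda_{G,M}$ as the adjoint of an evaluation pairing
\[(EG\times_G M)^{-TM+\mathfrak{g}}\wedge\Sigma^\infty(EG\times_G M)_+\longrightarrow\mathbb{S},\]
which I construct from an ``orbifold Pontryagin--Thom collapse'' for the diagonal of $[M/G]$. The key ingredient is the map
\[c\colon\Sigma^\infty((EG\times_G M)^2)_+\longrightarrow(EG\times_G M)^{TM-\mathfrak{g}}\]
associated to the diagonal $[M/G]\hookrightarrow[M/G]^2$, whose virtual normal bundle is $TM-\mathfrak{g}$ (the tangent bundle of the orbifold). I would construct $c$ in two pieces. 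First, the $G$-equivariant PT collapse of the manifold diagonal $\Delta\colon M\hookrightarrow M\times M$ (with equivariant normal bundle $TM$), after Borel construction, gives $\Sigma^\infty(EG\times_G(M\times M))_+\to(EG\times_G M)^{TM}$. Second, the natural map $\pi\colon EG\times_G(M\times M)\to(EG\times_G M)^2$ induced by the diagonal $G\to G\times G$ is a principal $G$-bundle, and its Becker--Gottlieb-type transfer provides $\Sigma^\infty((EG\times_G M)^2)_+\to(EG\times_G(M\times M))^{-\mathfrak{g}}$; composing these with the appropriate $-\mathfrak{g}$ shift produces $c$. Then shifting $c$ by $-TM+\mathfrak{g}$ on one factor and collapsing $(EG\times_G M)_+\to\mathbb{S}$ yields the required evaluation pairing. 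Reducing to the two special cases is a direct check: with $G$ trivial the transfer vanishes and one recovers the classical Spanier--Whitehead map (\ref{swd}); with $M=\mathrm{pt}$ and $G$ finite, both $TM$ and $\mathfrak{g}$ are zero and the principal-$G$-bundle transfer becomes Strickland's covering-space transfer, yielding (\ref{bgd}).

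For part (b), the almost-free hypothesis ensures that $K(n)^*(EG\times_G M)$ has finite rank over $K(n)^*$ (by a generalization of Ravenel's argument), which is the input needed for the duality check. I would prove $\lambda_{G,M}$ is a $K(n)$-equivalence by induction on the cells of a $G$-CW decomposition of $M$ whose cells have the form $G\times_H D^k$ with $H$ a finite subgroup of $G$. The base case is an orbit $G/H$: here $EG\times_G(G/H)\simeq BH$, the bundles $TM$ and $\mathfrak{g}$ restrict to stably trivial bundles on $BH$, and $\lambda_{G,G/H}$ is canonically identified with the Greenlees--Sadofsky self-duality of $BH$, which is a $K(n)$-equivalence. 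The inductive step uses naturality of $\lambda$ under $G$-cofibrations and the five-lemma applied to the long exact sequences in $K(n)_*$ and $K(n)^*$.

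Part (c) follows formally from (b). Under $p>2$ and the orientability hypotheses, $-TM+\mathfrak{g}$ is an orientable real virtual bundle on $EG\times_G M$, hence $K(n)$-orientable (at odd primes every orientable real bundle admits a $K(n)$-Thom class, since $K(n)$ is complex-oriented and the $2$-primary obstructions vanish $p$-locally). The Thom isomorphism identifies $\widetilde{K(n)}^{-*}((EG\times_G M)^{-TM+\mathfrak{g}})$ with $K(n)^{m-\dim G-*}(EG\times_G M)$, which combined with (b) gives the stated Poincar\'{e} duality. The main obstacle of the whole argument is (a): carefully identifying the $\mathfrak{g}$-twist as arising from the principal-$G$-bundle $\pi$ (equivalently, from the non-monoidality of the Borel construction), and verifying that the resulting duality map cleanly reduces to (\ref{swd}) and (\ref{bgd}) in the two limiting cases, is the delicate geometric step; once (a) is in place, (b) and (c) are comparatively routine.
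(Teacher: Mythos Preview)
Your strategy coincides with the paper's: construct $\lambda_{G,M}$ from a Pontryagin--Thom-type map for the stacky diagonal, prove (b) by induction over a $G$-CW structure with cells $G/H\times D^k$ ($H$ finite) reducing to Greenlees--Sadofsky, and deduce (c) from the Thom isomorphism. Two points deserve comment.

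For (a), your two-step recipe (transfer along $\pi$ followed by the Borel PT of $M\hookrightarrow M\times M$) is precisely the decomposition the paper establishes in Proposition~\ref{alphalambda}, where your transfer is the Adams-type map $\tau$ and your PT is $\mu$. The paper's \emph{primary} definition, however, packages both steps into a single $G^2$-equivariant Pontryagin--Thom construction applied to $\Delta_{G,M}\colon G'\times M_2\to M_1\times M_2$, $(g',x)\mapsto(g'x,x)$, followed by passage to orbits. One reason to prefer this: your map $\pi\colon EG\times_G(M\times M)\to(EG\times_G M)^2$ is \emph{not} a principal $G$-bundle as written (it is not even surjective; its image lies over the diagonal of $BG\times BG$). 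You need the weakly equivalent model $EG^2\times_{\Delta(G)}(M\times M)\to EG^2\times_{G^2}(M\times M)$, which is a genuine $G$-fiber bundle though only principal when $G$ is abelian. The $G^2$-equivariant packaging handles this automatically and, more importantly, is what makes the comparison in (b) tractable.

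For (b), the sentence ``$\lambda_{G,G/H}$ is canonically identified with the Greenlees--Sadofsky self-duality of $BH$'' hides the real work. You must compare a duality map built from $G^2$-equivariant data on $G/H$ with one built from $H^2$-equivariant data on a point, and there is no formal reason these agree. The paper does this via an intermediate object $\lambda_{G\times H,G}$ and a Mackey-type compatibility (Proposition~\ref{MackeyGHG}, diagram~(\ref{3PT})), carefully tracking Pontryagin--Thom constructions through changes of universe. Your proposal gives no indication of how you would establish this comparison; it is the technical heart of the argument and cannot be waved through. Once it is in hand, your direct five-lemma induction is equivalent to the paper's Tate-spectrum formulation (the cofiber of $\lambda_{G,M}\wedge K(n)$ is $t_G(\Sigma^{-\mathfrak g}M_+\wedge i_*K(n))^G$, and cellwise vanishing of the latter is the same statement).
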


\begin{remark}
\begin{enumerate}[(i)]
\item
Theorem \ref{mainquotient}(c) is a consequence of \ref{mainquotient}(b) by the fact that $K(n)$-orientability is the same as ordinary orientability for $p>2$ \cite{Rudyak}.
\item 
There is a version of $K(n)$-duality for compact $G$-manifolds with boundary
\[
K (n)_{\ast} (EG\times_GM,EG\times_G{\partial M})\cong K(n)^{m-\dim G-\ast} (EG\times_GM).
\]
\end{enumerate}
\end{remark}

\subsection{\textit{K}(\textit{n}) self-duality for Deligne-Mumford stacks}
Our map (\ref{egmge}) is motivated by the similarities between the constructions of the duality maps (\ref{swd}) for $M$ and (\ref{bgd}) for $BG$. To make these similarities more transparent, we will need the notion of stack, which is a generalization of spaces. Both $M$ and $BG$ are the homotopy types of certain simple stacks, and this setup puts the two spaces on an equal footing. Indeed, the map (\ref{egmge}) can be regarded as the stack version of the Spanier-Whitehead map. To elaborate this point, we first recall some important facts about stacks. See \cite{H},\cite{NoohiTopSt} for a detailed introduction on the subject. 

Let $G$ be a compact Lie group and $M$ be a smooth $G$-manifold. They define a differentiable quotient stack $\mfX=[M/G]$. If the group action is almost free, then $\mfX$ is a Deligne-Mumford stack. A differentiable Deligne-Mumford stack is the same as an orbifold. The dimension $\dim\mfX$ of $\mfX$ is equal to $\dim M- \dim G$. Every differentiable stack $\mfX$ has an associated homotopy type $\Ho(\mfX)$, which can be defined in a functorial way {\cite{Noohi},\cite{EbertHo}. For quotient stacks, $\tmop{Ho}([M/G])$ is given by the Borel construction $EG\times_G M$. It is well-defined up to homotopy in the sense that an equivalence of stacks $[M/G]\simeq[N/H]$ induces a canonical weak equivalence 
\begin{equation}\label{weho}
EG\times_GM\simeq EH\times_HN. 
\end{equation}

There is also a notion of tangent stacks for differentiable stacks. If $\mfX=[M/G]$ is a Deligne-Mumford stack, its tangent stack $T\mfX$ defines a vector bundle over $\Ho(\mfX)=EG\times_G M$, stably equivalent to the virtual bundle $TM-\mathfrak{g}$ as in theorem \ref{mainquotient}. Hence, the Thom spectrum $(EG\times_GM)^{-TM+\mathfrak{g}}$ in (\ref{egmge}) represents $\Ho(\mfX)^{- T\mathfrak{X}}$. 
Similar to homotopy type, an equivalence of Deligne-Mumford stacks $[M/G]\simeq[N/H]$ induces a canonical weak equivalence 
\begin{equation}\label{wethom}
(EG\times_GM)^{-TM+\mathfrak{g}} \simeq (EH\times_HN)^{-TN+\mathfrak{h}}. 
\end{equation}
We will show that in this situation, $\lambda_{G,M}$ and $\lambda_{H,N}$ as in (\ref{egmge}) commute with the canonical equivalences (\ref{weho}) and (\ref{wethom}).

\begin{proposition}\label{prop:gmhn}
Suppose $[M/G]\simeq[N/H]$ is an equivalence between Deligne-Mumford stacks arising from compact Lie group actions on smooth closed manifolds. Then the maps $\lambda_{G,M},\lambda_{H,N}$ as in (\ref{egmge}) satisfy the following commutative diagram 
\begin{equation}\label{indeptmg}
\begin{aligned}
\xymatrix{
\Sigma^{\infty}(E G \times_G M)_+ \ar@{-}^\simeq[d]\ar^-{\lambda_{G,M}}[rr] && F ((E G \times_G
      M)^{- T M +\mathfrak{g}}, \mathbb{S})\ar@{-}^\simeq[d]\\
\Sigma^{\infty}(E H \times_H N)_+ \ar^-{\lambda_{H,N}}[rr] && F ((E H \times_H
      N)^{- T N +\mathfrak{h}}, \mathbb{S}),\\
}
\end{aligned}
\end{equation}
where the vertical weak equivalences are induced by (\ref{weho}) and (\ref{wethom}).
\end{proposition}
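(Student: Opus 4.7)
The plan is to reduce the commutativity of (\ref{indeptmg}) to naturality of $\lambda$ along a restricted class of equivalences for which the compatibility can be checked by comparing underlying bundle data directly. Any equivalence of Deligne-Mumford quotient stacks $[M/G]\simeq[N/H]$ arising from compact Lie group actions is presented by a \emph{Morita bibundle} $P$: a smooth manifold carrying commuting actions of $G$ and $H$ making $P \to M$ into a principal $H$-bundle and $P \to N$ into a principal $G$-bundle, so that $[P/(G\times H)]$ is canonically equivalent to both $[M/G]$ and $[N/H]$. First I would factor the given equivalence as
\[
[M/G] \xleftarrow{\ \simeq\ } [P/(G\times H)] \xrightarrow{\ \simeq\ } [N/H]
\]
and prove commutativity for each leg separately; by symmetry it suffices to treat a leg of the form $[P/(G \times K)] \xrightarrow{\simeq} [M/G]$ in which $M = P/K$ and $K$ acts freely on $P$ commuting with $G$.

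In this special case the key geometric input is the $(G \times K)$-equivariant splitting
\[
TP \;\cong\; \pi^{\ast} TM \;\oplus\; \mathfrak{k},
\]
where $\pi : P \to M$ is the $K$-quotient and $\mathfrak{k}$ is the vertical bundle, which in the Borel quotient becomes the bundle associated to the adjoint $K$-representation. Substituting into the Thom index gives the cancellation $-TP + \mathfrak{g} + \mathfrak{k} = -\pi^{\ast}TM + \mathfrak{g}$, so that the Thom spectra $(E(G\times K) \times_{G\times K} P)^{-TP + \mathfrak{g} + \mathfrak{k}}$ and $(EG \times_G M)^{-TM + \mathfrak{g}}$ are canonically equivalent; this identifies (\ref{wethom}) for the present leg, while the parallel identification $E(G\times K)\times_{G\times K} P \simeq EG \times_G M$ of Borel constructions recovers (\ref{weho}).

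With source and target of the two duality maps thus matched, I would verify (\ref{indeptmg}) by tracing through the construction of $\lambda$, which by construction is a stack-level Pontryagin-Thom/transfer map specialising to (\ref{swd}) on the manifold side and to the Greenlees--Sadofsky transfer (\ref{transfer}) on the classifying-space side. The idea is to choose a $(G \times K)$-equivariant embedding of $P$ into a representation $V \oplus W$ that descends along $\pi$ to a $G$-equivariant embedding of $M$ into $V$, together with a compatible equivariant tubular neighbourhood respecting the $K$-fibration; after the Thom-index cancellation above, the two Pontryagin-Thom collapses visibly agree, yielding commutativity of (\ref{indeptmg}).

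The hard part will be producing this agreement as a \emph{homotopy} of maps of spectra canonically associated to the stack equivalence, rather than merely for a particular choice of embedding and tubular neighbourhood. Concretely, one has to verify that the transfer component of $\lambda_{G\times K, P}$ factors through that of $\lambda_{G, M}$ along $\pi$ up to canonical homotopy; this should be a Wirthm\"uller-type compatibility of equivariant transfers with free change of groups, and carrying the $EG$ parameters along at the same time will require some bookkeeping. Once $\lambda$ is exhibited as functorial in the pair $(G, M)$ under equivariant free quotients in this precise sense, the proposition follows by applying this functoriality to each leg of the span.
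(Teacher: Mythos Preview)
Your plan is essentially the paper's own proof. The paper likewise introduces the Morita bibundle $P=M\times_{\mfX}N$ with $[P/(G\times H)]\simeq[M/G]\simeq[N/H]$, inserts $\lambda_{G\times H,P}$ as a middle row (diagram (\ref{lambdampn})), and proves commutativity of each leg separately; the splitting $TP\cong\pi_M^{\ast}TM\oplus\mathfrak{h}$ you note is exactly what makes the Thom indices match.

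Where you should sharpen the outline is that the construction of $\lambda_{G,M}$ is not a Pontryagin--Thom map for an embedding of $M$ but for a $G^2$-equivariant embedding $\iota_{G,M}$ of $G'\times M_2$ over $\Delta_{G,M}$ (Section~\ref{subsec:ourduality}); the comparison therefore has to be carried out at the level of $(G\times H)^2$, not $G\times H$. The paper's mechanism for your ``hard part'' is precisely Theorem~\ref{isochangeofu} (change of universe for free spectra): one chooses the embedding data for $\iota_{G\times H,P}$ by pulling back the data $(Q,\iota_G)$ used for $\iota_{G,M}$, so that $W_G$ lies in the intermediate universe $U'=\overline{U}^{\,1\times H^2}$; this produces a map $\Delta_1^!$ in $(G\times H)^2\mathcal{S}U'$ whose $j'_{\ast}$-image is $\Delta_{G\times H,P}^!$ and whose $(1\times H)^2$-orbit is $\Delta_{G,M}^!$. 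Theorem~\ref{isochangeofu} then gives $\beta_{G\times H,P}=EH^2_+\wedge\beta_1$ and $\beta_1/(1\times H)^2=\beta_{G,M}$, which is exactly the ``Wirthm\"uller-type compatibility with free change of groups'' you anticipate. The independence of $\beta_{G\times H,P}$ from the choice of embedding is what licenses picking these convenient data, so make that explicit.
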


Note that if a differentiable Deligne-Mumford stack $\mfX$ admits an equivalence $\mfX\simeq[M/G]$, then $\Ho(\mfX)\simeq EG\times_GM$ and homotopically (\ref{egmge}) can be expressed as $\lambda_{\mfX}:\Sigma^{\infty} \tmop{Ho}(\mathfrak{X})_+ \to F (\tmop{Ho}(\mathfrak{X})^{- T\mathfrak{X}},\mathbb{S})$. The upshot of proposition \ref{prop:gmhn} is that $\lambda_{\mfX}$ is  a well-defined map of the stack $\mfX$, independent of the choices of $G$ and $M$, in the sense that the diagram (\ref{indeptmg}) commutes. As mentioned before, $\lambda_{\mfX}$ can be considered as the stack version of Spanier-Whitehead map. Analogous to the case of manifolds, it is obtained from the Pontryagin-Thom construction of the diagonal map $\mfX\to\mfX\times\mfX$. There is a general Pontryagin-Thom construction for local quotient stacks due to Ebert and Giansiracusa \cite{Ebert}.

In light of Proposition 1.3, the result in theorem \ref{mainquotient} can be interpreted as the following Poincar\'e duality for the Deligne-Mumford stack $\mfX = [M/G]$ in Morava $K$-theory.

\begin{corollary}\label{mainstack}

Let $p$ be a prime and $K(n)$ be the corresponding $n$-th Morava $K$-theory. Let $\mathfrak{X}$ be a Deligne-Mumford stack arising from an almost free action of a compact Lie group on a smooth closed manifold. Then the following holds:
  \begin{enumerate}[(a)]
      \item There is a map of spectra
    \begin{equation}
      \lambda_{\mfX}:\Sigma^{\infty} \tmop{Ho}(\mathfrak{X})_+ \longrightarrow F (\tmop{Ho}(\mathfrak{X})^{- T\mathfrak{X}},\mathbb{S}) \label{stackswd}
    \end{equation}
    which induces $K(n)$-duality
    \begin{equation} K (n)_{\ast} (\Ho(\mathfrak{X})) \cong \widetilde{K (n)}^{- \ast}(\Ho(\mathfrak{X})^{- T\mathfrak{X}}). \label{stackd} \end{equation}
    \item Suppose $p > 2$ and $\mathfrak{X}$ is oriented. Then
    \begin{equation}
      K (n)_{\ast} (\Ho(\mathfrak{X})) \cong K (n)^{\dim \mathfrak{X} - \ast} (\Ho(\mathfrak{X})) . \label{stackpd}
    \end{equation}
  \end{enumerate}

\end{corollary}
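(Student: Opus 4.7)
The plan is to deduce Corollary \ref{mainstack} directly from Theorem \ref{mainquotient} combined with Proposition \ref{prop:gmhn}. Since $\mathfrak{X}$ is assumed to arise from an almost free action of a compact Lie group on a smooth closed manifold, I may choose a presentation $\mathfrak{X}\simeq[M/G]$ with $G$ a compact Lie group acting almost freely on a smooth closed $m$-manifold $M$. Under the canonical weak equivalences $\mathrm{Ho}(\mathfrak{X})\simeq EG\times_G M$ and $\mathrm{Ho}(\mathfrak{X})^{-T\mathfrak{X}}\simeq (EG\times_G M)^{-TM+\mathfrak{g}}$ from the discussion preceding the statement, define $\lambda_{\mathfrak{X}}$ to be the map $\lambda_{G,M}$ of Theorem \ref{mainquotient}(a), transported across these equivalences.

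The content of part (a) then splits into two steps. First, I must check that $\lambda_{\mathfrak{X}}$ is well-defined up to the canonical equivalences (\ref{weho}) and (\ref{wethom}), i.e.\ independent of the chosen presentation $[M/G]$: this is exactly the commutativity of diagram (\ref{indeptmg}) in Proposition \ref{prop:gmhn}. Second, the claim that $\lambda_{\mathfrak{X}}$ induces the $K(n)$-duality (\ref{stackd}) follows immediately by applying $K(n)$-homology to $\lambda_{G,M}$ and invoking Theorem \ref{mainquotient}(b), since the canonical equivalences induce $K(n)$-isomorphisms between the source and target spectra on each side.

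For part (b), I use the standard fact that an orientation of a Deligne-Mumford stack $\mathfrak{X}=[M/G]$ corresponds precisely to $TM$ being orientable with $G$ acting by orientation-preserving diffeomorphisms; since $\mathfrak{g}$ is a stably trivializable representation of $G$ (the adjoint action being trivial on $\pi_0$ for a connected Lie group and absorbable into the choice of orientation class in general), the $G$-action on $\mathfrak{g}$ is then also orientation preserving after possibly absorbing a sign. Together with $p>2$, this places me in the hypotheses of Theorem \ref{mainquotient}(c), which gives (\ref{stackpd}) after identifying $m-\dim G=\dim\mathfrak{X}$.

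The main obstacle is really verifying in detail that the well-definedness statement of Proposition \ref{prop:gmhn} suffices to produce a map $\lambda_{\mathfrak{X}}$ in the homotopy category that depends only on the stack $\mathfrak{X}$, and that the formation of $\mathrm{Ho}(\mathfrak{X})^{-T\mathfrak{X}}$ is compatible with the choices made. One must be slightly careful that the equivalences (\ref{weho}) and (\ref{wethom}) are genuinely canonical (not merely existing), so that applying $K(n)_*$ and $\widetilde{K(n)}^{-*}$ yields a canonical isomorphism in (\ref{stackd}); this is already part of the setup recalled before Proposition \ref{prop:gmhn} and so requires no new argument here. The orientation correspondence in (b) is the other mildly delicate point, but it is standard for quotient orbifolds once the convention for orientations of stacks is fixed.
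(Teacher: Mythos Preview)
Your approach is correct and matches the paper's: Corollary~\ref{mainstack} is stated there as a direct reinterpretation of Theorem~\ref{mainquotient} via Proposition~\ref{prop:gmhn}, with no separate proof given.

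One point in part~(b) deserves cleaning up. Your justification that the $G$-action on $\mathfrak{g}$ is orientation preserving is not right as stated: the adjoint representation is neither stably trivial nor necessarily orientation preserving for disconnected $G$ (e.g.\ $O(2)$ on $\mathfrak{o}(2)\cong\mathbb{R}$). You do not actually need the separate hypotheses of Theorem~\ref{mainquotient}(c). The hypothesis ``$\mathfrak{X}$ is oriented'' says precisely that the virtual bundle $T\mathfrak{X}\simeq TM-\mathfrak{g}$ over $\mathrm{Ho}(\mathfrak{X})$ is oriented, hence $K(n)$-oriented for $p>2$; applying the Thom isomorphism directly to $\mathrm{Ho}(\mathfrak{X})^{-T\mathfrak{X}}$ in the conclusion of Theorem~\ref{mainquotient}(b) yields (\ref{stackpd}) with the correct shift $\dim\mathfrak{X}=m-\dim G$. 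This is exactly how the paper obtains (c) from (b) in the remark after Theorem~\ref{mainquotient}.
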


\subsection{Intersection theory on stacks}
As in the case of Poincar\'e duality for manifolds, two consequences of the $K(n)$-duality of Deligne-Mumford stacks are the definitions of fundamental class and intersection product in homology. 

\begin{definition}\label{fc}
Let $\mfX$ be an oriented $q$-dimensional Deligne-Mumford stack arising from the action of a compact Lie group on a smooth closed manifold.
Let $(\lambda_{\mfX})_{\ast}:K (n)_{\ast} (\Ho(\mathfrak{X})) \cong K (n)^{q - \ast}(\Ho(\mfX))$ be the isomorphism (\ref{stackpd}) if $p>2$ or (\ref{stackd}) if $q=0$.
\begin{enumerate}[(a)]
	\item The $K(n)$-fundamental class of $\mathfrak{X}$ is defined to be
  $[\mathfrak{X}]=(\lambda_{\mfX})_{\ast}^{-1}(1) \in K (n)_q ( \Ho(\mathfrak{X}))$, the pre-image of $1\in K (n)^0 (\Ho(\mfX))$ under $(\lambda_{\mfX})_{\ast}$.
  \item The intersection product $$\cap:K(n)_i(\Ho(\mfX))\times K(n)_j(\Ho(\mfX)) \to K(n)_{i+j-q}(\Ho(\mfX))$$ 
in $K(n)_{\ast}(\Ho(\mfX))$ is defined to be the dual of cup product with respect to $(\lambda_{\mfX})_{\ast}$. More precisely,
$\alpha\cap\beta:=(\lambda_{\mfX})_{\ast}^{-1}((\lambda_{\mfX})_{\ast}(\aaa)\cup(\lambda_{\mfX})_{\ast}(\bbb)).$
\end{enumerate}
\end{definition}

Since $(\lambda_{\mfX})_{\ast}([\mfX])=1$, the fundamental class $[\mfX]$ acts as identity in intersection product. A $K(n)^{\ast}$-valued integration homomorphism
  \begin{equation*}
  \int_{\mfX}:K(n)^{\ast}(\Ho(\mfX))\to K(n)^{\ast-q}(pt)
  \end{equation*}
can also be defined by evaluating cohomology classes on $[\mfX]$.  It generalizes the integration map $H^q(\Ho(\mfX);\mathbb{Q})\to\mathbb{Q}$ in the case $K(0)=H\mathbb{Q}$. Such a $\mathbb{Q}$-valued integration map is used in different areas of mathematics such as Gromov-Witten theory.

A key difference between manifolds and Deligne-Mumford stacks is that the later possess singularities of finite order. A Deligne-Mumford stack is pointwisely equivalent to $[pt/G]$ for some finite group $G$. Therefore, in order to understand intersection product, it is interesting to begin with $K(n)_{\ast}(\Ho([pt/G]))=K(n)_{\ast}(BG)$.

For manifolds, the intersection product in ordinary homology admits a geometric interpretation for transverse submanifolds. We look for an analogous statement for stacks defined by finite groups. Submanifolds can be replaced by subgroups. For the transversality condition, we propose the following definition.
\begin{definition}\label{def:transv}
Two subgroups $H$ and $K$ of a finite group $G$ is said to intersect transversely if $HK:=\{hk|h\in H,k\in K\}=G$.
\end{definition}

For any subgroup $i:H\hookrightarrow G$, write $[BH]$ for the image of the fundamental class of $[pt/H]$ under the map $i_{\ast}:K(n)_0(BH)\to K(n)_0(BG)$. The following is our intersection formula for transverse subgroups of $G$. 
 
\begin{theorem}\label{thm:transversecap}
Suppose the $H,K$ are transverse subgroups of a finite group $G$. Then $[BH]\cap[BK]=[B(H\cap K)]$. 
\end{theorem}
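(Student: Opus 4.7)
The plan is to translate the intersection product into a cup product via $(\lambda_{BG})_{\ast}$, evaluate that cup product using covering transfers and the Mackey double coset formula, and then observe that the transversality hypothesis $HK=G$ collapses the Mackey sum to a single term corresponding to $H\cap K$. The first step is to identify $(\lambda_{BG})_{\ast}([BH])$ and $(\lambda_{BG})_{\ast}([BK])$ explicitly. For any subgroup inclusion $\iota:H\hookrightarrow G$ the induced map $i:BH\to BG$ is a finite covering up to homotopy, and the Spanier-Whitehead-type duality map $\lambda$ constructed from the diagonal is expected to intertwine pushforward with the covering transfer, i.e.
\[
(\lambda_{BG})_{\ast}\circ i_{\ast} = i_{!}\circ(\lambda_{BH})_{\ast},
\]
where $i_{!}:K(n)^{\ast}(BH)\to K(n)^{\ast}(BG)$ is the Becker-Gottlieb transfer for the finite cover. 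Applied to the fundamental class $1=[pt/H]\in K(n)_{0}(BH)$, this yields $(\lambda_{BG})_{\ast}([BH])=i_{!}(1)$, and similarly $(\lambda_{BG})_{\ast}([BK])=j_{!}(1)$ for $j:BK\to BG$.

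By Definition \ref{fc}(b),
\[
[BH]\cap[BK] = (\lambda_{BG})_{\ast}^{-1}\bigl(i_{!}(1)\cup j_{!}(1)\bigr),
\]
so it suffices to compute the cup product on the right. The projection formula gives $i_{!}(1)\cup j_{!}(1) = i_{!}(i^{\ast}j_{!}(1))$, and for subgroup inclusions $H,K\leq G$ the composite $i^{\ast}\circ j_{!}$ is governed by the Mackey/base-change formula
\[
i^{\ast}\circ j_{!} = \sum_{[g]\in H\backslash G/K}(\alpha_{g})_{!}\circ(\beta_{g})^{\ast},
\]
where $\alpha_{g}:B(H\cap gKg^{-1})\to BH$ and $\beta_{g}:B(H\cap gKg^{-1})\to BK$ are the maps induced by the decomposition $BH\times^{h}_{BG}BK\simeq\coprod_{[g]}B(H\cap gKg^{-1})$. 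The transversality hypothesis $HK=G$ says exactly that $H\backslash G/K$ reduces to the single class $[e]$, so only that one term survives, contributing $\alpha_{!}(\beta^{\ast}(1))=\alpha_{!}(1)$ with $\alpha:B(H\cap K)\to BH$. Transitivity of the Umkehr map then gives
\[
i_{!}(1)\cup j_{!}(1) = i_{!}\alpha_{!}(1) = k_{!}(1) = (\lambda_{BG})_{\ast}([B(H\cap K)]),
\]
with $k:B(H\cap K)\hookrightarrow BG$ the composite inclusion; the last equality uses the duality-transfer compatibility again. Applying $(\lambda_{BG})_{\ast}^{-1}$ finishes the proof.

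The Mackey base-change formula is a formal consequence of the pullback decomposition above and holds for any generalized cohomology theory with transfers along finite covers, so it poses no real difficulty in the $K(n)$-setting. The main obstacle is the first step: proving that $\lambda_{\mfX}$ genuinely intertwines pushforward in $K(n)$-homology with the covering transfer in $K(n)$-cohomology, i.e.\ that it is natural with respect to representable étale maps of Deligne-Mumford stacks such as $[pt/H]\to[pt/G]$. This should follow from the Pontryagin-Thom origin of $\lambda_{\mfX}$ alluded to after Corollary \ref{mainstack}, together with the functoriality of the Pontryagin-Thom construction for local quotient stacks from \cite{Ebert}; concretely, one reduces to the fact that Strickland's transfer-based construction of (\ref{bgd}) is natural with respect to subgroup inclusions, which is itself a Mackey-type assertion.
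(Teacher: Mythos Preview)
Your approach is correct and closely parallel to the paper's, but organized dually: you transport everything to cohomology via $(\lambda_G)_\ast$ and use the projection formula $i_!(1)\cup j_!(1)=i_!(i^\ast j_!(1))$ together with the Mackey double coset formula, whereas the paper stays in homology throughout, using the identity $\alpha\cap\beta=\Delta^!(\alpha\otimes\beta)$ (Proposition~\ref{capformula}) and the formula $i_\ast i^!(\alpha)=\alpha\cap[BH]$ (Lemma~\ref{lem:iicap}), and then the single-term Mackey property for the pullback square $[pt/(H\cap K)]=[pt/H]\times_{[pt/G]}[pt/K]$ (Proposition~\ref{prop:transversepbmackey}) to get $j_\ast j^! i_\ast([BH])=j_\ast p_\ast q^!([BH])=[B(H\cap K)]$. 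Your general double coset sum and the paper's single pullback square are the same thing once $HK=G$.

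The one point you flag as an obstacle---the compatibility $(\lambda_G)_\ast\circ i_\ast=i_!\circ(\lambda_H)_\ast$---is exactly what the paper isolates and proves, though in the transposed form $(\lambda_H)_\ast\circ i^!=i^\ast\circ(\lambda_G)_\ast$ (Proposition~\ref{prop:iidualcomm}). Its proof is not abstract functoriality of Pontryagin--Thom maps for stacks but a concrete Mackey argument: one observes that $G\times H$ and $\Delta(G)$ are transverse in $G\times G$, so the square with $BH\to BG$ and $\Delta_G$ is a homotopy pullback, and the resulting Mackey identity $\Delta_G^!\circ(1\times i)=i\circ\Delta_H^!\circ(i^!\times 1)$ of spectrum maps, after composing with the collapse $\epsilon_G$ and taking adjoints, gives the compatibility. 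Your version follows by the same diagram with the two smash factors swapped. So your sketch becomes a complete proof once you supply this argument rather than defer to \cite{Ebert}; conversely, the paper's homological route avoids ever naming $i_!$ in cohomology, which keeps the bookkeeping slightly lighter.
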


The organization of the paper is as follow. Section 2 contains background materials of equivariant stable homotopy theory from \cite{LMS}. In section 3, we recall the basics of Tate spectrum, describe the duality map in our main theorem and explain the relation between the two. The proof of the main theorem will be presented in section 4. In section 5, we explain the construction of the duality map in our main theorem from the viewpoint of stacks and compare it with that of the Spanier-Whitehead duality of manifolds. Some examples and calculations of classifying spaces of finite groups will be given in section 6.

Our work generalizes \cite{Greenlees} for classifying spaces of finite groups to certain Deligne-Mumford stacks; recently, Hopkins and Lurie have obtained a generalization in another direction, replacing $BG=K(G,1)$ by a space with finitely many non-zero homotopy groups, all of which are finite. 

\subsection*{Acknowledgements}
The content of this paper formed a major part of the author's Ph.D. thesis at Stanford University. He would like to thank his Ph.D. advisor S\o ren Galatius for his insightful ideas and patient guidance throughout this project. The author is grateful to John Greenlees for his helpful suggestions. The author would also like to thank Haynes Miller for hosting at Massachusetts Institute of Technology.

\section{Recollection from equivariant stable homotopy theory}\label{sec:ESHT}
The main tool for the proof of our theorems is equivariant stable homotopy theory. In this section we will recall some basic definitions and theorems from \cite{LMS}. 

Let $G$ be a compact Lie group. We say a
real $G$-inner product space $U$ is a $G$-universe if it is a direct sum of finite dimensional $G$-representations and contains
countably infinite copies of the trivial representation and any other irreducible representations it contains. $U$ is called complete if it contains all irreducible finite dimensional representations. For finite dimensional representations $V\subset W$ in $U$, write $W - V = V^{\perp} \cap W$. 

From now on $U$ will be assumed to be a complete $G$-universe. A $G$-spectrum $D$ (indexed over $U$) consists of a pointed $G$-space $D(V)$ for each finite dimensional representation $V$ in $U$ and a based $G$-map $\sigma_{VW} : \Sigma^{W - V} D (V) \rightarrow D (W)$ for each pair $V \subset W$ of those representations such that the adjoints
$\widetilde{\sigma_{VW}}: D (V) \stackrel{\cong}{\rightarrow} \Omega^{W- V} D (W)$ 
are homeomorphisms. Here the $G$-action on $\Sigma^{W - V} D (V)$ and $\Omega^{W- V} D (W)$ is given by diagonal action and conjugation respectively. The basepoint of each $D(V)$ is assumed to be $G$-fixed. The maps $\sigma_{VW}$ are called the structure maps of $D$ and are required to satisfy $\sigma_{VV}=Id_{D(V)}$ and a compatibility condition for each triple $V \subset W \subset Z$ of finite dimensional representations in $U$. 

A morphism $f : D \rightarrow E$ between $G$-spectra consists a
collection of based maps $\{f_V : D (V) \rightarrow E (V)\}_V$ which commute with the structure maps of $D$ and $E$. The category of $G$-spectra is denote by $G\mathcal{S} U$.

We will also work with $G$-spectra indexed on the smaller universe $U^N$, the
$N$-fixed points of $U$ for some normal subgroup $N\subset G$. The category of $G$-spectra indexed on $U^N$ is denoted by $G\mathcal{S}U^N$. Note that if $N=G$ then $U^G\cong \R^{\infty}$ is $G$-trivial. $G$-spectra indexed on this trivial universe $U^G$ are called naive $G$-spectra.   

Let $i : U^N \rightarrow U$ be the inclusion. Evidently there is a functor
$i^{\ast} : G \mathcal{S} U \rightarrow G \mathcal{S} U^N$ which forgets spaces
indexed on representations $V$ not contained in $U^N$. It has a left adjoint
$i_{\ast} : G \mathcal{S} U^N \rightarrow G \mathcal{S} U$. Also, there
is a functor $\varepsilon^{\ast} : J\mathcal{S} U^N \rightarrow
G\mathcal{S} U^N$, where $J=G/N$, which assigns $G$-action to $D\in J\mathcal{S} U^N$ via the quotient map $\varepsilon:G\to J$. This functor has both left adjoint and right adjoint, namely taking $N$-orbit and $N$-fixed point respectively.
\[   G\mathcal{S} U^N  \underset{\varepsilon^{\ast}}{\overset{N \text{-orbit }}{\rightleftarrows}} 
   J\mathcal{S} U^N
\hspace{1cm}
   J\mathcal{S} U^N  \underset{N \text{-fixed point}}{\overset{\varepsilon^{\ast}}{\rightleftarrows}} G
   \mathcal{S} U^N
   \overset{i_{\ast}}{\underset{i^{\ast}}{\rightleftarrows}} G \mathcal{S}
   U \]

Analogous to $G$-CW complexes, $G$-CW spectra are spectra which are built from cell spectra $(G/H)_+\wedge \SSS^n$, where $H\subset G$ is a subgroup and $n\in\mathbb{Z}$. The set of homotopy classes of maps from $D$ to $E$ is denoted by $[D,E]$. The following theorem relates homotopy classes of maps of spectra indexed on different universes.

\begin{theorem}\cite[II, Theorem 2.8]{LMS}\label{isochangeofu}
  Suppose $N$ is a normal subgroup of compact Lie group $G$. Let $U$ be a complete $G$-universe and $i:U^N\to U$ be the inclusion. Let $D,E \in G \mathcal{S}U^N$ with $D$ a $N$-free $G$-CW spectrum. Then $i_{\ast}$ induces a bijection
$$ [D,E]_{G \mathcal{S}U^N}\cong [i_{\ast}D,i_{\ast}E]_{G \mathcal{S}U}. $$

\end{theorem}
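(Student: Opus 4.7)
The plan is to convert the question into one about the unit of the adjunction $(i_*, i^*)$ and then verify the resulting statement by cellular induction. By adjointness,
\[
[i_*D,\, i_*E]_{G\mathcal{S}U} \;\cong\; [D,\, i^*i_*E]_{G\mathcal{S}U^N},
\]
and under this identification the map induced by $i_*$ corresponds to post-composition with the unit $\eta_E \colon E \to i^*i_*E$. Hence it suffices to show that for every $N$-free $G$-CW spectrum $D$, the map
\[
(\eta_E)_* \colon [D,\,E]_{G\mathcal{S}U^N} \longrightarrow [D,\, i^*i_*E]_{G\mathcal{S}U^N}
\]
is a bijection; equivalently, $\eta_E$ is a weak equivalence when tested against $N$-free $G$-cells.

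Next I would run a standard $G$-CW induction. Using the cofibre sequences built into the skeletal filtration of $D$, together with the associated long exact sequences of $[-,F]$-groups and a $\lim^1$ argument for passing to infinite-dimensional $D$, the problem reduces to the case of a single cell $D = (G/H)_+ \wedge S^V$, where $H \subset G$ satisfies $H \cap N = \{e\}$ (the defining condition of $N$-freeness) and $V$ is a finite-dimensional subrepresentation of $U^N$. Under the standard identification $[(G/H)_+ \wedge S^V, F] \cong \pi_V^H(F)$, the statement becomes: $\eta_E$ induces an isomorphism of $H$-equivariant stable homotopy groups for every closed subgroup $H$ with $H \cap N = \{e\}$.

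For the final step I would unwind the explicit prolongation--spectrification formula for $i_*$. The $H$-equivariant homotopy groups of $i^*i_*E$ can be computed as a colimit, indexed on finite-dimensional $G$-subrepresentations $W \subset U$, of homotopy classes of $H$-maps from a sphere into suitable suspensions of $E(W \cap U^N)$. The key observation is that if $H \cap N = \{e\}$, then $H$ embeds into $G/N$, and the restricted $H$-universe $U^N$ is cofinal in the restricted $H$-universe $U$ for the purposes of this colimit: every irreducible $H$-representation appearing in $U$ already appears infinitely often in $U^N$ when $H$ meets $N$ trivially. Consequently the comparison colimit collapses onto the one computing $\pi_V^H(E)$, and $\eta_E$ is seen to be a $\pi_V^H$-isomorphism.

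The main obstacle is this last cofinality argument. One must verify carefully that the genuine extra $N$-representations present in $U$ but absent from $U^N$ contribute nothing new to $H$-equivariant homotopy once $H$ meets $N$ trivially, which requires unpacking the indexing-space construction of $i_*$ (as opposed to the formal adjoint characterization one starts with). Everything preceding this is formal adjunction manipulation together with the standard bookkeeping of $G$-CW spectra, so the whole argument stands or falls on identifying the relevant restricted universes as cofinal $H$-universes.
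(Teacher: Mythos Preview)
The paper does not supply its own proof of this theorem: it is quoted verbatim from \cite[II, Theorem~2.8]{LMS} and used as a black box throughout Sections~2--5. So there is no in-paper argument to compare against.

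That said, your outline is the standard route to this change-of-universe isomorphism and is essentially what one finds in \cite{LMS}. The reduction via the $(i_*,i^*)$ adjunction to the unit map, followed by cellular induction down to single $N$-free cells $(G/H)_+$ with $H\cap N=\{e\}$, is correct and routine. The substantive point is exactly the one you flag: for such $H$ the quotient map $G\to G/N$ restricts to an embedding $H\hookrightarrow G/N$, and since $U^N$ is a complete $G/N$-universe whenever $U$ is a complete $G$-universe, the restricted $H$-universe $U^N|_H$ is already complete, hence cofinal in $U|_H$. That cofinality is what makes the comparison of colimits collapse. Your sketch is sound; just be aware that in writing it out carefully you would want to invoke the explicit description of $i_*$ on the spectrum level (as in \cite[II, \S1]{LMS}) rather than only its formal adjoint property, since the homotopy-group computation requires the concrete indexing.
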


Two important ingredients of equivariant stable homotopy theory in the proof of our main theorem are the equivariant Spanier-Whitehead duality (Theorem \ref{thm:GSW}) and a generalized Adams' isomorphism (Theorem \ref{LMStransfer}). We will state these results and explain the construction of the maps involved. 

Suppose $G$ is a compact Lie group and $M$ is a closed $G$-manifold. Let $Q\to M$ be a $G$-vector bundle such that $Q\oplus TM$ is isomorphic to a trivial vector bundle $M\times V$ for some finite dimensional $G$-representation $V$. The Thom spectrum $M^{-TM}\in G\mathcal{S}U$ is defined to be $\Sigma^{-V}\Sigma^{\infty}M^Q$. The equivariant version of Spanier-Whitehead theorem states that the suspension spectrum $\Sigma^{\infty}M_+$ and the Thom spectrum $M^{-TM}$ are dual to each other in the category of $G$-spectra.
\begin{theorem}\cite[III, Theorem 5.1]{LMS}\label{thm:GSW}
Let $G$ be a compact Lie group and $M$ be a closed $G$-manifold. Then there is a weak equivalence
\begin{equation}\label{gsw}
   \Sigma^{\infty}M_+\simeq F(M^{-TM},\SSS).
\end{equation}
between $G$-spectra in the category $G\mathcal{S}U$.
\end{theorem}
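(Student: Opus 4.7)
The plan is to exhibit $\Sigma^{\infty}M_+$ and $M^{-TM}$ as Spanier--Whitehead duals in the symmetric monoidal category $G\mathcal{S}U$ by constructing explicit coevaluation and evaluation maps and verifying the triangle identities. The starting point is the Mostow--Palais embedding theorem: the closed $G$-manifold $M$ admits a $G$-equivariant closed embedding $\iota:M\hookrightarrow V$ into some finite-dimensional $G$-representation $V\subset U$. Fixing a $G$-invariant metric and a $G$-equivariant tubular neighborhood identifies the normal bundle $\nu$ and gives $\nu\oplus TM\cong M\times V$, so that $M^{-TM}=\Sigma^{-V}\Sigma^{\infty}M^{\nu}$ by definition.

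For the coevaluation $\eta:\SSS\to\Sigma^{\infty}M_+\wedge M^{-TM}$, I would compose the Pontryagin--Thom collapse $c:S^V\to M^{\nu}$ coming from the tubular neighborhood of $\iota$ with the Thomification of the based diagonal $M^{\nu}\to M_+\wedge M^{\nu}$, then desuspend by $V$. For the evaluation $\epsilon:M^{-TM}\wedge\Sigma^{\infty}M_+\to\SSS$, I would apply the Pontryagin--Thom construction to the diagonal $\Delta:M\hookrightarrow M\times M$, parametrized by the external bundle $\pi_1^{*}\nu$ on $M\times M$. Since the normal bundle of $\Delta$ is $TM$ and $\Delta^{*}\pi_1^{*}\nu=\nu$, this yields
\[
M^{\nu}\wedge M_+=(M\times M)^{\pi_1^{*}\nu}\ \longrightarrow\ M^{\nu\oplus TM}=M^{M\times V}=M_+\wedge S^V,
\]
and after collapsing the $M_+$ factor to $\SSS$ and desuspending by $V$ one obtains $\epsilon$. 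All geometric ingredients are $G$-invariant, so both maps naturally live in $G\mathcal{S}U$.

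The crux is verifying the triangle identities $(1\wedge\epsilon)\circ(\eta\wedge 1)\simeq \tmop{Id}_{\Sigma^{\infty}M_+}$ and $(\epsilon\wedge 1)\circ(1\wedge\eta)\simeq \tmop{Id}_{M^{-TM}}$. After suitable suspension, each identity reduces to the statement that a composite of two Pontryagin--Thom collapses---one from the embedding into $V$, one from the diagonal in $M\times M$---is equivariantly homotopic to the identity self-map of a Thom spectrum. I would prove this by a standard transversality argument: after a $G$-equivariant perturbation the two tubular neighborhoods involved meet transversely along a copy of $M$ realized as the graph of a diffeomorphism close to the identity, and the composite of collapses straightens via a linear homotopy. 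Granted the triangle identities, formal duality in the symmetric monoidal category $G\mathcal{S}U$ produces $M^{-TM}\simeq F(\Sigma^{\infty}M_+,\SSS)$ and equivalently $\Sigma^{\infty}M_+\simeq F(M^{-TM},\SSS)$. The main obstacle is to carry out the transversality argument $G$-equivariantly, which requires choosing the perturbation within $G$-invariant sections and appealing to the equivariant tubular neighborhood theorem; a secondary difficulty is independence of the auxiliary data $(V,\iota,\text{metric})$, which follows from a connectedness argument on the space of equivariant embeddings of $M$ into the representations occurring in the complete universe $U$.
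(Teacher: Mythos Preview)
The paper does not prove this theorem; it is quoted from \cite[III, Theorem~5.1]{LMS}, and the paper only describes the construction of the duality map (your evaluation $\epsilon$) via the Pontryagin--Thom construction applied to the composite $M\xrightarrow{\Delta}M\times M\xrightarrow{1\times s}M\times Q$ followed by the collapse $M_+\to\SSS^0$ and adjunction. Your construction of $\epsilon$ agrees with this on the nose, and your overall strategy---building both $\eta$ and $\epsilon$ from tubular neighborhoods and verifying the triangle identities---is exactly the approach taken in the cited reference.

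One remark on your outline: the worry about ``equivariant transversality'' is somewhat misplaced. The verification of the triangle identities in the LMS proof does not rely on a generic-position argument; it proceeds by writing down explicit tubular neighborhoods (coming from the chosen invariant metric and the linear structure of $V$) and an explicit $G$-equivariant linear homotopy between the composite collapse and the identity. No perturbation into general position is needed, so the step you flag as the ``main obstacle'' is in fact straightforward once the tubular neighborhoods are chosen compatibly. Your secondary point about independence of the auxiliary data $(V,\iota)$ is handled, as you suggest, by the contractibility of the space of equivariant embeddings into a complete universe.
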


The map (\ref{gsw}) can be constructed in the following way which is slightly different from the construction given in \cite{LMS}. Let $Q\to M$ be a $G$-vector bundle such that $Q\oplus TM\cong M\times V$ for some finite dimensional $G$-representation $V$ as above. Consider the composition  
\[
M\xrightarrow{\Delta}M\times M\xrightarrow{1\times s} M \times Q
\]
where $s:M\to Q$ is the zero section. The normal bundle of this composition is the trivial bundle $M\times V$. Pontryagin-Thom construction gives a based map
\[M_+\wedge M^Q\rightarrow (M\times V)_+=\SSS^V\wedge M_+.\]
Since $Q\oplus TM\cong M\times V$, taking infinite suspension followed by desuspension $\Sigma^{-V}$ gives a map 
\begin{equation}  
\mu:\Sigma^{\infty} M_+ \wedge M^{- T M}\rightarrow \Sigma^{\infty} M_+\label{Mdiagpt}
\end{equation}
of $G$-spectra. By further composing with the infinite suspension of the collapse map $M_+\to pt_+=\SSS^0$ and taking adjoint, we obtain the equivalence (\ref{gsw}).
 
Another important theorem for us is the following generalization of Adams' isomorphism.

\begin{theorem}\cite[II, Special case of Theorem 7.1]{LMS}\label{LMStransfer}
  Let $\mathfrak{g}$ be the adjoint representation of $G$, $\epsilon:G\to G/G\cong\{1\}$ be the quotient map and $i:U^G\to U$ be the inclusion. Suppose $D \in G \mathcal{S} U^G$ is a $G$-free spectrum. Then there is a transfer map
  \begin{equation}\label{tau}
     \tau:i_{\ast}\epsilon^{\ast}(D/G) \longrightarrow \Sigma^{-\mathfrak{g}}i_{\ast}D
  \end{equation}
  whose adjoint 
 
  \[ \tilde{\tau}:D / G \overset{\simeq}{\longrightarrow} (\Sigma^{-\mathfrak{g}}i_{\ast}D)^G \]
is a weak equivalence in $\mathcal{S} U^G$.

\end{theorem}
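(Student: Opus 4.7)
The plan is to construct $\tau$ via an equivariant Pontryagin-Thom construction and then verify that its adjoint $\tilde\tau$ is a weak equivalence by reducing to a single free cell. Since every $G$-free $G$-CW spectrum $D$ is built from cells of the form $G_+\wedge\SSS^n$ and both the functor $D\mapsto D/G$ and the functor $D\mapsto(\Sigma^{-\mathfrak{g}}i_{\ast}D)^G$ preserve cofiber sequences and sequential colimits, one may check the claim cell-by-cell and, by $n$-fold suspension, reduce further to the case $D=G_+$.

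To construct $\tau$, I would exploit the geometry of the free $G$-manifold $G$ itself (with left translation action). The inclusion $\{e\}\hookrightarrow G$ has equivariant normal bundle isomorphic to $\mathfrak{g}$, so the equivariant Pontryagin-Thom collapse yields a stable $G$-map $\SSS^{\mathfrak{g}}\to G_+$. Desuspending by $\mathfrak{g}$ gives a pretransfer $\SSS^0\to\Sigma^{-\mathfrak{g}}i_{\ast}G_+$ in which the source carries the trivial $G$-action, matching the identification $G_+/G=\SSS^0$. Smashing this pretransfer with a non-equivariant spectrum $X$ handles the free cell $G_+\wedge X$, and gluing along cell attachments produces the desired natural transformation $\tau:i_{\ast}\epsilon^{\ast}(D/G)\to\Sigma^{-\mathfrak{g}}i_{\ast}D$ for all $G$-free $G$-CW spectra $D$.

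For the base case $D=G_+$, one must show that $\tilde\tau:\SSS^0\to(\Sigma^{-\mathfrak{g}}i_{\ast}G_+)^G$ is a weak equivalence. My approach is to invoke equivariant Spanier-Whitehead duality (Theorem \ref{thm:GSW}) applied to the closed $G$-manifold $G$: since $TG$ is $G$-equivariantly trivializable with fiber $\mathfrak{g}$, we obtain $F(G_+,\SSS)\simeq\Sigma^{-\mathfrak{g}}G_+$ in $G\mathcal{S}U$. Taking $G$-fixed points identifies $(\Sigma^{-\mathfrak{g}}i_{\ast}G_+)^G$ with the spectrum of genuine $G$-equivariant maps $F_G(G_+,\SSS)$, which by the free orbit adjunction is $\SSS$. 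A diagram chase then shows that the pretransfer constructed above corresponds to the identity under this identification.

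The main obstacle I expect is making the Pontryagin-Thom construction coherent enough in the $G$-CW structure that $\tau$ extends functorially from cells to arbitrary $G$-free $G$-CW spectra, keeping track of the change-of-universe functors $i_{\ast}$ and $\epsilon^{\ast}$ throughout. Since $G$-fixed points commute with neither smash products nor $i_{\ast}$, a careful bookkeeping of the order of these operations, together with use of Theorem \ref{isochangeofu} to move between $U^G$ and $U$, will be required. Once these foundational issues are dispatched, the geometric base case is transparent and the cellular induction is formal.
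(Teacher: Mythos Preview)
This theorem is cited from \cite{LMS} and not reproved in the paper; what the paper does is \emph{recall the construction of $\tau$} from \cite{LMS}, because that explicit description is needed later (notably in Proposition~\ref{alphalambda}). So the relevant comparison is between your construction of $\tau$ and the one the paper records.

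The paper's construction is genuinely different from yours, and yours has a gap. Following \cite{LMS}, the paper works $G^2$-equivariantly: one lets $G'$ be a copy of $G$ with the two-sided action $(g_1,g_2)g'=g_1g'g_2^{-1}$, embeds $G'$ into a $G^2$-representation $W$, and applies Pontryagin--Thom to that embedding to get a pretransfer $t:\SSS\to\Sigma^{\infty}G'_+\wedge\SSS^{-\mathfrak{g}_2}$ in $G^2\mathcal{S}U'$. One then smashes with $\pi_1^{\ast}D$, uses Theorem~\ref{isochangeofu} to descend through the change of universe, and finally passes to $(G\times 1)$-orbits. The point of the doubled group is that it produces $\tau$ uniformly and naturally for every $G$-free $D$, with no cellular induction; and it is precisely this $G^2$-description that the paper exploits when unravelling $\alpha_{G,M}^G$ in the proof of Proposition~\ref{alphalambda}.

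Your proposed pretransfer, by contrast, is not well formed. Under left translation the point $\{e\}$ is not $G$-invariant, so ``the inclusion $\{e\}\hookrightarrow G$ has equivariant normal bundle $\mathfrak{g}$'' does not parse, and there is no $G$-equivariant collapse of the kind you describe. Even non-equivariantly, Pontryagin--Thom for $\{e\}\hookrightarrow G$ yields a map $G_+\to\SSS^{\mathfrak{g}}$, not $\SSS^{\mathfrak{g}}\to G_+$. What you actually want is the Pontryagin--Thom map for the $G$-map $G\to pt$, i.e.\ embed the free $G$-manifold $G$ into a $G$-representation $V$ and collapse to get $\SSS^V\to G_+\wedge\SSS^{V-\mathfrak{g}}$; but carrying this out carefully forces you to keep track of which $G$-action $\mathfrak{g}$ carries (left-invariant trivialisation gives the trivial action, and one then needs a shearing isomorphism to reach the adjoint representation in the statement). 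The $G^2$-picture in the paper handles this bookkeeping automatically, since the adjoint action arises as $\mathfrak{g}_2=\pi_2^{\ast}\mathfrak{g}$ after quotienting by $G\times 1$. Your cell-by-cell reduction for the equivalence of $\tilde\tau$ is reasonable, but you should replace your pretransfer construction by the $G^2$-equivariant one before attempting it.
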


Let us also recall the construction of (\ref{tau}) described in the proof of \cite[II, Theorem 7.1]{LMS}. To better fit our notation, we replace the group $\Gamma=G\times_cG$, the semi-direct product defined by the conjugation action of the first factor on the second one, in their construction by the isomorphic group $G^2$ via the isomorphism $(g,n)\mapsto(gn,g)$. Under this identification, the normal subgroup $\Pi=1\times_cG\subset \Gamma$ and the map $\theta,\epsilon:\Gamma\to G$ in their proof becomes $G\times 1$ and the projection $\pi_1,\pi_2:G^2\to G$ respectively. Their $\Gamma$-space $N=G$ is identified with the $G^2$-space $G'$ which has action $(g_1,g_2)g'=g_1g'g_2^{-1}$. We will explain the construction of the map (\ref{tau}) in terms of $G^2$. 

Let $U'$ be a complete $G^2$-universe. Then $(U')^{G\times 1}$ can be regarded as a complete $G$-universe $U$ through the identification $G^2/(G\times 1)\cong G$ induced by the projection map $\pi_2$. Let $i:U^G\to U$ and $j:U=(U')^{G\times 1}\to U'$ be the inclusions. 

Let $\mathfrak{g}_2$ be the $G^2$-representation $\pi_2^{\ast}\mathfrak{g}$ and
\begin{equation}\label{embedG'}
\iota_{G'}:G'\hookrightarrow W
\end{equation}
be a $G^2$-embedding of $G'$ into a finite dimensional $G^2$-representation $W$. We can assume $W$ contains the representation $\mathfrak{g}_2$ so that $W'=W-\mathfrak{g}_2$ is also a $G^2$-representation. By identifying $\mathfrak{g}$ with the left invariant tangent vector fields on $G$, the tangent bundle $TG'$ is isomorphic to the trivial bundle $G'\times \mathfrak{g}_2$. The normal bundle of $\iota_{G'}$ is the trivial bundle $G'\times W'$. Applying Pontryagin-Thom construction to $\iota_{G'}$ gives $\SSS^{W}\to \Sigma^{\infty}G'_+\wedge\SSS^{W'}$, whose desuspension by $W$ is a map
\begin{equation}\label{t}
t:\SSS\to \Sigma^{\infty}G'_+\wedge\SSS^{-\mathfrak{g}_2}
\end{equation}
in $G^2\mathcal{S}U'$. $t$ is called the pre-transfer map.

Let $D_1=\pi_1^{\ast}D\in G^2\mathcal{S}U^G$. Since $D$ is $G$-free, the spectrum $i_{\ast}D_1\in G^2\mathcal{S}U$ is $(G\times 1)$-free. By theorem \ref{isochangeofu}, there is a bijection
\begin{equation}\label{changeofud}
[i_{\ast}D_1,i_{\ast}D_1\wedge \Sigma^{-\mathfrak{g}_2}G'_+]_{G^2\mathcal{S}U}\cong[j_{\ast}i_{\ast}D_1,j_{\ast}(i_{\ast}D_1\wedge \Sigma^{-\mathfrak{g}_2}G'_+)]_{G^2\mathcal{S}U'}.
\end{equation}
Taking smash product of $j_{\ast}i_{\ast}D_1$ with $(\ref{t})$ gives a map
$$
j_{\ast}i_{\ast}D_1\wedge t:j_{\ast}i_{\ast}D_1=j_{\ast}i_{\ast}D_1\wedge\SSS \to j_{\ast}i_{\ast}D_1\wedge \Sigma^{-\mathfrak{g}_2}G'_+.
$$
Using the bijection (\ref{changeofud}), this corresponds to a map
$$
i_{\ast}D_1\to i_{\ast}D_1\wedge \Sigma^{-\mathfrak{g}_2}G'_+. 
$$
The map $\tau$ in $(\ref{tau})$ is the $(G\times 1)$-orbit of it under the canonical identifications
\begin{equation}\label{idtr1} i_{\ast}\epsilon^{\ast}(D/G) \cong (i_{\ast}D_1)/(G\times 1)\end{equation} and 
\begin{equation}\label{idtr2} \Sigma^{-\mathfrak{g}}i_{\ast}D \cong (i_{\ast}D_1\wedge \Sigma^{-\mathfrak{g}_2}G'_+)/(G\times 1).\end{equation}

\section{Tate spectrum and the duality map}

\subsection{Tate spectrum} 

As mentioned in the introduction, Greenlees and Sadofsky proved that the Morava $K$-theory of the classifying space of a finite group is self dual \cite{Greenlees}. A main ingredient in their proof is the contractibility of the Tate spectrum of $K(n)$. In this section, we first recall the definition of Tate spectrum and their results. Secondly we will define our duality map (\ref{egmge}) in theorem \ref{mainquotient} and relate it to another Tate spectrum. For a general reference on Tate spectrum, see \cite{Tate}.

Let $G$ be a compact Lie group and $U$ be a complete $G$-universe. We will primarily work in three different categories of spectra, namely $G\mathcal{S}U$, $G\mathcal{S}U^G$ and $\mathcal{S}U^G$. Consider the $n$-th Morava $K$-theory spectrum. It is a non-equivariant spectrum, but can also be regarded as a naive $G$-spectrum with trivial $G$-action. We use the same notation $K(n)$ to represent it in both $\mathcal{S}U^G$ and $G\mathcal{S}U^G$. The inclusion $i:U^G\to U$ induces the change of universe functor $i_{\ast}$ and $i_{\ast}K(n)\in G\mathcal{S}U$ is a $G$-spectrum. 

Consider the cofiber sequence of pointed spaces
\begin{equation}\label{EGcofiber}
EG_+\to\SSS^0\to \widetilde{EG}.  
\end{equation}
The first map collapses $EG$ to the non-basepoint of $\SSS^0$. The cofiber $\widetilde{EG}$ is homotopy equivalent to the unreduced suspension of $EG$. Let $X\in G\mathcal{S}U$ be a $G$-spectrum. The collapse map $EG_+\to\SSS^0$ induces a map
\begin{equation}\label{Fcollapse}
X \simeq F(\SSS^0,X)\to F(EG_+,X).
\end{equation}

The smash product of (\ref{EGcofiber}) and (\ref{Fcollapse}) gives the following commutative diagram in $G\mathcal{S}U$
\begin{equation}\label{tatediagram}
\begin{aligned}
\xymatrix{
EG_+\wedge X \ar[d]\ar[r] & X\ar[d]\ar[r] &\widetilde{EG}\wedge X\ar[d]\\
EG_+\wedge F(EG_+,X) \ar[r]& F(EG_+,X) \ar[r] & \widetilde{EG}\wedge F(EG_+,X).
}
\end{aligned}
\end{equation}

Both rows of (\ref{tatediagram}) are cofiber sequences. The Tate spectrum of $X$ is defined to be the last term 
$t_G(X)=\widetilde{EG}\wedge F(EG_+,X)$. Since $EG$ is contractible, the left vertical map, being the smash product of $EG_+$ with the non-equivariant equivalence $G$-map (\ref{Fcollapse}), is a $G$-equivalence. Hence, $t_G(X)$ is the cofiber of the composite
\begin{equation}\label{coll}
EG_+\wedge X\to X \to F(EG_+,X).
\end{equation}
Let $\mathfrak{g}$ be the adjoint representation of $G$ and $M$ be a closed $G$-manifold. Define a map $\aaa_{G,M}$ 
$$\aaa_{G,M}:EG_+\wedge \Sigma^{-\mathfrak{g}}\Sigma^{\infty}M_+\wedge i_{\ast}K(n)\to F(EG_+,\Sigma^{-\mathfrak{g}}\Sigma^{\infty}M_+\wedge i_{\ast}K(n))$$
by taking $X=\Sigma^{-\mathfrak{g}}\Sigma^{\infty}M_+\wedge i_{\ast}K(n)$ in (\ref{coll}). By the $G$-freeness of $EG$, theorem \ref{LMStransfer} and Spanier-Whitehead duality,
\[
\tilde{\tau}:(EG\times_G M)_+ \wedge K (n) \simeq (EG_+ \wedge \Sigma^{-\mathfrak{g}}\Sigma^{\infty}M_+\wedge i_{\ast} K (n))^G 
\]
and
\begin{align*}
 F (EG_+, \Sigma^{-\mathfrak{g}}\Sigma^{\infty}M_+\wedge i_{\ast} K (n))^G 
  &\simeq F (EG_+\wedge\Sigma^{\mathfrak{g}}M^{-TM},  i_{\ast}K(n))^G\\
  &\simeq F ((EG_+\wedge\Sigma^{\mathfrak{g}}M^{-TM})/G, K(n))\\
  &\simeq F ((EG\times_G M)^{-TM+\mathfrak{g}}, K(n))
\end{align*} 
are equivalences in $\mathcal{S}U^G$. By taking $G$-fixed point of $\alpha_{G,M}$, we get a map
\begin{equation}\label{alphagmg}
 \alpha_{G,M}^G:(EG\times_G M)_+ \wedge K (n) \to F ((EG\times_G M)^{-TM+\mathfrak{g}}, K(n)) 
\end{equation}
with cofiber $t_G(\Sigma^{-\mathfrak{g}}\Sigma^{\infty}M_+\wedge i_{\ast} K (n))^G$.

As we will show in proposition \ref{alphalambda}, (\ref{alphagmg}) is equal to the smash product of $K(n)$ with the map (\ref{egmge}) in theorem \ref{mainquotient}. Therefore, theorem \ref{mainquotient}(b) is equivalent to the contractibility of $t_G(\Sigma^{-\mathfrak{g}}\Sigma^{\infty}M_+\wedge i_{\ast} K (n))^G$. Making use of the complex orientability of Morava $K$-theory and the result of Ravenel that $K(n)^{\ast}(BG)$ has finite rank \cite{Ravenel}, Greenlees and Sadofsky proved the contractibility of this Tate spectrum in the case $M$ is a point and $G$ is finite.

\begin{theorem}\label{thmGreenleestate}(\cite{Greenlees})
For a finite group $G$, $t_G(i_{\ast}K(n))\simeq \ast$.
\end{theorem}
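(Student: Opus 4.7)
The plan is to prove contractibility inductively on $|G|$, after first reducing to the case of a finite $p$-group. Taking $G$-fixed points in the defining cofiber sequence, the statement that $t_G(i_{\ast}K(n))$ is contractible is equivalent to the norm map $(EG_+ \wedge i_{\ast}K(n))^G \to F(EG_+, i_{\ast}K(n))^G$ being a weak equivalence, i.e.\ a certain comparison between $K(n)_{\ast}(BG)$ and $K(n)^{\ast}(BG)$.

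First I would perform a Sylow transfer reduction. If $P \subset G$ is a Sylow $p$-subgroup, the composite $\tmop{tr}^G_P \circ \tmop{res}^G_P$ acts as multiplication by $[G:P]$, which is coprime to $p$ and therefore a unit in $K(n)_{\ast}$. Naturality of the transfer with respect to the construction $X \mapsto t_G(X)$ then exhibits $t_G(i_{\ast}K(n))$ as a retract of $t_P(i_{\ast}K(n))$, so it suffices to treat the case where $G$ is a finite $p$-group.

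For the base case $G = C_p$, I would argue directly from complex orientability. Since $K(n)$ has characteristic $p$ and its formal group law satisfies $[p](x) = v_n x^{p^n}$, one has $K(n)^{\ast}(BC_p) \cong K(n)^{\ast}[[x]]/(v_n x^{p^n})$, a free $K(n)^{\ast}$-module of rank $p^n$, and dually $K(n)_{\ast}(BC_p)$ is its $K(n)^{\ast}$-linear dual. Using the Atiyah--Hirzebruch-style spectral sequence from the filtration of $EC_p$ converging to $\pi_{\ast} t_{C_p}(i_{\ast} K(n))^{C_p}$, one checks the norm map on the $E_2$-page is an isomorphism (essentially because local cohomology of the ring $K(n)^{\ast}[[x]]/(v_n x^{p^n})$ at the augmentation ideal vanishes in the relevant range), so $t_{C_p}(i_{\ast}K(n))$ is contractible.

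For the inductive step, for a $p$-group $G$ of order larger than $p$, I would choose a central $C \cong C_p \subset Z(G)$ and use the central extension $1 \to C \to G \to G/C \to 1$. Via the fibration $BC \to BG \to B(G/C)$ one obtains a filtration realizing $t_G(-)$ as an iterated Tate construction $t_{G/C}(t_C(-))$ applied to $i_{\ast}K(n)$. By the inductive hypothesis both $t_C(i_{\ast}K(n))$ and $t_{G/C}(-)$ applied to a contractible spectrum are contractible, so $t_G(i_{\ast}K(n))$ is contractible as well.

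The hard part will be making the iterated Tate identification rigorous, since one must verify that $G/C$ acts compatibly on the spectrum $t_C(i_{\ast}K(n))$ (as a module over the Borel construction) and that the ensuing spectral sequence converges. The essential input to control convergence is Ravenel's theorem that $K(n)^{\ast}(BG)$ has finite rank over $K(n)^{\ast}$, which prevents pathological inverse-limit behavior. An alternative, equivalent approach, which is the one used by Greenlees--Sadofsky, is to invoke the Greenlees--May local cohomology spectral sequence, converging from local cohomology of $K(n)^{\ast}(BG)$ at its augmentation ideal to $\pi_{\ast} t_G(i_{\ast}K(n))^G$; Ravenel's finiteness then forces this local cohomology, and hence the Tate spectrum, to vanish.
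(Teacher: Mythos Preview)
The paper does not give its own proof of this statement: Theorem~\ref{thmGreenleestate} is simply quoted from Greenlees--Sadofsky \cite{Greenlees}, with the one-sentence gloss that the argument uses complex orientability of $K(n)$ together with Ravenel's finiteness of $K(n)^{\ast}(BG)$. There is therefore nothing in the paper to compare your proposal against.

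Your sketch is a reasonable outline of an argument in the spirit of \cite{Greenlees}, but a couple of points deserve tightening. First, the Sylow retract statement should be made at the level of fixed-point spectra: $t_G(i_{\ast}K(n))$ and $t_P(i_{\ast}K(n))$ live in different equivariant categories, so what one actually shows is that $t_G(i_{\ast}K(n))^G$ is a retract of $t_P(i_{\ast}K(n))^P$ via restriction and transfer. Second, your inductive step via an iterated Tate construction along a central extension can be made to work, but the identification $t_G(-)\simeq t_{G/C}(t_C(-))$ is not an equality on the nose; one needs the appropriate filtration/spectral sequence and a convergence argument, as you acknowledge. The route actually taken in \cite{Greenlees} is somewhat more direct for $p$-groups: one models $\widetilde{EG}$ as a colimit of representation spheres $S^{kV}$ for a faithful complex representation $V$, so that $\pi_{\ast}t_G(i_{\ast}K(n))^G$ is obtained from $K(n)^{\ast}(BG)$ by inverting the Euler class of $V$; since $K(n)^{\ast}(BG)$ is finite over $K(n)^{\ast}$ (Ravenel) and the Euler class lies in the augmentation ideal, it is nilpotent, and the localization vanishes. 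Your closing remark about a local cohomology spectral sequence is in the right spirit but is not the precise mechanism used in \cite{Greenlees}.
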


The contractibility of the cofiber $t_G(i_{\ast}K(n))^G$ implies that     
\begin{equation} 
  \alpha_{G,pt}^G:BG_+ \wedge K (n) \simeq F (BG_+, K (n))\label{BGKFBG}
\end{equation}
is an equivalence and hence gives the self $K(n)$-duality of $BG$ by taking homotopy groups.


\subsection{A \textit{K}(\textit{n})-duality map for equivariant manifolds} \label{subsec:ourduality}
As mentioned in the introduction, Strickland described the construction of (\ref{BGKFBG}) in a slightly different way \cite{Strickland}. Let us recall his construction. Suppose $G$ is a finite group. Let $EG^2$ and $G'$, a diffeomorphic copy of $G$, be $G^2$-space with action $(g_1, g_2)(x_1,x_2)=(g_1x, g_2y)$ and $(g_1, g_2) g' = g_1 g' g^{- 1}_2$ respectively. The collapse map of $G'$ to a point induces a covering map $E G^2 \times_{G^2} G' \rightarrow E G^2 \times_{G^2} pt $. Since $E G^2 \times_{G\times 1}G'$ is contractible with a free $G^2/(G\times 1)$-action, $E G^2 \times_{G^2} G'$ is homotopically the classifying space $BG$. The covering map can be regarded as a homotopy version of the diagonal map $BG\to BG^2$.
Let $U'$ be a complete $G^2$-universe and $i':(U')^{G^2}\to U'$ be the inclusion. Pick a $G^2$-embedding $\iota_{G'}:G'\hookrightarrow W$ of $G'$ into a $G^2$-representation $W$. The normal bundle of $\iota_{G'}$ is $G'\times W$. Applying Pontryagin-Thom construction to $\iota_{G'}$ gives us a map $\SSS^W\to \Sigma^WG'_+$, whose desuspension by $W$ is a morphism $t:\SSS\to\Sigma^{\infty}G'_+$ in $G^2\mathcal{S}U'$. This morphism $t$ is the special case of (\ref{t}) for finite $G$. By the $G^2$-freeness of $EG^2$ and theorem \ref{isochangeofu}, there exists a morphism $\beta:\Sigma^{\infty}EG^2_+\to\Sigma^{\infty}EG^2_+\wedge G'_+$ in $G^2\mathcal{S}(U')^{G^2}$ such that $i'_{\ast}\beta=EG^2_+\wedge t$. The $G^2$-orbit $\beta/G^2$ is the transfer map $$\Sigma^{\infty} (B G \times B G)_+ \to \Sigma^{\infty}EG^2_+\wedge_{G^2} G'_+ \simeq \Sigma^{\infty} B G_+$$ of (\ref{transfer}) in the introduction. Composing this transfer map with the collapse map $\Sigma^{\infty} B G_+ \rightarrow \mathbb{S}$ and taking adjoint give us (\ref{bgd}). As pointed out by Strickland in \cite{Strickland}, the smash product of $K(n)$ with (\ref{bgd}) is the map $\alpha_{G,pt}^G$ of (\ref{BGKFBG}).
 
We now define the map (\ref{egmge}) in theorem \ref{mainquotient}. Its construction combines those of the equivariant Spanier-Whitehead duality  (\ref{gsw}) and $K(n)$-duality map (\ref{bgd}) for $BG$.

Suppose $G$ is a compact Lie group and $U'$ is a complete $G^2$-universe. Then $U=(U')^{G\times 1}$ is a complete $G$-universe. Denote by $i':(U')^{G^2}=U^G\to U'$ and $i:U^G\to U$ the inclusions of trivial universe. Let $G'$ be a copy of $G$ with $G^2$-action $(g_1,g_2)g'=g_1g'g_2^{-1}$. For any $G$-space or $G$-spectrum $X$, let $X_i,i=1,2$, be the $G^2$-space or $G^2$-spectrum $\pi_i^{\ast}X$, where $\pi_1,\pi_2:G^2\to G$ be the projection to the first and second factor respectively. 

Let $M$ be a closed $G$-manifold. Define a $G^2$-equivariant map
\begin{equation}\label{deltagm}
\Delta_{G,M}:G'\times M_2\to M_1\times M_2
\end{equation}
by $\Delta_{G,M}(g,x)=(gx,x)$. Pick
\begin{enumerate}[(a)]
\item a finite dimensional $G^2$-representation $W$;
\item a $G$-vector bundle $Q\to M$ such that $Q\oplus TM=M\times V$ for some $G$-representation $V$; and
\item a $G^2$-embedding 
\begin{equation}\label{iota}
 \iota_{G,M}:G'\times M_2\to M_1\times Q_2 \times W \times \mathfrak{g}_2
\end{equation}
over $\Delta_{G,M}$.
\end{enumerate}
For instance, if $\iota_{G'}:G'\to W$ is an $G^2$-embedding as in (\ref{embedG'}), we can take $\iota_{G,M}$ to be the map defined by $\iota_{G,M}(g',x)=(g'x,(x,0),\iota_{G'}(g'),0)$. The stable normal bundle of $\iota_{G,M}$ is  $ \Delta_{G,M}^{\ast}(TM_1\times Q_2\times W \times \mathfrak{g}_2)-(TG'\times TM_2)\cong(G'\times M_2)\times V_2\times W$. Applying Pontryagin-Thom construction to $\iota_{G,M}$ followed by $\Sigma^{-V_2- W}\Sigma^{\infty}$ gives a morphism 
\[
(\Delta_{G,M})^!:\Sigma^{\infty}{M_1}_+\wedge M^{-TM+\mathfrak{g}}_2\to\Sigma^{\infty}(G'\times M_2)_+
\]
in $G^2\mathcal{S}U'$. Since the domain of the smash product $EG^2_+\wedge (\Delta_{G,M})^!$ is $G^2$-free, by theorem \ref{isochangeofu}, $EG^2_+\wedge (\Delta_{G,M})^!$ is the image of a morphism 
\begin{equation}\label{beta}
\begin{split}
\beta_{G,M}:&\Sigma^{\infty}(EG_1\times M_1)_+\wedge (EG_2\times M_2)^{-TM_2+\mathfrak{g}}\\
           &\to \Sigma^{\infty}(EG^2\times(G'\times M_2))_+
\end{split}
\end{equation}
in $G^2\mathcal{S}(U')^{G^2}$ under the change of universe functor $i'_{\ast}:G^2\mathcal{S}(U')^{G^2}\to G^2\mathcal{S}U'$. On passage to the $G^2$-orbit of $\beta_{G,M}$ we obtain
\begin{equation}\label{betaovergg}
\begin{split}
\beta_{G,M}/G^2:&\Sigma^{\infty}(EG\times_G M)_+\wedge (EG\times_G M)^{-TM+\mathfrak{g}}\\
&\to \Sigma^{\infty}(EG^2\times_{G^2}(G'\times M_2))_+
\end{split}
\end{equation}
in the category $\mathcal{S}(U')^{G^2}$ of non-equivariant spectra. Composing (\ref{betaovergg}) with the collapse map \begin{equation}\label{collapse}
c_{G,M}:\Sigma^{\infty}(EG^2\times_G(G'\times M_2))_+\to\Sigma^{\infty}pt_+=\SSS
\end{equation} and taking adjoint, we finally get the map
\begin{equation*}
\lambda_{G,M}:\Sigma^{\infty}(EG\times_G M)_+\to F((EG\times_G M)^{-TM+\mathfrak{g}},\SSS),
\end{equation*}
which is defined to be the map (\ref{egmge}) in theorem \ref{mainquotient}.

The following proposition relates the map $\alpha_{G,M}$ of (\ref{alphagmg}) with our duality map $\lambda_{G,M}$ of (\ref{egmge}). The special case $M$ is a point and $G$ is finite was observed by Strickland \cite{Strickland}. 
\begin{proposition}\label{alphalambda}
$\alpha_{G,M}^G=\lambda_{G,M}\wedge K(n)$.
\end{proposition}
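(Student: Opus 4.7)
The plan is to pass to adjoints on both sides and prove equality of the resulting maps
\[
(EG \times_G M)_+ \wedge (EG \times_G M)^{-TM+\mathfrak{g}} \wedge K(n) \longrightarrow K(n).
\]
For $\lambda_{G,M} \wedge K(n)$ this adjoint is by construction $(c_{G,M} \circ (\beta_{G,M}/G^2)) \wedge 1_{K(n)}$, obtained from the $G^2$-equivariant Pontryagin-Thom map $(\Delta_{G,M})^!$ on the embedding $\iota_{G,M}$ of (\ref{iota}), smashed with $EG^2_+$ and descended via $G^2$-orbits through the change-of-universe identifications furnished by Theorem~\ref{isochangeofu}.

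For $\alpha_{G,M}^G$, I would compute the adjoint by unwinding its three constituents: (i) the Adams transfer $\tilde{\tau}$ of Theorem~\ref{LMStransfer} on the source, (ii) the equivariant Spanier-Whitehead duality (Theorem~\ref{thm:GSW}) together with the $G$-free adjunction $F(A, i_{\ast} K(n))^G \simeq F(A/G, K(n))$ on the target, and (iii) the basic double-collapse map $(EG \times EG)_+ \wedge X \to X$ that underlies $\alpha_{G,M}$ itself. Crucially, $\tilde\tau$ is built from the pre-transfer $t$ of (\ref{t}), which is Pontryagin-Thom on $\iota_{G'}: G' \hookrightarrow W$, while the Spanier-Whitehead identification is built from $\mu$ of (\ref{Mdiagpt}), which is Pontryagin-Thom on the diagonal-and-section embedding $M \hookrightarrow M \times Q$. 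Thus the adjoint of $\alpha_{G,M}^G$ should itself appear as a $G^2$-orbit (after appropriate change of universe) of the smash of these two Pontryagin-Thom constructions with $EG^2_+$, followed by the collapse to $K(n)$.

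The key step will be to recognize that these two Pontryagin-Thom constructions assemble into the single one $(\Delta_{G,M})^!$ on $\iota_{G,M}$. By the explicit formula $\iota_{G,M}(g', x) = (g'x, (x,0), \iota_{G'}(g'), 0)$, the embedding $\iota_{G,M}$ decomposes as a product of the pre-transfer embedding $\iota_{G'}$ (occupying the $W$-coordinate) and the $G$-twisted Spanier-Whitehead embedding $(g', x) \mapsto (g'x, (x, 0))$ (occupying the $M_1 \times Q_2$-coordinates). The $\mathfrak{g}_2$-summand in the codomain precisely cancels $TG' \cong \mathfrak{g}_2$ in the normal bundle computation, accounting for the $\Sigma^{-\mathfrak{g}}$ suspension in the definition of $X$. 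By the multiplicativity of Pontryagin-Thom on products of embeddings, $(\Delta_{G,M})^!$ factors as the smash of the two pieces, and this factorization matches the adjoint of $\alpha_{G,M}^G$ term by term.

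The hard part will be the careful bookkeeping of change-of-universe functors $i_{\ast}, j_{\ast}$, and the verification that the $(G \times 1)$-orbit used in the construction of $\tilde\tau$ combines with the remaining $(1 \times G)$-orbit absorbed by the Spanier-Whitehead identification to yield the full $G^2$-orbit defining $\beta_{G,M}/G^2$. This is ultimately a diagram chase, formal once one grants the product decomposition of the Pontryagin-Thom construction described above; the special case $M = pt$ with $G$ finite observed by Strickland should serve as a useful guide.
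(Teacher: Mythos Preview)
Your plan is correct and matches the paper's proof essentially step for step: the paper also passes to adjoints, unwinds $\alpha_{G,M}^G$ into the composite $\tau$-then-$\mu$-then-collapse (their display (\ref{unravel1})), lifts via change-of-universe to a $G^2$-equivariant composite built from the pre-transfer $t$ and a twisted $\mu'$ (their (\ref{unravel2})), and then observes that these two Pontryagin--Thom maps splice into the single Pontryagin--Thom map on the composite embedding $\iota_{G,M}$, recovering $(\Delta_{G,M})^!$ and hence $\beta_{G,M}$. The bookkeeping you flag as ``hard'' is exactly what the paper spends most of its proof on, carried out via the identifications (\ref{idtr1}), (\ref{idtr2}) and the introduction of the auxiliary map $\Delta_M':(g',x)\mapsto(g'x,g',x)$ to make the $(G\times 1)$-orbit structure explicit.
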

\begin{proof}
By the definition of $\alpha_{G,M}$, the adjoint 
\[
\widetilde{\alpha_{G,M}^G}:(EG\times_G M)_+ \wedge K(n) \wedge (EG\times_G M)^{-TM+\mathfrak{g}}\to K(n),
\]
of $\alpha_{G,M}^G$ is the $G$-orbit of a naive $G$-spectrum morphism 
\[
\epsilon^{\ast}(EG\times_G M)_+ \wedge K(n) \wedge (EG\times M)^{-TM+\mathfrak{g}}\to K(n),
\]
whose image under the change of universe functor $i_{\ast}:G\mathcal{S}U^G\to G\mathcal{S}U$ is the composite
\begin{align}
&i_{\ast}\epsilon^{\ast}((EG\times_G M)_+ \wedge K(n)) \wedge EG_+\wedge M^{-TM}\wedge\SSS^{\mathfrak{g}}\nonumber\\
\xrightarrow[\hspace{1cm}]{\tau\wedge 1}{}&\Sigma^{-\mathfrak{g}}i_{\ast}(EG_+\wedge M_+ \wedge K(n))\wedge EG_+\wedge M^{-TM}\wedge\SSS^{\mathfrak{g}}\nonumber\\
\xrightarrow[\hspace{1cm}]{\cong}{}& EG_+\wedge \Sigma^{\infty} M_+ \wedge M^{-TM}\wedge EG_+\wedge i_{\ast}K(n)\nonumber\\
\xrightarrow[\hspace{1cm}]{1\wedge\mu\wedge 1}{}&EG_+ \wedge \Sigma^{\infty} M_+ \wedge EG_+\wedge i_{\ast}K(n)\nonumber\\
\xrightarrow[\hspace{1cm}]{c\wedge 1}{}& i_{\ast}K(n).\label{unravel1}
\end{align}
Here $\tau$ is the map (\ref{tau}) for the case $D=EG_+\wedge M_+ \wedge K(n)$, $\mu$ is the map (\ref{Mdiagpt}) and $c$ is the collapse map to a point. Recall from the construction of (\ref{tau}) that to define $\tau$, we identify
\[
i_{\ast}\epsilon^{\ast}((EG\times_G M)_+ \wedge K(n))\cong (i_{\ast}({EG_1}_+\wedge {M_1}_+\wedge K(n)))/(G\times 1),
\]
\[
\Sigma^{-\mathfrak{g}}i_{\ast}(EG_+\wedge M_+ \wedge K(n))\cong (i_{\ast}({EG_1}_+\wedge {M_1}_+\wedge K(n))\wedge\Sigma^{-\mathfrak{g}_2}G'_+)/(G\times 1)
\]
as in (\ref{idtr1}) and (\ref{idtr2}). Similarly, we have the following identifications
\[
EG_+\wedge \Sigma^{\infty} M_+ \wedge M^{-TM} \cong ({EG_1}_+\wedge \Sigma^{\infty} {M_1}_+\wedge G'_+\wedge M^{-TM}_2)/(G\times 1),
\]
\[
EG_+\wedge \Sigma^{\infty} M_+ \cong ({EG_1}_+\wedge G'_+\wedge \Sigma^{\infty} M_{2+})/(G\times 1)
\]
of spectra in $G\mathcal{S}U=G^2/(G\times 1)\mathcal{S}U'^{G\times 1}$.

The map $\mu:\Sigma^{\infty} M_+ \wedge M^{-TM}\to \Sigma^{\infty} M_+$ is defined using the Pontryagin Thom construction of the diagonal map $\Delta_M:M\to M\times M$. Let $$\Delta_M':G'\times M_2\to M_1\times G'\times M_2$$ be the $G^2$-map given by $\Delta_M'(g',x)=(g'x,g',x)$. Note that $EG_1\times \Delta_M'$ is a map between $G^2$-free spaces with $(G\times 1)$-orbit $(EG_1\times \Delta_M')/(G\times 1)=EG\times \Delta_M.$ Hence, Pontryagin-Thom construction of $\Delta_M'$ and desuspension induce a morphism
$$\mu':\Sigma^{\infty} {M_1}_+ \wedge G'_+ \wedge M^{-TM}_2\to G'_+\wedge\Sigma^{\infty}{M_2}_+$$ 
in $G^2\mathcal{S}U$ such that $({EG_1}_+\wedge\mu')/(G\times 1)=EG_+\wedge\mu$. This, together with the construction of $\tau$ given in section \ref{sec:ESHT}, imply that the composite (\ref{unravel1}) is the $(G\times 1)$-orbit of a morphism 
\[
i_{\ast}({EG_1}_+\wedge {M_1}_+\wedge K(n)) \wedge {EG_2}_+\wedge M^{-TM}_2\wedge\SSS^{\mathfrak{g}_2}\to i_{\ast}K(n)
\]
in $G^2\mathcal{S}U$, whose image under $j_{\ast}:G^2\mathcal{S}U\to G^2\mathcal{S}U'$ is the composite
\begin{align}
&j_{\ast}i_{\ast}({EG_1}_+\wedge {M_1}_+\wedge K(n)) \wedge {EG_2}_+\wedge M^{-TM}_2\wedge\SSS^{\mathfrak{g}_2}\nonumber\\
\xrightarrow[\hspace{1.3cm}]{1\wedge t \wedge 1}{}&j_{\ast}i_{\ast}({EG_1}_+\wedge {M_1}_+\wedge K(n))\wedge\Sigma^{-\mathfrak{g}_2}G'_+ \wedge {EG_2}_+\wedge M^{-TM}_2\wedge\SSS^{\mathfrak{g}_2}\nonumber\\
\xrightarrow[\hspace{1.3cm}]{\cong}{}& {EG_1}_+\wedge \Sigma^{\infty} {M_1}_+ \wedge G'_+ \wedge M^{-TM}_2\wedge {EG_2}_+ \wedge j_{\ast}i_{\ast}K(n)\nonumber\\
\xrightarrow[\hspace{1.3cm}]{1\wedge j_{\ast}\mu' \wedge 1}{}&{EG_1}_+ \wedge G'_+ \wedge \Sigma^{\infty} {M_2}_+ \wedge {EG_2}_+ \wedge j_{\ast}i_{\ast}K(n)\nonumber\\ 
\xrightarrow[\hspace{1.3cm}]{c\wedge 1}{}& j_{\ast}i_{\ast}K(n).\label{unravel2}
\end{align}
Here $t$ is the pre-transfer map (\ref{t}). It is clear that the composite $(1\wedge j_{\ast}\mu' \wedge 1)\circ(1\wedge t \wedge 1)$ of the first three maps in (\ref{unravel2}) is the smash product of ${EG_1}_+\wedge{EG_2}_+\wedge j_{\ast}i_{\ast}K(n)$ with a morphism $\Sigma^{\infty}{M_1}_+\wedge M_2^{-TM}\wedge\SSS^{\mathfrak{g}}\to\Sigma^{\infty}(G'\times M_2)_+$ obtained by applying Pontryagin-Thom construction and then desuspension $\Sigma^{-V_2-W}$ to the composite embedding

\[G'\times M_2\xrightarrow{\Delta'} M_1\times G'\times M_2 \xrightarrow{1 \times \iota_{G'}\times s} M_1\times W \times Q_2\times\mathfrak{g}_2.\] 

The factor $\iota_{G'}$ and $s:M_2\to Q_2\times \mathfrak{g}_2$ in the last map is (\ref{embedG'}) and the zero section respectively. Note that this composite embedding is the map $\iota_{G,M}$ of (\ref{iota}) defined in the construction of $\lambda_{G,M}$. Hence, the composite $(1\wedge \mu' \wedge 1)\circ(1\wedge t \wedge 1)$ in (\ref{unravel2}) is equal to $$EG^2_+\wedge(\Delta_{G,M})^!\wedge j_{\ast}i_{\ast}K(n)=j_{\ast}i_{\ast}\beta_{G,M}\wedge j_{\ast}i_{\ast}K(n)$$ 
by the definition of $\beta_{G,M}$ in (\ref{beta}). By theorem \ref{isochangeofu}, it follows that the composite $(1\wedge \mu \wedge 1)\circ(\tau \wedge 1)$ of the first three maps in (\ref{unravel1}) is $(i_{\ast}\beta_{G,M}/(G\times 1))\wedge i_{\ast}K(n)$ and $\widetilde{\alpha_{G,M}^G}=(c_{G,M}\circ(\beta_{G,M}/G^2))\wedge K(n)$, where $c_{G,M}$ is the collapse map (\ref{collapse}). This shows $\alpha_{G,M}^G=\lambda_{G,M}\wedge K(n)$. 
\end{proof}

\section{Proof of main theorem}

In this section we will prove theorem \ref{mainquotient} about the $K(n)$-duality of $G$-manifolds. We will first show it for $G$-manifolds of the form $G/H$, $H$ being a finite subgroup of $G$. The main idea of the proof in these special cases is to reduce the $K(n)$-duality of $EG\times_G (G/H)$ to that of $BH$ by relating the map $\lambda_{G,G/H}$ with $\lambda_{H,pt}$, which is known to be a $K(n)$-equivalence. The $K(n)$-duality for more general $G$-manifolds follows from the special cases by induction on cells. 
  
Let $G$ be a compact Lie group. To simplify notations, we will write $X_{hG}$ to denote the homotopy orbit $EG\times_GX$ of a $G$-space $X$. Consider a $G$-manifold $G/H$, where $H$ is a finite subgroup of $G$. Note that $G/H$ has trivial tangent bundle $G/H\times \mathfrak{g}$. Hence, the virtual bundle $\mathfrak{g}-TM$ over $M$ is zero and we have
$$\alpha_{G,G/H}^G:\Sigma^{\infty}{(G/H)_{hG}}_+\wedge K(n) \to F({(G/H)_{hG}}_+,K(n)).$$ 
Homotopically $(G/H)_{hG}=EG\times_G (G/H)$ is the classifying space $BH$. Hence, to show that $\alpha_{G,G/H}^G$ induces the $K(n)$-duality of $(G/H)_{hG}$, we want to relate it to $\alpha_{H,pt}^H$, which is a weak equivalence by theorem \ref{thmGreenleestate}. By proposition \ref{alphalambda}, it suffices to compare $\lambda_{G,G/H}$ and $\lambda_{H,pt}$. To do this, we consider $G$ as a $(G\times H)$-manifold with action $(g,h)x=gxh^{-1}$. Then $G/(G\times 1)\cong pt$ as $H$-spaces and $G/(1\times H)\cong G/H$ as $G$-spaces. It allows us to relate the two maps using $\lambda_{G\times H,G}$. 


Consider $\Delta_{G\times H,G}$ and $\Delta_{G,G/H}$ as defined by (\ref{deltagm}) and the maps $\pi_{G'}\times \pi_{G/H}:G'\times H' \times G_2 \to G' \times (G/H)_2$ between their domains and $\pi_{G/H}^2:G_1\times G_2 \to (G/H)_1\times(G/H)_2$ between their codomains. Both $\pi_{G'}\times \pi_{G/H}$ and $\pi_{G/H}^2$ are equivariant with respect to $(G\times H)^2\to G^2$ and induce diffeomorphisms
\begin{equation}\label{GHGGGH}
(G'\times H'\times G_2)/(1\times H)^2 \cong G'\times (G/H)_2
\end{equation}
and
\begin{equation}\label{GGGHGH}
(G_1\times G_2)/(1\times H)^2 \cong  (G/H)_1\times (G/H)_2. 
\end{equation}
The two $((G\times H)^2/(1\times H))$-spaces can thus be identified with the corresponding $G^2$-spaces. Under these identifications, $\Delta_{G\times H,G}/(1\times H)^2$ is $\Delta_{G,G/H}$ and so $(EG^2\times\Delta_{G\times H,G})/(G\times H)^2$ becomes $EG^2\times_{G^2}\Delta_{G,G/H}=(\Delta_{G,G/H})_{hG^2}$. 

Similarly, The maps $\pi_{H'}:G'\times H' \times G_2 \to H'$ and $\pi_{pt}^2:G_1\times G_2 \to pt$ are equivariant with respect to $(G\times H)^2\to H^2$ and enable us to identify $\Delta_{G\times H,G}/(G\times 1)^2$ with $\Delta_{H,pt}$ and $(EH^2\times\Delta_{G\times H,G})/(G\times H)^2$ with $EH^2\times_{H^2}\Delta_{H,pt}=(\Delta_{H,pt})_{hH^2}$. 

Under these identifications, the equivariant maps discussed above induce the following commutative diagram
\begin{equation}\label{3coverings}
\begin{aligned}
\xymatrix{
(G'\times (G/H)_2)_{hG^2}\ar^-{\Delta_{G,G/H}}[rrr] &&& ((G/H)_{hG})^2\\
(G'\times H'\times G_2)_{h(G\times H)^2}\ar^-{\Delta_{G\times H,G}}[rrr] \ar_{\pi_{G'}\times\pi_{G/H}}[u]\ar^{\pi_{H'}}[d]&&& (G_{h(G\times H)})^2\ar_{\pi_{G/H}^2}[u]\ar^{\pi^2_{pt}}[d]\\
H'_{hH^2}\ar^-{\Delta_{H,pt}}[rrr]&&& pt_{hH^2}\\
}
\end{aligned}
\end{equation}
on homotopy orbits. Here, we use the same symbol to represent an equivariant map and its induced map the homotopy orbits.
Note that all the vertical maps are weak equivalences since they can be expressed as the $(G\times H)^2$-orbits of non-equivariant weak equivalences between $(G\times H)^2$-free spaces. For instance, $\pi_{G'}\times\pi_{G/H}$ is the map
\begin{align*}
&(EG^2\times EH^2)\times_{(G\times H)^2} (G'\times H'\times G_2)\\
\to{}& EG^2\times_{(G\times H)^2} (G'\times H'\times G_2)\\
={}& EG^2\times_{G^2} ((G'\times H'\times G_2)/H^2)\\
={}& EG^2\times_{G^2} (G'\times (G/H)_2)
\end{align*}
induced by the projection $EG^2\times EH^2\to EG^2$. Also, the three horizontal maps of diagram (\ref{3coverings}) are finite covering maps. The middle one can be regarded as the pullback of the top one along $\pi_{G/H}^2$ or the bottom one along $\pi_{pt}^2$. The maps $\beta_{G,G/H}/G^2,\beta_{G\times H,G}/(G\times H)^2$ and $\beta_{H,pt}/H^2$, which are stable maps in the reversed directions as defined by (\ref{beta}) and (\ref{betaovergg}), can be considered as the associated transfer maps of these three covering maps. They satisfy the following Mackey property.

\begin{proposition} \label{MackeyGHG}
The following diagram

\begin{equation}\label{3PT}
\begin{aligned}
\xymatrix{
\Sigma^{\infty}((G/H)_{hG})^2_+\ar^-{\beta_{G,G/H}/G^2}[rrr] &&& \Sigma^{\infty}{(G'\times (G/H)_2)_{hG^2}}_+\\
\Sigma^{\infty}(G_{h(G\times H)})^2_+\ar^-{\beta_{G\times H,G}/(G\times H)^2}[rrr] \ar_{\simeq}[u]\ar^{\simeq}[d]&&& \Sigma^{\infty}{(G'\times H'\times G)_{h(G\times H)^2}}_+\ar_{\simeq}[u]\ar^{\simeq}[d]\\
\Sigma^{\infty}(pt_{hH})^2_+\ar^-{\beta_{H,pt}/H^2}[rrr]&&& \Sigma^{\infty}{(H'\times pt)_{hH^2}}_+\\
}
\end{aligned}
\end{equation}
commutes.
\end{proposition}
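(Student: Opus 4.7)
The plan is to realize all three horizontal maps of diagram (\ref{3PT}) as coming from a single equivariant Pontryagin--Thom construction applied to $\Delta_{G\times H,G}$, from which the top and bottom rows are recovered by restricting the equivariance group. The vertical equivalences in (\ref{3coverings}) are all induced by the projection $EG^2\times EH^2\to EG^2$ or $EG^2\times EH^2\to EH^2$, i.e., by smashing with the collapse maps $EH^2_+\to pt_+$ or $EG^2_+\to pt_+$; the diagram should then commute because the PT construction is natural under both restriction of equivariance and under these collapse maps.

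Concretely, I would first choose a $G^2$-equivariant embedding $\iota_{G'}:G'\hookrightarrow W_G$ and an $H^2$-equivariant embedding $\iota_{H'}:H'\hookrightarrow W_H$, view both as $(G\times H)^2$-representations through the quotient maps $(G\times H)^2\twoheadrightarrow G^2$ and $(G\times H)^2\twoheadrightarrow H^2$, and combine them (together with the zero section for the $G_2$ factor) to form a $(G\times H)^2$-embedding $\iota_{G\times H,G}$ of the form appearing in (\ref{iota}). The corresponding PT collapse yields an equivariant map $(\Delta_{G\times H,G})^!$ in a suitable $(G\times H)^2$-universe $U''$ whose $(1\times H)^2$- and $(G\times 1)^2$-fixed-point sub-universes can be identified with the $G^2$- and $H^2$-universes used to build $\beta_{G,G/H}$ and $\beta_{H,pt}$. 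Compatibility with $\iota_{G,G/H}$ is then immediate: quotienting the domain of $\iota_{G\times H,G}$ by the freely acting $(1\times H)^2$ yields, via (\ref{GHGGGH}) and (\ref{GGGHGH}), precisely $\iota_{G,G/H}$, and the PT collapse commutes with this orbit operation. Smashing with $EG^2_+\wedge EH^2_+$ makes the action $(G\times H)^2$-free, so Theorem \ref{isochangeofu} applies uniformly to lift everything to the trivial-universe categories. The diagram (\ref{3PT}) is then obtained by taking further orbits by $(1\times H)^2$ (respectively $(G\times 1)^2$) on the middle row, using that $EG^2\times EH^2\to EG^2$ is a non-equivariant weak equivalence of $(G\times H)^2$-free spaces.

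The main obstacle is careful bookkeeping of the various universes and equivariance groups: each of the three $\beta$-maps is constructed via Theorem \ref{isochangeofu} in a different equivariant category, so to assemble them into a single commutative diagram I need to verify that this change-of-universe adjunction is natural under both restriction of the equivariance group (from $(G\times H)^2$ to $G^2$ or $H^2$) and the corresponding inclusion of fixed-point sub-universes. This compatibility follows from the explicit construction of the left adjoint $i_*$ in \cite{LMS}, but making it precise requires tracking a modest amount of notation.
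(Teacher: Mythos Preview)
Your overall strategy---realizing all three horizontal maps as arising from a single $(G\times H)^2$-equivariant Pontryagin--Thom construction on $\Delta_{G\times H,G}$ and then restricting the equivariance group---is precisely the paper's approach. However, there is a genuine gap in your implementation. A single combined embedding into $W_G\oplus W_H$ does \emph{not} quotient by $(1\times H)^2$ to yield an embedding of the form~(\ref{iota}) for $\iota_{G,G/H}$: the factor $W_H$ carries a nontrivial $(1\times H)^2$-action, so after passing to orbits the target becomes a (generally nontrivial) vector bundle over $(G/H)_1\times(G/H)_2$ rather than a product with a $G^2$-representation. Equivalently, your combined representation $W_G\oplus W_H$ is not contained in the sub-universe $U'=\overline{U}^{(1\times H)^2}$, so the Pontryagin--Thom map built from it does not live in $(G\times H)^2\mathcal{S}U'$, and there is no map at that level to which the $(1\times H)^2$-orbit functor can be applied before one is forced to smash with $EH^2_+$.

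The paper sidesteps this by exploiting the fact that $\Delta^!_{G\times H,G}$, and hence $\beta_{G\times H,G}$, is independent of the choice of $W$ and $\iota_{G\times H,G}$. It treats the two halves of the diagram with \emph{different} embeddings: for the top half it takes $W=W_G$ alone (trivial $(1\times H)^2$-action), so that the resulting Pontryagin--Thom map $\Delta^!_1$ already lives in $(G\times H)^2\mathcal{S}U'$ and one may form $\Delta^!_1/(1\times H)^2=\Delta^!_{G,G/H}$ directly. Smashing with only $EG^2_+$ (which suffices for $(G\times H)^2$-freeness since $G_1\times G_2$ is already $(1\times H)^2$-free) and descending to the trivial universe gives a map $\beta_1$ with $\beta_{G\times H,G}=EH^2_+\wedge\beta_1$ and $\beta_1/(1\times H)^2=\beta_{G,G/H}$, from which the top square follows. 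The bottom half is handled symmetrically with $W=W_H$. The point you are missing is not the change-of-universe naturality (which is indeed routine) but this deliberate use of two distinct embeddings, each tailored to land in the appropriate fixed sub-universe.
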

\begin{proof}
To define the maps in the diagram, we have to work in a complete $(G\times H)^2$-universe $\overline{U}$. The fixed point sets $U'=\overline{U}^{1\times H^2}$ is a complete $G^2$-universe and $U''=\overline{U}^{G^2\times 1}$ is a complete $H^2$-universe. Let $i',i'',\overline{i}$ be the inclusion of the trivial universe $U_0:=(U')^{G^2}=(U'')^{H^2}=\overline{U}^{(G\times H)^2}$ to $U',U''$ and $\overline{U}$ respectively. Also, let $j':U'\to\overline{U}$ and $j'':U''\to\overline{U}$ be inclusions to $\overline{U}$.

Recall that to define $\beta_{G\times H,G}$, one choose a $(G\times H)^2$-embedding $\iota_{G\times H,G}$ over $\Delta_{G\times H, G}$ as in (\ref{iota}). Since $TG=G\times \mathfrak{g}$, we can pick $Q=G\to G$ to be the zero bundle and $V=\mathfrak{g}$. Hence,  
$$\iota_{G\times H,G}:G'\times H'\times G_2\to G_1\times G_2 \times W \times \mathfrak{g}_2$$ for some $(G\times H)^2$-representation $W$.
Applying Pontryagin Thom construction to $\iota_{G\times H,G}$ followed by desuspension $\Sigma^{-W-\mathfrak{g}_2}$ yields a morphism 
$$\Delta_{G\times H,G}^!:\overline{i}_{\ast}\Sigma^{\infty} (G_1\times G_2)_+ \to \overline{i}_{\ast}\Sigma^{\infty} (G'\times H'\times G)_+$$ 
in $(G\times H)^2\mathcal{S}\overline{U}$. The map 
$$\beta_{G\times H,G}:\Sigma^{\infty}(EG^2\times EH^2\times G_1 \times G_2)_+\to \Sigma^{\infty}(EG^2\times EH^2\times G' \times H' \times G_2)_+$$
is defined to be a morphism in $(G\times H)^2\mathcal{S}U_0$ such that $\overline{i}_{\ast}\beta_{G\times H,G}=EG_+^2\wedge EH_+^2\wedge\Delta_{G\times H,G}^!$. Its existence and uniqueness up to homotopy are guaranteed by theorem \ref{isochangeofu} due to the $(G\times H)^2$-freeness of its domain. Note that $\Delta^!_{G\times H,G}$, and hence $\beta_{G\times H,G}$, are well-defined, independent of the choices of $W$ and $\iota_{G\times H,G}$. We will make use of this fact and pick different $W$ and $\iota_{G\times H,G}$ to relate $\beta_{G\times H,G}$ with $\beta_{G,G/H}$ and $\beta_{H,pt}$.

Let $W_G$ be a $G^2$-representation and $\iota_G:G'\to W_G$ be a $G^2$-embedding of $G'$. $W_G$ can be regarded as a $(G\times H)^2$-representation through the projection $(G\times H)^2\to G^2$. To define $\Delta^!_{G\times H,G}$, one can take $W=W_G$ and $\iota_{G\times H,G}=\iota_1$, where $\iota_1(g',h',x)=(g'x(h')^{-1},x,\iota_G(g'),0)$. Since $W_G$ and $\mathfrak{g}_2$ are fixed by $1\times H^2$, They are in the universe $U'$. Thus, applying Pontryagin-Thom construction to $\iota_1$ followed by desuspension $\Sigma^{-W_G-\mathfrak{g}_2}$ defines a morphism 
$$\Delta^!_1:i_{\ast}\Sigma^{\infty} (G_1\times G_2)_+ \to i_{\ast}\Sigma^{\infty} (G'\times H'\times G)_+$$ 
in $(G\times H)^2\mathcal{S}U'$ with $j'_{\ast}\Delta^!_1=\Delta_{G\times H, G}^!$. 

By theorem \ref{isochangeofu} and the $(G\times H)^2$-freeness of $EG^2\times G_1\times G_2$, there exists a unique homotopy class of morphisms 
$$\beta_1:\Sigma^{\infty}(EG^2\times G_1\times G_2)_+\to \Sigma^{\infty}(EG^2\times G'\times H' \times G_2)_+$$ 
in $(G\times H)^2\mathcal{S}U_0$ such that $i'_{\ast}\beta_1=EG^2_+\wedge \Delta^!_1$. Since 
\begin{align*}
\overline{i}_{\ast}(EH_+^2\wedge \beta_1)&=EH_+^2\wedge j'_{\ast}i'_{\ast}\beta_1\\
&=EH_+^2\wedge j'_{\ast}( EG^2_+\wedge \Delta^!_1)\\
&=EG^2_+\wedge EH^2_+ \wedge \Delta_{G\times H,G}^!, 
\end{align*}
we have $EH_+^2\wedge \beta_1=\beta_{G\times H,G}$.

Also, note that under the identifications (\ref{GHGGGH}) and (\ref{GGGHGH}), $\iota_1/(1\times H)^2$ is a $G^2$-embedding $G'\times (G/H)_2\to (G/H)_1\times (G/H)_2\times W_G \times \mathfrak{g}_2$ over $\Delta_{G,G/H}$. Hence, $\Delta^!_1/(1\times H)^2=\Delta_{G,G/H}^!$. It follows that 
$$i'_{\ast}(\beta_1/(1\times H)^2)=(EG^2_+\wedge \Delta^!_1)/(1\times H)^2=EG^2_+\wedge\Delta_{G,G/H}^!$$ and so by theorem \ref{isochangeofu}, $\beta_1/(1\times H)^2=\beta_{G,G/H}.$ 

The results above show that $\beta_{G\times H,G}/(G\times H)^2=(EH^2_+\wedge \beta_1)/(G\times H)^2$ and $\beta_{G,G/H}/G^2=\beta_1/(G\times H)^2$. This proves the commutativity of the upper half of the diagram in the proposition.

Similarly, suppose $\iota_H:H'\to W_H$ is a $H^2$-embedding of $H'$ into a $H^2$-representation $W_H$, which can be regarded as a $(G\times H)^2$-representation through $(G\times H)^2\to H^2$. We take $W=W_H$ and $\iota_{G\times H,G}=(\iota_2,0)$, where 
$$\iota_2:G'\times H'\times G_2\to G_1\times G_2 \times W_H$$ 
is given by $\iota_2(g',h',x)=(g'x(h')^{-1},x,\iota_H(h'))$. By applying Pontryagin-Thom construction to $\iota_2$ followed by desuspension $\Sigma^{-W_H}$, we can define morphisms $\Delta^!_2$ in $(G\times H)^2\mathcal{S}U''$ and $\beta_2$ in $(G\times H)^2\mathcal{S}U_0$ such that $j''_{\ast}\Delta^!_2=\Delta_{G\times H, G}^!$, $\Delta^!_2/(G\times 1)^2=\Delta_{H,pt}^!$ and $i''_{\ast}\beta_2=EH^2_+\wedge\Delta^!_2$. From these we can deduce that $\beta_{G\times H,G}/(G\times H)^2=(EG^2_+\wedge \beta_2)/(G\times H)^2$ and $\beta_{H,pt}/H^2=\beta_2/(G\times H)^2$. The commutativity of the lower half of the diagram in the proposition follows.
\end{proof}

Using proposition \ref{MackeyGHG}, we prove the following result about Tate spectrum.

\begin{proposition}\label{tatefixcollapse}
Let $G$ be a compact Lie group and $Y$ be a $G$-CW spectrum built from cells of the form $(G/H)_+\wedge \SSS^k$, where $H$ is a finite subgroup of $G$. Then the $G$-fixed point spectrum $t_G(\Sigma^{-\mathfrak{g}}Y\wedge i_{\ast} K (n))^G$ is contractible.
\end{proposition}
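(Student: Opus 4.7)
The plan is to reduce to the known finite-group case (Theorem \ref{thmGreenleestate}) by induction on the cells of $Y$. Observe first that the functor $Y \mapsto t_G(\Sigma^{-\mathfrak{g}} Y \wedge i_{\ast} K(n))^G$ preserves cofiber sequences in $Y$, since each of $\Sigma^{-\mathfrak{g}}(-) \wedge i_{\ast} K(n)$, $F(EG_+,-)$, $\widetilde{EG} \wedge (-)$, and $(-)^G$ is exact on spectra. Consequently, contractibility propagates along the CW filtration of $Y$, and (modulo a filtered hocolim argument for infinite CW complexes, accessible through the $K(n)$-module structure on the Tate spectrum) it suffices to treat a single cell. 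By stability in the sphere variable, we may assume $Y = \Sigma^{\infty}(G/H)_+$ for some finite subgroup $H \subset G$.

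In this base case, the discussion around (\ref{alphagmg}) identifies $t_G(\Sigma^{-\mathfrak{g}} \Sigma^{\infty}(G/H)_+ \wedge i_{\ast} K(n))^G$ with the cofiber of $\alpha_{G,G/H}^G$, and by Proposition \ref{alphalambda} this map coincides with $\lambda_{G,G/H} \wedge K(n)$. A Shapiro-type computation identifies the $G$-bundles $T(G/H) \cong G \times_H \mathfrak{g}$ and $(G/H) \times \mathfrak{g}$ (the latter carrying the diagonal $G$-action: left translation on $G/H$ and adjoint on $\mathfrak{g}$), so the virtual bundle $-TM + \mathfrak{g}$ is zero over $BH = (G/H)_{hG}$ and the Thom spectrum $(G/H)_{hG}^{-TM+\mathfrak{g}}$ simplifies to $\Sigma^{\infty} BH_+$. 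It therefore suffices to show that $\lambda_{G,G/H} \wedge K(n)$ is a weak equivalence.

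To this end, view $G$ as a $(G \times H)$-manifold via $(g,h) \cdot x = gxh^{-1}$, giving $G/(1 \times H) \cong G/H$ as $G$-spaces and $G/(G \times 1) \cong pt$ as $H$-spaces. Proposition \ref{MackeyGHG} then supplies a commutative diagram relating $\beta_{G,G/H}/G^2$, $\beta_{G \times H, G}/(G \times H)^2$, and $\beta_{H,pt}/H^2$ by vertical weak equivalences. The collapse maps to $\SSS$ commute with these vertical equivalences, each being induced by the unique map of the relevant space to a point. Post-composing (\ref{3PT}) with these collapses and taking adjoints yields a commutative diagram identifying $\lambda_{G,G/H}$ with $\lambda_{H,pt}$ under the equivalences $(G/H)_{hG} \simeq G_{h(G \times H)} \simeq BH$. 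Smashing with $K(n)$ and reapplying Proposition \ref{alphalambda}, the map $\lambda_{G,G/H} \wedge K(n)$ is identified with $\alpha_{H,pt}^H$, which is a weak equivalence by Theorem \ref{thmGreenleestate}. Its cofiber, namely the desired Tate fixed-point spectrum, is therefore contractible.

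The main obstacle is the diagram chase establishing $\lambda_{G,G/H} \simeq \lambda_{H,pt}$ in the base case: one must check the Shapiro-type triviality of $-TM + \mathfrak{g}$ over $BH$ and trace the collapse-and-adjoint construction across the three rows of Proposition \ref{MackeyGHG}, verifying naturality of the Thom-spectrum identifications throughout. A secondary technical point is the reduction from an infinite CW spectrum $Y$ to its finite subcomplexes, which is handled by exploiting the $K(n)$-module structure on the Tate fixed-point spectrum together with the compactness properties of $K(n)$-homology.
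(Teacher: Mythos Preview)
Your proposal is correct and follows essentially the same route as the paper: reduce by cellular induction to $Y=\Sigma^{\infty}(G/H)_+$, identify the Tate fixed-point spectrum with the cofiber of $\alpha_{G,G/H}^G=\lambda_{G,G/H}\wedge K(n)$, and then use Proposition~\ref{MackeyGHG} (post-composed with collapse maps and adjointed) to identify $\lambda_{G,G/H}$ with $\lambda_{H,pt}$, whence Theorem~\ref{thmGreenleestate} finishes. The paper's proof is terser but logically identical; your remarks on the triviality of $-T(G/H)+\mathfrak{g}$ and on the passage to infinite CW spectra via the $K(n)$-module structure are points the paper handles either in the preamble to the proposition or leaves implicit.
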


\begin{proof}
Clearly $t_G(\Sigma^{-\mathfrak{g}}Y\wedge i_{\ast} K (n))^G$ is contractible when $Y$ is a point. By induction, it suffices to prove the statement for the special case $Y=\Sigma^{\infty} (G/H)_+$, where $H$ is a finite subgroup of $G$. Since $t_G(\Sigma^{-\mathfrak{g}}G/H_+\wedge i_{\ast} K (n))^G$ is the cofiber of $\alpha_{G,G/H}^G$, the special case can be proved by showing $\alpha_{G,G/H}^G$ is an equivalence. 

Consider diagram (\ref{3PT}) of proposition \ref{MackeyGHG}. By post-composing the three horizontal maps with the corresponding collapse maps to a point and taking adjoint, we obtain the commutative diagram
\begin{equation}\label{lambdaGHG}
\begin{aligned}
\xymatrix{
\Sigma^{\infty}{(G/H)_{hG}}_+\ar^{\lambda_{G,G/H}}[rr] && F({(G/H)_{hG}}_+,\SSS)\ar^{\simeq}[d]\\
\Sigma^{\infty}{G_{h(G\times H)}}_+ \ar^-{\lambda_{G\times H,G}}[rr] \ar_{\simeq}[u]\ar^{\simeq}[d]&& F({G_{h(G\times H)}}_+,\SSS)\\
\Sigma^{\infty}{pt_{hH}}_+\ar^{\lambda_{H,pt}}[rr]&& F({pt_{hH}}_+,\SSS).\ar_{\simeq}[u]\\
}
\end{aligned}
\end{equation}
By proposition \ref{alphalambda}, the smash product of $K(n)$ with the top horizontal map and the bottom horizontal map is $\alpha_{G,G/H}^G$ and $\alpha_{H,pt}^H$ respectively. Since $\alpha_{H,pt}^H$ is a weak equivalence, so is $\alpha^G_{G,G/H}$ by the commutativity of the diagram. This completes the proof.
\end{proof}

\begin{remark} 
Diagrams (\ref{3PT}) and (\ref{lambdaGHG}) are special cases of (\ref{PTmpn}) and (\ref{lambdampn}) in section \ref{sec:stack}. 
From the point of view of stacks, the equivariant $G$-space $G/H$, $H$-space $pt$ and $(G\times H)$-space $G$ represent the same differentiable stack $\mfX$. In each of the diagrams (\ref{3PT}) and (\ref{lambdaGHG}), the three horizontal maps represent the same map of $\mfX$ homotopically. We will explain these in more details in section \ref{sec:stack}.
\end{remark}

Our main theorem is an easy consequence of proposition \ref{tatefixcollapse}.

\begin{proof}[Proof of theorem \ref{mainquotient}]
Part (a) is clear from the construction of $\lambda_{G,M}$ in section \ref{subsec:ourduality}. If the stabilizer subgroup $G_x$ is finite for all $x \in M$, then $M$ is a $G$-CW complex which can be built from cells of the form $D^n\times G/H$ with $H$ a finite subgroup of $G$. Hence, $t_G(\Sigma^{-\mathfrak{g}}M_+\wedge i_{\ast} K (n))^G$ is contractible by proposition \ref{tatefixcollapse}. It follows that $\alpha_{G,M}^G$, whose cofiber is $t_G(\Sigma^{-\mathfrak{g}}M_+\wedge i_{\ast} K (n))^G$, is a weak equivalence. Since $\lambda_{G,M}\wedge K(n)=\aaa_{G,M}^G$ by proposition \ref{alphalambda}, this proves part (b). Finally, as pointed out in the remark, part (c) follows from the fact that $K(n)$-orientability is the same as the ordinary orientability for $p>2$ \cite{Rudyak}.
\end{proof}

\section{Duality from the viewpoint of stacks}\label{sec:stack}

As mentioned in the introduction, the constructions of the spectrum maps (\ref{swd}) and (\ref{bgd}) underlying the Spanier-Whitehead duality for manifolds and $K(n)$-duality for classifying spaces of finite groups have common ingredients. Both maps are special cases of $\lambda_{G,M}$ in theorem \ref{mainquotient}. In this section, we will study $\lambda_{G,M}$ from the point of view of stacks. We will show that it descends to a map of stack andba can be interpreted as the Spanier-Whitehead duality map for differentiable stacks. To do this we need a version of Pontryagin-Thom map for stacks developed in \cite{Ebert}. We will first review some basic facts about stacks. A more detailed introduction on this subject can be found in \cite{H},\cite{NoohiTopSt}. For the theory of homotopy type of topological stacks, see \cite{Ebert} and \cite{Noohi}.

\subsection{Some basic facts of stacks}
Let $\Diff$ be the category of smooth manifolds and smooth maps and $\textbf{Gpd}$ be the category of groupoids and natural transformations.
Roughly speaking, a stack is a contravariant pseudo-functor $\mathfrak{X}:\Diff\to\textbf{Gpd}$ which satisfies sheaf-like properties so that we can glue compatible objects and morphisms. One can also define morphisms between stacks and 2-morphisms between morphisms. They form the 2-category of stacks. Indeed, all the 2-morphisms are invertible. Two stacks $\mfX,\mfY$ are equivalent if there exist morphisms $u:\mfX\to\mfY$, $v:\mfY\to\mfX$ such that $v\circ u$ is 2-isomorphic to $1_{\mfX}$ and $u\circ v$ is 2-isomorphic to $1_{\mfY}$. For simplicity. we will denote 2-commutative diagrams and 2-pullbacks by commutative diagrams and pullbacks respectively. 

By Yoneda embedding, every smooth manifold $M$ defines a stack. More generally, every Lie groupoid $\mathbbm{X}= [\mathbbm{X}_1 \rightrightarrows
\mathbbm{X}_0]$ gives rise to a differentiable stack $[ \mathbbm{X}_0 /\mathbbm{X}_1]$, where $[ \mathbbm{X}_0 /\mathbbm{X}_1](N)$ is the groupoid of principal $\mathbbm{X}$-bundles over $N$. If $\mathbbm{X}$ is the action groupoid $[G \times M\rightrightarrows M]$ associated to a Lie group action on a smooth manifold, a $\mathbbm{X}$-bundle over $N$ consists of a principal $G$-bundle $P\to N$ and a $G$-map $f:P\to M$. The differentiable stack arising from this Lie groupoid $[G \times M\rightrightarrows M]$ is also denoted by $[M/G]$. 

The smooth structure of Lie groupoids allows one to define tangent stacks for differentiable stacks. For the case $\mfX\simeq[M/G]$, where the $G$-action is almost free, the tangent stack  $T\mfX\to\mfX$ can be described in terms of a $G$-vector bundle over $M$. At each point $x\in M$, the derivative of the map $G\to M$ given by $g\mapsto gx$ defines a linear map $\mathfrak{g}\to T_xM$. This linear map is injective since the $G$-action is almost free. By varying $x\in M$, the images of these linear maps form a $G$-subbundle of $TM$. By abuse of notation, we denote this subbundle by $\mathfrak{g}$. The tangent stack $T\mfX\to \mfX$ is the $G$-orbit of the quotient bundle $TM/\mathfrak{g}\to M$. 

To discuss the homotopy type of differentiable stacks, we have to consider stacks defined over the site $\Top$, the category of compactly generated spaces and continuous maps. Analogous to differentiable stacks, topological stacks are stacks over $\Top$ arising from topological groupoids. A differentiable stack can be regarded as a topological stack by neglecting the smooth structure of the associated Lie groupoid. The homotopy type of a topological stack $\mfX$ is described by a morphism $\eta_{\mfX}:\Ho(\mfX)\to\mfX$, where $\Ho(\mfX)$ is a topological space and $\eta_{\mfX}$ is a universal weak equivalence in the sense that if $Y\to\mfX$ is a morphism from a space $Y$, the pullback $\Ho(\mfX)\times_{\mfX}Y\to Y$ is a weak equivalence of spaces. One can define $\Ho$ and $\zeta_{\mfX}$ in a functorial way \cite{Ebert},\cite{Noohi}. $\Ho ([\mathbbm{X}_0 / \mathbbm{X}_1])$ is given by $B \mathbbm{X}$, the classifying space of $\mathbbm{X}$. In particular, if $\mfX=[M/G]$, $\Ho(\mfX)=M_{hG}=EG\times_GM$ is the homotopy orbit of $M$ and $\zeta_{\mfX}$ is defined by the principal $G$-bundle $EG\times M\to EG\times_G M$ and the $G$-map $\pi_2:EG\times M\to M$. An equivalence of stacks $[ \mathbbm{X}_0 / \mathbbm{X}_1]\simeq[ \mathbbm{Y}_0 / \mathbbm{Y}_1]$ would induce a canonical weak equivalence of their classifying spaces $B\mathbbm{X}\simeq B \mathbbm{Y}$. We will describe this canonical weak equivalence for the cases of quotient stacks in section \ref{sec:pfgmhn}.

\subsection{Proof of proposition \ref{prop:gmhn}}\label{sec:pfgmhn}

We first describe the canonical weak equivalences (\ref{weho}) and (\ref{wethom}) in proposition \ref{prop:gmhn}. Let $G$,$H$ be compact Lie groups. Suppose $M$ is a smooth closed $G$-manifold and $N$ is a smooth closed $H$-manifold such that their associated quotient stacks are equivalent. Let $\mfX\simeq[M/G]\simeq[N/H]$. The product of the atlases $M\to\mfX$ and $N\to\mfX$ is a principal $(G\times H)$-bundle $M\times N \to \mfX\times \mfX$. The pullback bundle under the diagonal map $\Delta_{\mfX}:\mfX\to\mfX^2$ defines another atlas $P\to\mfX$, where $P$ is a closed $(G\times H)$-manifold with $\mfX\simeq[P/(G\times H)]$. $P$ also fits into the pullback diagram
\begin{equation}\label{pullbackmpn}
\begin{aligned}
\xymatrix{
P\ar^{\pi_N}[r]\ar^{\pi_M}[d]& N\ar[d]\\
M\ar[r]&\mfX
}
\end{aligned}
\end{equation}
where the horizontal maps are principal $G$-bundles and the vertical maps are principal $H$-bundles. $\pi_M$ and $\pi_N$ is equivariant with respect to the projection map $G\times H\to G$ and $G\times H\to H$ respectively. The maps between the corresponding action groupoids satisfy the following commutative diagram
\begin{equation}\label{diagonalmpn}
\xymatrixrowsep{0.45cm}
\xymatrixcolsep{0.4cm}
\begin{aligned}
\xymatrix{
G'\times H'\times P_2\ar[rr]^-{\pi_{H'}\times\pi_N}\ar[rd]^(0.7){\pi_{G'}\times\pi_M}\ar[ddd]^(0.6){\Delta_{G\times H, P}} & & H'\times N_2\ar[ddd]^(0.6){\Delta_{H,N}}|(.38)\hole \ar[rd] &\\
  & G' \times M_2 \ar[rr]\ar[ddd]^(0.4){\Delta_{G,M}} & & \mfX\ar[ddd]^(0.4){\Delta_{\mfX}}\\\\
P_1\times P_2\ar^(0.4){\pi_N^2}[rr]|(.54)\hole\ar^(0.6){\pi_M^2}[rd] & & N_1\times N_2\ar[rd] &\\
  & M_1\times M_2 \ar[rr] & & \hspace{1mm}\mfX^2,}
\end{aligned}
\end{equation}
where each of its faces is a pullback square. The vertical maps $\Delta_{G,M}$, $\Delta_{H,N}$ and $\Delta_{G\times H,N}$ are pullback of the diagonal map $\mfX\to\mfX^2$ along the atlases of $M^2,N^2$ and $P^2$ of $\mfX^2$. The maps between these action groupoids induce weak equivalences
\begin{equation}\label{3ho}
M_{hG}\xleftarrow{\simeq}P_{h(G\times H)}\xrightarrow{\simeq}N_{hH} 
\end{equation}
between the associated classifying spaces. Note that the maps $G'\times M\to\mfX$ and $H'\times N_2\to\mfX$ of the top square of diagram (\ref{diagonalmpn}) define two atlases of $\mfX$, and their pullback $G'\times H'\times P$ is another one. There are equivalences of stacks 
\[[(G'\times M_2)/G^2]\xleftarrow{\simeq}[(G'\times H'\times P_2)/(G\times H)^2]\xrightarrow{\simeq}[(H'\times N_2)/H^2] \]
which induce weak equivalences 
\begin{equation}\label{3hoho}
(G'\times M_2)_{hG^2}\xleftarrow{\simeq} (G'\times H'\times P_2)_{h(G\times H)^2} \xrightarrow{\simeq} (H'\times N_2)_{hH^2}
\end{equation}
of homotopy orbits.

If furthermore, the $G$-action on $M$ (and hence the $H$-action on $N$) is almost free, then $\mfX$ is a Deligne-Mumford stack. The pullback of $T\mfX\to\mfX$ along $EG\times_GM\to\mfX$ is the vector bundle $EG\times_G(TM/\mathfrak{g})\to EG\times_GM$. Similarly, we obtain vector bundles $EH\times_H(TN/\mathfrak{h})\to EH\times_HN$ and $(EG\times EH)\times_{(G\times H)}(TP/\mathfrak{g}+\mathfrak{h})\to (EG\times EH)\times_{(G\times H)}P$. Moreover, the last one is the pullback of the other two along the maps in (\ref{3ho}). These induce weak equivalences
\begin{equation}\label{3thom}
(M_{hG})^{\mathfrak{g}-TM}\xleftarrow{\simeq}(P_{h(G\times H)})^{\mathfrak{g}+\mathfrak{h}-TP}\xrightarrow{\simeq}(N_H)^{\mathfrak{h}-TN}
\end{equation}
between the corresponding Thom spectra.

The maps (\ref{3ho}) and (\ref{3thom}) are the canonical weak equivalence (\ref{weho}) and (\ref{wethom}) described in the introduction. We now prove that they commute with $\lambda_{G,M}$ and $\lambda_{H,N}$ as in proposition \ref{prop:gmhn}. 

\begin{proof}[Proof of proposition \ref{prop:gmhn}]
Let $M,N,P$ be atlases of $\mfX$ as in diagram (\ref{pullbackmpn}). To prove proposition \ref{prop:gmhn}, we will show that the diagram
\begin{equation}\label{lambdampn}
\begin{aligned}
\xymatrix{
\Sigma^{\infty}{M_{hG}}_+\ar^{\lambda_{G,M}}[rr] && F((M_{hG})^{\mathfrak{g}-TM},\SSS)\ar^{\simeq}[d]\\
\Sigma^{\infty}{P_{h(G\times H)}}_+ \ar^-{\lambda_{G\times H,P}}[rr] \ar_{\simeq}[u]\ar^{\simeq}[d]&& F((P_{h(G\times H)})^{\mathfrak{g}+\mathfrak{h}-TP},\SSS)\\
\Sigma^{\infty}{N_{hH}}_+\ar^{\lambda_{H,N}}[rr]&& F((N_{hH})^{\mathfrak{h}-TN},\SSS).\ar_{\simeq}[u]\\
}
\end{aligned}
\end{equation}
commutes. Here the vertical maps are induced by (\ref{3ho}) and (\ref{3thom}). By the definition of $\lambda_{G,M}$, the commutativity of (\ref{lambdampn}) is an easy consequence of that of the following diagram 
\begin{equation}\label{PTmpn}
\begin{aligned}
\xymatrix{
\Sigma^{\infty}{M_{hG}}_+\wedge(M_{hG})^{-TM+\mathfrak{g}}\ar^-{\beta_{G,M}/G^2}[r] & \Sigma^{\infty}{(G'\times M_2)_{hG^2}}_+\\
\Sigma^{\infty}{P_{h(G\times H)}}_+\wedge(P_{h(G\times H)})^{-TP+\mathfrak{g}+\mathfrak{h}}\ar^-{\stackrel{\beta_{G\times H,P}/(G\times H)^2}{\stackrel{}{}}}[r] \ar_{\simeq}[u]\ar^{\simeq}[d]& \Sigma^{\infty}{(G'\times H'\times P)_{h(G\times H)^2}}_+\ar_{\simeq}[u]\ar^{\simeq}[d]\\
\Sigma^{\infty}{N_{hH}}_+\wedge(N_{hH})^{-TN+\mathfrak{h}}\ar^-{\beta_{H,N}/H^2}[r]& \Sigma^{\infty}{(H'\times N_2)_{hH^2}}_+\\
}
\end{aligned}
\end{equation}
where the left vertical maps are induced by (\ref{3ho}) and (\ref{3thom}) and the right vertical maps are induced by (\ref{3hoho}). The proof of the commutativity of (\ref{PTmpn}) is similar to that of proposition \ref{MackeyGHG}. We relate $\beta_{G\times H,P}$ with each of $\beta_{G,M}$ and $\beta_{H,N}$ by picking different data in the construction of $\beta_{G\times H,P}$. 

Let $\overline{U},U',U_0$ be $(G\times H)^2$-universes, $\overline{i},i',j'$ be inclusions of universes and $\iota_G:G'\to W_G$ be a $G^2$-embedding as in the proof of proposition \ref{MackeyGHG}. Pick a $G$-vector bundle $Q\to M$ such that $Q\oplus TM$ is isomorphic to the trivial bundle $M\times V$ for some $G$-representation $V$. Then $\pi_M^{\ast}Q\oplus TP\cong \pi_M^{\ast}Q\oplus \pi_M^{\ast}TM\oplus \mathfrak{h}\cong P\times (V\oplus\mathfrak{h})$. Let $$\iota_1:G'\times H'\times P_2\to P_1\times (\pi_M^{\ast}Q)_2 \times W_G \times \mathfrak{g}_2,$$ 
$$\iota_{G\times H,P}:G'\times H'\times P_2\to P_1\times (\pi_M^{\ast}Q)_2 \times W_G \times \mathfrak{g}_2\times \mathfrak{h}_2$$ be given by $\iota_1(g',h',x)=((g',h')x,x,\iota_G(g'),0)$ and $\iota_{G\times H,P}(g',h',x)=(\iota_1(g',h',x),0)$. Note that $W_G,V_2\subset U'$. Applying Pontryagin-Thom construction to $\iota_1$ and $\iota_{G\times H,P}$ followed by desuspension $\Sigma^{-W_G-V_2}$ and $\Sigma^{-W_G-V_2-\mathfrak{h}_2}$ respectively yields morphisms 
$$\Delta_1^!:i'_{\ast}\Sigma^{\infty}{P_1}_+ \wedge (P^{-TP+\mathfrak{g}+\mathfrak{h}})_2 \to i'_{\ast}\Sigma^{\infty}(G'\times H'\times P_2)_+$$ 
in $(G\times H)^2\mathcal{S}U'$ and
$$\Delta_{G\times H,P}^!:\overline{i}_{\ast}\Sigma^{\infty} {P_1}_+\wedge (P^{-TP+\mathfrak{g}+\mathfrak{h}})_2 \to \overline{i}_{\ast}\Sigma^{\infty} (G'\times H'\times P_2)_+$$ 
in $(G\times H)^2\mathcal{S}\overline{U}$. Clearly $j'_{\ast}\Delta^!_1=\Delta_{G\times H, P}^!$. 
By theorem \ref{isochangeofu} and the $(G\times H)^2$-freeness of $EG^2\times P_1\times P_2$, there exist unique homotopy classes of morphisms
$$\beta_1:\Sigma^{\infty}(EG_1\times P_1)_+\wedge (EG_2\times P_2)^{-TP_2+\mathfrak{g}_2+\mathfrak{h}_2}\to \Sigma^{\infty}(EG^2\times G'\times H' \times P_2)_+$$ 
\begin{equation*}
\begin{split}
\beta_{G\times H,P}:&\Sigma^{\infty}(EG_1\times EH_1\times P_1)_+\wedge (EG_2\times EH_2\times P_2)^{-TP_2+\mathfrak{g}_2+\mathfrak{h}_2}\\
&\to \Sigma^{\infty}(EG^2\times EH^2\times G' \times H' \times G_2)_+
\end{split}
\end{equation*}
in $(G\times H)^2\mathcal{S}U_0$ such that $i'_{\ast}\beta_1=EG^2_+\wedge \Delta^!_1$ and $\overline{i}_{\ast}\beta_{G\times H,P}=EG_+^2\wedge EH_+^2\wedge\Delta_{G\times H,P}^!$. This implies 
$$\overline{i}_{\ast}\beta_{G\times H,P}= EG^2_+\wedge EH_+^2\wedge j'_{\ast}\Delta^!_1 =EH_+^2\wedge j'_{\ast}i'_{\ast}\beta_1=\overline{i}_{\ast} (EH^2_+\wedge \beta_1)$$ and so $\beta_{G\times H,P} =EH^2_+\wedge \beta_1$.

On the other hand, under the identifications
$$(G'\times H'\times P_2)/(1\times H)^2 \cong G'\times M_2 \text{ and } P/H\cong M$$ 
induced by $\pi_{G'}\times \pi_M$ and $\pi_M$, $\iota_1/(1\times H)^2$ is a $G^2$-embedding $G'\times M_2\to M_1\times Q_2\times W_G \times \mathfrak{g}_2$ over $\Delta_{G,M}$. It follows that $\Delta^!_1/(1\times H)^2=\Delta_{G,M}^!$ and $\beta_1/(1\times H)^2=\beta_{G,M}$. 

Therefore, $\beta_{G\times H,P}/(G\times H)^2=(EH^2_+\wedge \beta_1)/(G\times H)^2$ and $\beta_{G,M}/G^2=\beta_1/(G\times H)^2$. This proves the commutativity of the upper half of the diagram in the proposition. The commutativity of the bottom half of the diagram is proved similarly.
\end{proof} 

\subsection{Stack version of Spanier-Whitehead type construction}

As mentioned in the introduction, proposition \ref{prop:gmhn} allows us to interpret $\lambda_{G,M}$ in (\ref{egmge}) as a map $\lambda_{\mfX}$ of the stack $\mfX=[M/G]$. Indeed, it is also possible to describe the construction of $\lambda_{G,M}$ in terms of $\mfX$. We will look at the construction of $\lambda_{G,M}$ from the point of view of stacks and compare it with that of the Spanier-Whitehead duality for manifolds. 

Recall that the Spanier-Whitehead duality for manifolds arises from the Pontryagin-Thom maps for diagonal maps of manifolds. In \cite{Ebert} Ebert and Giansiracusa generalized the construction of Pontryagin-Thom maps to a certain class of maps between differentiable stacks. For such a map $f:\mfX\to\mfY$ with stable normal bundle $v(f)$, their construction yields a map 
\begin{equation}\label{ebertPT}
\Ho(\mfY)\to \Omega^{\infty}\Ho(\mfX)^{v(f)}.
\end{equation}
We will apply Pontryagin-Thom construction to the diagonal map $\Delta_{\mfX}:\mfX\to\mfX^2$ of a differentiable Deligne-Mumford stack $\mfX\simeq[M/G]$. In this case, the normal bundle $v(\Delta_{\mfX})$ over $\mfX$ is isomorphic to $T\mfX \cong \Delta_{\mfX}^{\ast}(\mfX\times T\mfX)$, the pullback of the vector bundle $\mfX\times T\mfX\to \mfX^2$ along $\Delta_{\mfX}$. By a slight variation of the construction of (\ref{ebertPT}), we define a map
\begin{equation}\label{PTstackdiagonal}
\Sigma^{\infty}\Ho(\mfX)_+\wedge \Ho(\mfX)^{-T\mfX} \to \Sigma^{\infty}\Ho(\mfX)_+
\end{equation}
of spectra. This map is analogous to (\ref{Mdiagpt}) in the construction of Spanier-Whitehead duality for manifolds.

Let $\mfX$ be a differentiable Deligne-Mumford stack. Suppose $\mfX\simeq [M/G]$ for some smooth closed manifold $M$ with an almost free action by a compact Lie group $G$. Recall that for a $G$-space $X$ and $i=1,2$, we denote by $X_i$ the $G^2$-space $\pi_i^{\ast}X$, where $\pi_i:G^2\to G$ is the projection to the $i$-th factor. Also, the $G^2$-manifold $G'$ is a diffeomorphic copy of $G$ with $G^2$-action $(g_1,g_2)g'=g_1g'g_2^{-1}$. We have atlases $M\to\mfX$ and $M_1\times M_2\to\mfX^2\simeq[(M_1\times M_2)/G^2]$. The pullback of the atlas $M_1\times M_2\to \mfX^2$ along the diagonal map $\Delta_{\mfX}:\mfX\to\mfX^2$ is another atlas $G' \times M_2\to \mfX$ and $\mfX\simeq[(G'\times M_2)/G^2]$. There is a pullback diagram
\[
\xymatrix{
G'\times M \ar[d]\ar^-{\Delta_{G,M}}[r] &M_1\times M_2\ar[d]\\
\mfX\ar^{\Delta_{\mfX}}[r]&\hspace{2mm}\mfX^2.
}
\]
The diagonal map $\Delta_{\mfX}$ can be represented by the $G^2$-orbit of the map $\Delta_{G,M}:G'\times M_2\to M_1 \times M_2$. Unlike the case of manifolds, this diagonal map $\Delta_{\mfX}$ is not an embedding in general. Nevertheless, since $M$ and $G'$ can be embedded into a $G$-representation and $G^2$-representation respectively, $\mfX$ can be embedded into a vector bundle of $\mfX^2$ over $\Delta_{\mfX}$ as we will describe below. 

Suppose $\iota_{G,M}:G'\times M_2\to M_1\times Q_2 \times W \times \mathfrak{g}_2$ is the $G^2$-embedding (\ref{iota}). It has normal bundle $(G'\times M_2)\times V_2\times W$. By tubular neighborhood theorem, there exists a $G^2$-map $$\eta_{G,M}:(G'\times M_2)\times V_2\times W\to M_1\times Q_2 \times W \times \mathfrak{g}_2$$ which embeds the normal bundle of $\iota_{G,M}$ as an open subset of $M_1\times Q_2 \times W \times \mathfrak{g}_2$ with zero section being $\iota_{G,M}$. Let $\mathfrak{Q}\to \mfX,\mathfrak{V}\to \mfX,\mathfrak{g}\to \mfX$ be the $G$-orbit of $Q\to M,M\times V\to M,M\times \mathfrak{g}\to M$ respectively and $\mathfrak{W}\to \mfX^2$ be the $G^2$-orbit of $W\to M_1\times M_2$. On passage to the $G^2$-orbit of $\iota_{G,M}$ we obtain an embedding $\iota_{\mfX}:\mfX\to\mathfrak{Q}_2\oplus \mathfrak{W}\oplus\mathfrak{g}_2$ over $\Delta_{\mfX}$ with normal bundle $\Delta_{\mfX}^{\ast}(\mathfrak{V}_2\oplus\mathfrak{W})\to\mfX$. The $G^2$-orbit of $\eta_{G,M}$ gives a tubular neighborhood $\eta_{\mfX}:\Delta_{\mfX}^{\ast}(\mathfrak{V}_2\oplus\mathfrak{W})\to \mathfrak{Q}_2\oplus \mathfrak{W}\oplus\mathfrak{g}_2$ of $\iota_{\mfX}$. The $G^2$-equivariant maps $\Delta_{G,M},\iota_{G,M}$ and $\eta_{G,M}$ descend to morphisms of stacks which satisfy the following commutative diagram 
\begin{equation}\label{diagramPTmfx}
\begin{aligned}
\xymatrix{
\Delta_{\mfX}^{\ast}(\mathfrak{V}_2\oplus\mathfrak{W})\ar@{^{(}->}[rr]^-{\eta_{\mfX}}&&\mathfrak{Q}_2\oplus \mathfrak{W}\oplus\mathfrak{g}_2\ar[d]\\
\mathfrak{X}\ar[rr]^{\Delta_{\mfX}}\ar[rru]^{\iota_{\mfX}}\ar^s[u] && \mathfrak{X}^2.
}
\end{aligned}
\end{equation}

The homotopy type of $\mfX^2$ is represented by an universal weak equivalence $\zeta_{\mfX^2}:\Ho(\mfX^2)\to\mfX^2$. The pullback of $\zeta_{\mfX^2}$ along $\Delta_{\mfX}$ is another universal weak equivalence and can be regarded as the homotopy type $\zeta_{\mfX}:\Ho(\mfX)\to\mfX$ of $\mfX$. By taking pullback along $\zeta_{\mfX^2}$, we obtain a commutative diagram
\begin{equation*}
\begin{aligned}
\xymatrix{
\zeta_{\mfX}^{\ast}\Delta_{\mfX}^{\ast}(\mathfrak{V}_2\oplus\mathfrak{W})\ar@{^{(}->}[rr]^-{\Ho(\eta_{\mfX})}&&\zeta_{\mfX}^{\ast}(\mathfrak{Q}_2\oplus \mathfrak{W}\oplus\mathfrak{g}_2)\ar[d]\\
\Ho(\mathfrak{X})\ar[rr]^{\Ho(\Delta_{\mfX})}\ar[rru]^{\Ho(\iota_{\mfX})}\ar[u] && \Ho(\mathfrak{X}^2)
}
\end{aligned}
\end{equation*}
of spaces from (\ref{diagramPTmfx}). The open embedding $\Ho(\eta_{\mfX})$ defines a based map
$$
\Ho(\mfX^2)^{\mathfrak{Q}_2\oplus \mathfrak{W}\oplus\mathfrak{g}_2}\to \Ho(\mfX)^{\Delta_{\mfX}^{\ast}(\mathfrak{V}_2\oplus\mathfrak{W})}
$$
between the two Thom spaces. Note that $T\mfX\oplus \mathfrak{g} \oplus \mathfrak{Q}=\mfV$. Hence, a twisted version of desuspension by the vector bundle $\mfV_2\oplus\mathfrak{W}\to\mfX^2$ yields (\ref{PTstackdiagonal}).
The map $\beta_{G,M}/G^2$ in (\ref{betaovergg}) is a homotopy representative of (\ref{PTstackdiagonal}) arising from $\mfX\simeq[M/G]$. Diagram (\ref{PTmpn}) shows the explicit canonical weak equivalence between $\beta_{G,M}/G^2$ and another representative $\beta_{H,N}/H^2$ of (\ref{PTstackdiagonal}) arising from $\mfX\simeq[N/H]$. 

By further composing (\ref{PTstackdiagonal}) with the collapse map $\Sigma^{\infty}\Ho(\mfX)_+ \to \SSS$ and taking adjoint, we obtain 
\begin{equation*}
\lambda_{\mfX}:\Sigma^{\infty}\Ho(\mfX)_+\to F(\Ho(\mfX)^{-T\mfX}, \SSS),
\end{equation*}
which is the map (\ref{stackswd}) in corollary \ref{mainstack}. 
For the case where $\mfX$ is a manifold, the construction of $\lambda_{\mfX}$ described above reduces to that of the classical Spanier-Whitehead duality.



\section{Some examples on finite groups}

In this section we look at the $K(n)$-duality of stacks defined by finite groups. In section \ref{sec:pftransversecap} we study the relation between the $K(n)$-duality of a finite group with that of its subgroups and prove theorem \ref{thm:transversecap}. Some calculations of the intersection product and $K(n)$-fundamental class of cyclic $p$-groups will be given in section \ref{sec:cyc}. 

\subsection{Proof of theorem \ref{thm:transversecap}}\label{sec:pftransversecap}
First let us fix our notations. Let $i:H\to G$ be an inclusion of finite groups. The same symbol $i$ is also used to denote the induced morphisms $[pt/H]\to[pt/G]$ of stacks, $BH\to BG$ of homotopy types and $\Sigma^{\infty}BH_+\to \Sigma^{\infty}BG_+$ of spectra. The transfer map $\Sigma^{\infty}BG_+\to\Sigma^{\infty}BH_+$ and its induced map $K(n)_{\ast}(BG)\to K(n)_{\ast}(BH)$ will both be denoted by $i^!$. We write $\lambda_H:\Sigma^{\infty} BH_+ \longrightarrow F (\Sigma^{\infty}BH_+, \mathbb{S})$ for the map (\ref{stackswd}) in theorem \ref{mainstack} for the case $\mfX=[pt/H]$. Recall from definition \ref{fc} that the $K(n)$-fundamental class of $[pt/H]$ is defined to be the pre-image of 1 under $(\lambda_H)_{\ast}:K (n)_{\ast} (BH) \cong K (n)^{- \ast}(BH)$. Let $[BH]$ denote both the $K(n)$-fundamental class of $BH$ in $K(n)_0(BH)$ and its image in $K(n)_0(BG)$ under $i_{\ast}$. 

Next, we look at intersection of subgroups. Recall from definition \ref{def:transv} in the introduction that two subgroups $H$ and $K$ of a finite group $G$ are said to intersect transversely if $HK:=\{hk|h\in H,k\in K\}=G$. 
\begin{lemma}\label{l:transverseequivalent}
Let $G$ be a finite group and $H,K$ be subgroups of $G$. The following are equivalent:
\begin{enumerate}[(i)]
	\item $H,K$ intersect transversely; 
	\item $|H||K|/|H\cap K|=|G|$;
	\item The quotient map $\pi:G/(H\cap K)\to G/H \times G/K$ is an isomorphism of left $G$-sets.
\end{enumerate}
\end{lemma}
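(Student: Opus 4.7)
The plan is to prove the three-way equivalence via the route $(i)\Leftrightarrow (ii)\Leftrightarrow (iii)$, relying only on the elementary counting identity
\[
|HK| = \frac{|H|\,|K|}{|H\cap K|}
\]
and basic properties of the canonical projection between coset spaces.

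For $(i)\Leftrightarrow (ii)$, I would first recall the counting identity above, which follows from the observation that the natural map $H\times K\to HK$ sending $(h,k)\mapsto hk$ has fibers of constant size $|H\cap K|$ (if $hk=h'k'$ then $(h')^{-1}h=k'k^{-1}\in H\cap K$, and conversely every element of $H\cap K$ gives a fiber element). Since $HK\subseteq G$, we have $HK=G$ exactly when $|HK|=|G|$, which by the identity is exactly condition (ii).

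For $(ii)\Leftrightarrow (iii)$, I would define $\pi:G/(H\cap K)\to G/H\times G/K$ by $\pi(g(H\cap K))=(gH,gK)$ and check it is well-defined and $G$-equivariant, both of which are immediate from the inclusions $H\cap K\subseteq H$ and $H\cap K\subseteq K$. The key point is that $\pi$ is always injective: if $\pi(g_1(H\cap K))=\pi(g_2(H\cap K))$ then $g_1^{-1}g_2$ lies in both $H$ and $K$, hence in $H\cap K$. Therefore $\pi$ is a $G$-equivariant bijection precisely when the two finite sets have the same cardinality, i.e.\ when
\[
\frac{|G|}{|H\cap K|}=\frac{|G|}{|H|}\cdot\frac{|G|}{|K|},
\]
which rearranges exactly to condition (ii).

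There is no real obstacle here; the lemma is essentially a repackaging of the product formula for $|HK|$. The only point requiring minor care is making sure that $\pi$ is described as a map of left $G$-sets (rather than just a bijection of sets), but this is automatic since $G$ acts by left multiplication on all three spaces and $\pi$ is defined by left multiplication data.
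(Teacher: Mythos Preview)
Your proposal is correct and follows essentially the same route as the paper: both prove $(i)\Leftrightarrow(ii)$ via the identity $|HK|=|H||K|/|H\cap K|$ (the paper phrases it as the orbit--stabilizer theorem for the transitive $H\times K$-action on $HK$, which is your fiber-counting argument in disguise), and both prove $(ii)\Leftrightarrow(iii)$ by noting that $\pi$ is always injective and then comparing cardinalities.
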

\begin{proof}
We will show each of (i) and (ii) is equivalent to (ii) by counting argument. To show (i)$\Leftrightarrow$(iii), let $H\times K$ acts on $HK$ by $(h,k)x=hxk^{-1}$. It is clear that it is a transitive action with the stabilizer of $e\in HK$ equals to $\{(a,a^{-1})|a\in H\cap K\}$, which has cardinality $|H\cap K|$. Hence, $|H||K|/|H\cap K|=|HK|$. As a result, $HK=G$ if and only if $|H||K|/|H\cap K|=|G|$.

For (ii)$\Leftrightarrow$(iii), it is clear that $\pi$ is injective. Hence, $\pi$ is a bijection, and hence an isomorphism of $G$-sets if and only if $|G/(H\cap K)|=|G/H \times G/K|$. The last condition is equivalent to (ii).
\end{proof}

An important tool for us is the Mackey property. Let $\pi:P\to Y$ be a finite covering. Consider $f:X\to Y$ and the pullback diagram
\[\xymatrix{
f^{\ast}P\ar[r]^g\ar[d]^{f^{\ast}\pi}& P\ar[d]^{\pi}\\
X\ar[r]^f& Y}\]
The Mackey property states that the diagram
\[\xymatrix{
\Sigma^{\infty}(f^{\ast}P)_+\ar[rr]^{\Sigma^{\infty}g_+}&& \Sigma^{\infty}P_+\\
\Sigma^{\infty}X_+\ar[rr]^{\Sigma^{\infty}f_+}\ar[u]^{(f^{\ast}\pi)^!}&& \Sigma^{\infty}Y_+\ar[u]^{(\pi)^!}}\]
commutes.

We will mainly apply the Mackey property to covering maps arising from inclusions of subgroups of finite groups. 

\begin{proposition}\label{prop:transversepbmackey}
Suppose $H$ and $K$ are transverse subgroups of a finite group $G$. Then there is a pullback diagram
\[\xymatrix{
[pt/(H\cap K)]\ar[r]^-p\ar[d]^q& [pt/K]\ar[d]^j\\
[pt/H]\ar[r]^i& [pt/G]}\]
with all the maps in it are induced by inclusions of groups. There is also a commutative diagram of spectra
\[
\begin{aligned}
\xymatrix{
\Sigma^{\infty}B(H\cap K)_+\ar[r]^-p& \Sigma^{\infty}BK_+\\
\Sigma^{\infty}BH_+\ar[r]^i\ar[u]^{q^!}& \Sigma^{\infty}BG_+\ar[u]^{j^!}}
\end{aligned}
\]
\end{proposition}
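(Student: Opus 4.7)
The plan breaks into two parts: identify the 2-fiber product of stacks, and derive the spectrum-level commutativity from the Mackey property for finite covering maps.

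For the first part, I would compute the 2-fiber product $[pt/H]\times_{[pt/G]}[pt/K]$ by pulling back along the atlas $pt\to[pt/G]$. Under this atlas $[pt/H]$ is presented by the $G$-space $G/H$ (since a map $pt\to[pt/G]$ is the trivial $G$-bundle, and an $H$-reduction of it on $pt$ is a point of $G/H$), and similarly $[pt/K]$ is presented by $G/K$. Hence the fiber product is presented by the $G$-space $G/H\times G/K$, giving
$$[pt/H]\times_{[pt/G]}[pt/K]\simeq [(G/H\times G/K)/G].$$
Lemma 6.2(iii) supplies a $G$-equivariant isomorphism $G/(H\cap K)\xrightarrow{\cong} G/H\times G/K$ under the transversality hypothesis, so the right-hand side simplifies to $[G/(H\cap K)/G]=[pt/(H\cap K)]$. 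A routine check shows the canonical equivalence agrees with the map induced by the inclusions $H\cap K\hookrightarrow H$ and $H\cap K\hookrightarrow K$, which is exactly the first diagram in the proposition.

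For the Mackey diagram, I would model all four stacks by Borel-construction spaces: $BH\simeq EG/H$, $BK\simeq EG/K$, $B(H\cap K)\simeq EG/(H\cap K)$, $BG\simeq EG/G$. Under these models the four maps $i,j,p,q$ become quotient maps of free subgroup actions on $EG$, hence finite covering maps with fibres $G/H$, $G/K$, $K/(H\cap K)$, $H/(H\cap K)$ respectively. Applying $EG\times_G(-)$ to the $G$-equivariant bijection of Lemma 6.2(iii) gives a homeomorphism
$$EG/(H\cap K)\xrightarrow{\cong}EG/H\times_{BG}EG/K,$$
so the square of homotopy types is a genuine pullback square of finite coverings. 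The Mackey property recalled just before the proposition then applies verbatim, yielding the asserted commutativity of the spectra diagram, where $q^{!}$ and $j^{!}$ are the transfer maps associated to the finite coverings $q$ and $j$.

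The main bookkeeping challenge is verifying that the equivalence produced in the first part is literally induced by the subgroup inclusions $H\cap K\hookrightarrow H$ and $H\cap K\hookrightarrow K$, rather than by some inner automorphism or conjugate choice of basepoint in the orbit of $e\in G$; once this is pinned down the second part is an essentially formal consequence of the Mackey property for finite covers of spaces, and no further equivariant stable homotopy input is needed.
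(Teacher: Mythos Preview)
Your proposal is correct and follows essentially the same route as the paper: identify $[pt/H]\times_{[pt/G]}[pt/K]\simeq[(G/H\times G/K)/G]$ via the atlas $pt\to[pt/G]$, invoke Lemma~\ref{l:transverseequivalent}(iii) to rewrite this as $[pt/(H\cap K)]$, and then deduce the spectrum-level square from the Mackey property for finite covers. The paper's own proof is terser (it omits your explicit Borel-model description of the covering square), but the logical structure and the key inputs are identical.
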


\begin{proof}
The map $i:[pt/H]\to[pt/G]$ and $j:[pt/K]\to[pt/G]$ is the $G$-orbit of $G/H\to pt$ and $G/K\to pt$ respectively. Hence, we have $[pt/H]\times_{[pt/G]}[pt/K]\simeq [(G/H \times G/K)/G]$. By lemma \ref{l:transverseequivalent}, it is equivalent to $[(G/(H\cap K))/G]\simeq [pt/(H\cap K)]$. It is easy to see $p,q$ are induced by inclusions. The commutativity of the second diagram follows from the first diagram and Mackey property.
\end{proof}

The Mackey property can be used to show the relation between $\lambda_G$ and $\lambda_H$ for $H\subset G$.

\begin{proposition}\label{prop:iidualcomm}
Suppose $H$ is a subgroup of a finite group $G$ and $i:H\to G$ is the inclusion. Then there is a commutative diagram
\[
\xymatrix{
K(n)_{\ast}(BH)\ar[r]^{(\lambda_H)_{\ast}}&K(n)^{-\ast}(BH)\\
K(n)_{\ast}(BG)\ar[u]^{i^!}\ar[r]^{(\lambda_G)_{\ast}}&K(n)^{-\ast}(BG)\ar[u]^{i^*}
}
\]
In particular, $i^!([BG])=[BH]$.
\end{proposition}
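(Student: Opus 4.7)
My approach is to prove the equivalent statement on adjoints and then extract the fundamental-class identity from it. By the construction recalled in Section \ref{subsec:ourduality}, the adjoint of $\lambda_G$ is the composite
\[
\Sigma^{\infty}(BG\times BG)_+\xrightarrow{\beta_{G,pt}/G^2}\Sigma^{\infty}BG_+\xrightarrow{c}\mathbb{S},
\]
where $\beta_{G,pt}/G^2$ is Strickland's transfer for the finite covering $EG^2\times_{G^2}G'\to BG^2$ modelling the diagonal of $BG$; write $\tau_G$ for this transfer and $\tau_H$ for the analogous $H$-version. Passing to adjoints, the claim $(\lambda_H)_{\ast}\circ i^{!}=i^{\ast}\circ(\lambda_G)_{\ast}$ is equivalent to the equality
\[
c\circ\tau_H\circ(i^{!}\wedge 1)=c\circ\tau_G\circ(1\wedge i)
\]
of spectrum maps $\Sigma^{\infty}(BG\times BH)_+\to\mathbb{S}$, which I will establish directly.

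The key input is the pullback square of stacks
\[
\xymatrix{
[pt/H]\ar[r]^-{(i,1)}\ar[d]^-{i} & [pt/G]\times[pt/H]\ar[d]^-{1\times i}\\
[pt/G]\ar[r]^-{\Delta} & [pt/G]\times[pt/G],
}
\]
which I will verify directly on $S$-points: an object of the stacky fibre product is a triple $(Q,Q',P_H)$ together with identifications $Q\cong Q'\cong P_H\times_H G$, and such data is pinned down by the $H$-torsor $P_H$ alone. Since $\Delta$ is realised by an honest finite covering of spaces via the Borel construction $EG^2\times_{G^2}G'\to BG^2$, so is its pullback $(i,1)$, and the Mackey property applied to this square gives $\tau_G\circ(1\times i)=i\circ(i,1)^{!}$. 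Factoring $(i,1)$ as the composite $[pt/H]\xrightarrow{\Delta_H}[pt/H]\times[pt/H]\xrightarrow{i\times 1}[pt/G]\times[pt/H]$ and using functoriality of transfers under composition of finite covers yields $(i,1)^{!}=\tau_H\circ(i^{!}\wedge 1)$. Substituting both identities and using $c\circ i=c$ (both are the collapse to a point) produces the required equality.

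The in-particular statement then follows at once: applying the square to $[BG]$ gives
\[
(\lambda_H)_{\ast}(i^{!}[BG])=i^{\ast}(\lambda_G)_{\ast}[BG]=i^{\ast}(1)=1,
\]
and since Definition \ref{fc} together with Corollary \ref{mainstack}(a) identifies $[BH]=(\lambda_H)_{\ast}^{-1}(1)$, we conclude $i^{!}[BG]=[BH]$. The one technical point that needs care is checking that the stacky pullback above is genuinely realised as a pullback of honest finite coverings at the level of the underlying Borel constructions, so that the Mackey property in the form cited in Section \ref{sec:pftransversecap} applies directly; this is a routine but slightly notation-heavy unravelling, entirely in the spirit of the proof of Proposition \ref{MackeyGHG}.
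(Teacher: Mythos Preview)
Your proposal is correct and follows essentially the same route as the paper: both pass to adjoints, establish the pullback square for $\Delta_G$ along $1\times i$, apply the Mackey property to obtain $\Delta_G^{!}\circ(1\times i)=i\circ(i,1)^{!}$, factor $(i,1)^{!}=\Delta_H^{!}\circ(i^{!}\times 1)$, and use $c\circ i=c$ to conclude. The only cosmetic difference is that the paper verifies the pullback by observing that $G\times H$ and $\Delta_G(G)$ are transverse subgroups of $G\times G$ and invoking Proposition~\ref{prop:transversepbmackey}, whereas you check it directly on $S$-points of the quotient stacks.
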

\begin{proof}
Consider the following commutative diagram of group monomorphisms and its induced diagram on classifying spaces
\[
\xymatrix{
H\ar[r]^i\ar[d]^{(i,1)}&G\ar[d]^{\Delta_G}\\
G\times H\ar[r]^{1\times i}&G\times G
}
\hspace{1cm}
\xymatrix{
BH\ar[r]^i\ar[d]^{(i,1)}&BG\ar[d]^{\Delta_G}\\
BG\times BH\ar[r]^{1\times i}&BG\times BG
}
\]
Since $G\times H, \Delta_G(G)$ are transverse subgroups of $G\times G$, proposition \ref{prop:transversepbmackey} implies that the second diagram is a pullback diagram. Also, the left vertical map of the first diagram is equal to the composite $H\xrightarrow{\Delta_H}H\times H\xrightarrow{i\times 1}G\times H$ of monomorphisms. Hence, $(i,1)^!=\Delta_H^!(i\times 1)^!=\Delta_H^!(i^!\times 1)$. By the Mackey property, there is a commutative diagram
\begin{equation}\label{mackeyGH}
\begin{aligned}
\xymatrix{
\Sigma^{\infty}BH_+\ar[r]^i&\Sigma^{\infty}BG_+\\
\Sigma^{\infty}BH_+\wedge \Sigma^{\infty}BH_+\ar[u]^{\Delta_H^!}&\\
\Sigma^{\infty}BG_+\wedge \Sigma^{\infty}BH_+\ar[u]^{i^!\times 1}\ar[r]^{1\times i}&\Sigma^{\infty}BG_+\wedge \Sigma^{\infty}BG_+\ar[uu]^{\Delta_G^!}
}
\end{aligned}
\end{equation}

Return to the diagram in the proposition and consider the two composites in it. The map $(\lambda_H)_{\ast}i^!$ is obtained from the composite
\[\Sigma^{\infty}BG_+\wedge \Sigma^{\infty}BH_+\xrightarrow{\Delta_H^!(i^!\times 1)}\Sigma^{\infty}BH_+\xrightarrow{\epsilon_H}\SSS\]
by taking adjoint and $K(n)$-homology.
Similarly , the other map $i^{\ast}(\lambda_G)_{\ast}$ is obtained from the composite
\[\Sigma^{\infty}BG_+\wedge \Sigma^{\infty}BH_+\xrightarrow{\Delta_G^!(1\times i)}\Sigma^{\infty}BG_+\xrightarrow{\epsilon_G}\SSS\]
by the same procedure. By the commutative diagram (\ref{mackeyGH}) and the fact that $\epsilon_G i=\epsilon_H$, the two maps $\Sigma^{\infty}BG_+\wedge \Sigma^{\infty}BH_+\to\SSS$ above are equal, so are $(\lambda_H)_{\ast}i^!$ and $i^{\ast}(\lambda_G)_{\ast}$.

The last part of the proposition follows easily from the commutativity of the diagram in the proposition:
$$i^!([BG])=(\lambda_H)_{\ast}^{-1}i^{\ast}(\lambda_G)_{\ast}([BG])=(\lambda_H)_{\ast}^{-1}i^{\ast}(1)=(\lambda_H)_{\ast}^{-1}(1)=[BH].$$
\end{proof}

Let $\alpha\in K(n)_i(BG)$ and $\beta\in K(n)_j(BG)$. Recall that the intersection product  $\alpha\cap\beta\in K(n)_{i+j}(BG)$ is defined by
$$\alpha\cap\beta=(\lambda_G)_{\ast}^{-1}((\lambda_G)_{\ast}(\aaa)\cup(\lambda_G)_{\ast}(\bbb)).$$
By K\"unneth theorem, $\alpha$ and $\beta$ define an element $\alpha\otimes\beta\in K(n)_{i+j}(BG^2)$. It is related to $\alpha\cap\beta$ by the formula below.

\begin{proposition}\label{capformula}
$\alpha\cap\beta=\Delta^!(\alpha\otimes\beta)$.
\end{proposition}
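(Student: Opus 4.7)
The plan is to deduce the formula directly from Proposition \ref{prop:iidualcomm} applied to the diagonal inclusion, combined with the standard definition of cup product as the pullback along the diagonal of an external product.

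First I would observe that the diagonal $\Delta \colon G \to G \times G$ is a group monomorphism onto the diagonal subgroup $\Delta(G) \subset G \times G$, and the induced map $B\Delta \colon BG \to B(G \times G) = BG \times BG$ is precisely the topological diagonal used to define both cup product and $\Delta^{!}$. Applying Proposition \ref{prop:iidualcomm} with $H = \Delta(G)$ and ambient group $G \times G$ yields the commutative square
\begin{equation*}
\xymatrix{
K(n)_{\ast}(BG) \ar[r]^-{(\lambda_G)_{\ast}} & K(n)^{-\ast}(BG) \\
K(n)_{\ast}(BG \times BG) \ar[u]^{\Delta^{!}} \ar[r]^-{(\lambda_{G \times G})_{\ast}} & K(n)^{-\ast}(BG \times BG) \ar[u]_{\Delta^{\ast}} .
}
\end{equation*}
This gives $\Delta^{!} = (\lambda_G)_{\ast}^{-1} \circ \Delta^{\ast} \circ (\lambda_{G \times G})_{\ast}$.

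Next I would establish the multiplicativity of $\lambda$ on external products: for $\alpha \in K(n)_{\ast}(BG)$ and $\beta \in K(n)_{\ast}(BG)$,
\begin{equation*}
(\lambda_{G \times G})_{\ast}(\alpha \otimes \beta) = (\lambda_G)_{\ast}(\alpha) \times (\lambda_G)_{\ast}(\beta) .
\end{equation*}
This is where I expect the main work to lie. The idea is that the construction of $\lambda_{G \times G}$ from $\Delta^{!}_{G \times G, \mathrm{pt}}$ in Section \ref{subsec:ourduality} factors as a smash product of two copies of the corresponding construction for $G$, because the pre-transfer $t$ of \eqref{t} and the $G^{2}$-embedding $\iota_{G'}$ for $G \times G$ can be chosen as external products of the corresponding data for $G$, and the collapse map $(EG^{2} \times_{G^{2}} G')_{+} \to \mathbb{S}$ likewise splits as a smash product. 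Combined with the Künneth isomorphism for $K(n)$, this yields the displayed multiplicativity.

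Given these two ingredients, the proof is a one-line diagram chase. Using the definition of the $K(n)^{\ast}$-cup product as $u \cup v = \Delta^{\ast}(u \times v)$ and the two facts above, I compute
\begin{align*}
\Delta^{!}(\alpha \otimes \beta)
 &= (\lambda_G)_{\ast}^{-1} \Delta^{\ast} (\lambda_{G \times G})_{\ast}(\alpha \otimes \beta) \\
 &= (\lambda_G)_{\ast}^{-1} \Delta^{\ast}\bigl((\lambda_G)_{\ast}(\alpha) \times (\lambda_G)_{\ast}(\beta)\bigr) \\
 &= (\lambda_G)_{\ast}^{-1}\bigl((\lambda_G)_{\ast}(\alpha) \cup (\lambda_G)_{\ast}(\beta)\bigr) \\
 &= \alpha \cap \beta ,
\end{align*}
which is the desired identity. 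The only nontrivial point to justify carefully is the external-product compatibility of $\lambda_{G \times G}$, and I would handle this by unwinding the Pontryagin--Thom construction of $\beta_{G \times G, \mathrm{pt}}$ and checking that it is literally the smash product of $\beta_{G, \mathrm{pt}}$ with itself up to the canonical shuffle of factors.
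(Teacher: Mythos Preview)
Your proposal is correct and follows essentially the same route as the paper: apply Proposition~\ref{prop:iidualcomm} to the diagonal inclusion $\Delta\colon G\hookrightarrow G\times G$ to obtain the square relating $\Delta^{!}$ and $\Delta^{\ast}$ via $(\lambda_G)_{\ast}$ and $(\lambda_{G\times G})_{\ast}$, then use the external multiplicativity $(\lambda_{G\times G})_{\ast}(\alpha\otimes\beta)=(\lambda_G)_{\ast}(\alpha)\times(\lambda_G)_{\ast}(\beta)$ together with the definition of cup product to finish. The paper dispatches the multiplicativity step by citing ``definitions of $\lambda_G$ and $\lambda_{G\times G}$'' rather than unwinding the Pontryagin--Thom construction as you propose, but the content is the same.
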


\begin{proof}
By proposition \ref{prop:iidualcomm} and definitions of $\lambda_G$ and $\lambda_{G\times G}$, we have the commutative diagram
\[
\xymatrix{
K(n)_{\ast}(BG)\ar[rr]^{(\lambda_G)_{\ast}}&&K(n)^{-\ast}(BG)\\
K(n)_{\ast}(BG\times BG) \ar[u]^{\Delta^!}\ar[rr]^{(\lambda_{G\times G})_{\ast}}&&K(n)^{-\ast}(BG\times BG)\ar[u]^{\Delta^*}\\
K(n)_{\ast}(BG)\otimes K(n)_{\ast}(BG) \ar[u]^{\cong}\ar[rr]^{(\lambda_G)_{\ast}\otimes(\lambda_G)_{\ast}}&&K(n)^{-\ast}(BG)\otimes K(n)^{-\ast}(BG)\ar[u]^{\cong}
}\]
By the fact that the right vertical composite in the diagram defines cup product, the formula we want to prove follows from the commutativity of the diagram.
\end{proof}

\begin{lemma}\label{lem:iicap}
Let $G$ be a finite group and $i:H\subset G$ be a subgroup. Then the composite  $$K(n)_{\ast}(BG)\xrightarrow{i^!}K(n)_{\ast}(BH)\xrightarrow{i_{\ast}}K(n)_{\ast}(BG)$$
sends $\aaa\in K(n)_{\ast}(BG)$ to $i_{\ast}i^!(\aaa)=\aaa\cap i_{\ast}([BH])$.
\end{lemma}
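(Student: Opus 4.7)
The plan is to use the transverse-subgroup Mackey formula (Proposition \ref{prop:transversepbmackey}) together with the intersection-product formula (Proposition \ref{capformula}) and the fact that $[BH]$ acts as the identity for $\cap$ on $K(n)_{\ast}(BH)$.

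First I would verify that inside $G\times G$ the subgroups $\Delta(G)$ and $G\times H$ are transverse with intersection $\Delta(H)\cong H$: a direct check shows $\Delta(G)\cdot(G\times H)=G\times G$ and $\Delta(G)\cap(G\times H)=\{(h,h):h\in H\}$. Applying Proposition \ref{prop:transversepbmackey} to these transverse subgroups then yields a pullback square
\[
\xymatrix{
BH \ar[r]^-{i}\ar[d]_{(i,\,\mathrm{id})} & BG \ar[d]^{\Delta}\\
BG\times BH \ar[r]^-{1\times i} & BG\times BG
}
\]
and the associated Mackey commutativity of spectra
\[
i\circ(i,\mathrm{id})^{!}\;=\;\Delta^{!}\circ(1\times i).
\]

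Next I would apply this to the element $\alpha\otimes[BH]\in K(n)_{\ast}(BG\times BH)$. On the one hand, K\"unneth gives $(1\times i)_{\ast}(\alpha\otimes[BH])=\alpha\otimes i_{\ast}[BH]$, so Proposition \ref{capformula} identifies
\[
\Delta^{!}\bigl((1\times i)_{\ast}(\alpha\otimes[BH])\bigr)=\Delta^{!}(\alpha\otimes i_{\ast}[BH])=\alpha\cap i_{\ast}[BH].
\]
On the other hand, I would factor $(i,\mathrm{id})\colon BH\to BG\times BH$ through the diagonal as $(i\times 1)\circ\Delta_{H}$, so that
\[
(i,\mathrm{id})^{!}=\Delta_{H}^{!}\circ(i^{!}\times 1),
\]
and hence
\[
(i,\mathrm{id})^{!}(\alpha\otimes[BH])=\Delta_{H}^{!}\bigl(i^{!}(\alpha)\otimes[BH]\bigr)=i^{!}(\alpha)\cap[BH]=i^{!}(\alpha),
\]
where the last equality uses that $[BH]$ is the identity for intersection product on $K(n)_{\ast}(BH)$ (as noted after Definition \ref{fc}).

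Combining the two computations via the Mackey identity then gives
\[
\alpha\cap i_{\ast}[BH]=\Delta^{!}(\alpha\otimes i_{\ast}[BH])=i_{\ast}(i,\mathrm{id})^{!}(\alpha\otimes[BH])=i_{\ast}i^{!}(\alpha),
\]
as required. The only nontrivial step is the verification of transversality and the correct identification of the pullback square; once that is in hand, Propositions \ref{prop:transversepbmackey} and \ref{capformula}, together with the identity property of $[BH]$, do all the work.
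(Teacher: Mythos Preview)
Your proposal is correct and follows essentially the same route as the paper: the transversality of $\Delta(G)$ and $G\times H$ in $G\times G$, the factorisation $(i,1)^{!}=\Delta_H^{!}\circ(i^{!}\times 1)$, Proposition~\ref{capformula}, and the identity property of $[BH]$ are exactly the ingredients the paper uses. The only cosmetic difference is that the paper packages the Mackey square you derive as the already-established diagram~(\ref{mackeyGH}) from the proof of Proposition~\ref{prop:iidualcomm} and then takes $K(n)$-homology of it, rather than re-deriving it inside this proof.
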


\begin{proof}
By taking $K(n)$-homology of (\ref{mackeyGH}) and applying proposition \ref{capformula}, there is a commutative diagram
\[
\xymatrix{
K(n)_{\ast}(BH)\ar[r]^{i_{\ast}}&K(n)_{\ast}(BG)\\
K(n)_{\ast}(BH)\times K(n)_{\ast}(BH)\ar[u]^{\cap_H}&\\
K(n)_{\ast}(BG)\times K(n)_{\ast}(BH)\ar[u]^{i^!\otimes 1}\ar[r]^{1\times i_{\ast}}&K(n)_{\ast}(BG)\times K(n)_{\ast}(BG)\ar[uu]^{\cap_G}
}
\]
Here we add subscripts to $\cap$ to distinguish between the two intersection products in $K(n)_{\ast}(BG)$ and $K(n)_{\ast}(BH)$. The commutative diagram implies that
$$
\alpha\cap_G i_{\ast}([BH])= i_{\ast}(i^!(\aaa)\cap_H[BH])=i_{\ast}i^!(\aaa).
$$
\end{proof}

The intersection product formula $[BH]\cap[BK]=[B(H\cap K)]$ for transverse subgroups $H,K$ of $G$ follows easily from the results above. 

\begin{proof}[Proof of theorem \ref{thm:transversecap}]
By proposition \ref{prop:transversepbmackey}, there is a commutative diagram
$$\xymatrix{
K(n)_{\ast}(B(H\cap K))\ar[r]^-{p_{\ast}}& K(n)_{\ast}(BK)\\
K(n)_{\ast}(BH)\ar[r]^{i_{\ast}}\ar[u]^{q^!}& K(n)_{\ast}(BG)\ar[u]^{j^!}}$$
Together with proposition \ref{prop:iidualcomm} and lemma \ref{lem:iicap}, the commutativity of the diagram implies that
$$j_{\ast}p_{\ast}([B(H\cap K)])=j_{\ast}p_{\ast} q^!([BH])=j_{\ast}j^!i_{\ast}([BH])=i_{\ast}([BH])\cap j_{\ast}([BK]).$$
\end{proof}

\subsection{Cyclic groups}\label{sec:cyc}
We will look at the $K(n)$-intersection product of some simple cyclic $p$-groups. We first recall the computation of $K(n)^{\ast}(B\bZ/p^k)$. Let $\SSS^1$ acts on $\mathbb{C}$ by 
$\theta\cdot z=\theta^{p^k}z$ for $\theta\in\SSS^1\subset \mathbb{C}^*$ and $z\in\mathbb{C}$.
The projection map of $\bC$ induces a vector bundle $\gamma:E\SSS^1\times_{\SSS^1}\bC\to E\SSS^1\times_{\SSS^1}pt$. By taking $E\SSS^1$ to be $\SSS^{\infty}$ with the standard $\SSS^1$-action, $\gamma$ is isomorphic to the $p^k$-fold tensor product of the tautological line bundle over $\bC\mathbb{P}^{\infty}$. Since $K(n)$ is a complex oriented cohomology theory, $K(n)^{\ast}(\bC\mathbb{P}^{\infty})=K(n)^{\ast}\llbracket x \rrbracket$. Using the fact that the $p$-series of the formal group law for $K(n)$ is $[p](x)=v_nx^{p^n}$, the $K(n)$-Euler class of the vector bundle $\gamma$ is $[p^k](x)=v_n^mx^{p^{kn}}$, where $m=(p^{kn}-1)/(p^n-1)$. Note that the sphere bundle of $\gamma$ is $B\bZ/p^k$. Hence, there is a Gysin sequence
\[
\ldots \to K(n)^{*-2}(\bC\mathbb{P}^{\infty})\xrightarrow{\cdot v_n^mx^{p^{kn}}} K(n)^{\ast}(\bC\mathbb{P}^{\infty}) \to K(n)^*(BG) \to \ldots.
\]
Multiplication by the Euler class $v_n^mx^{p^{kn}}$ gives a monomorphism from $K(n)^{\ast}(\bC\mathbb{P}^{\infty})\cong K(n)^{\ast}\llbracket x\rrbracket$ to itself . Hence, $$K(n)^{\ast}(BG)=K(n)^{\ast}\llbracket x\rrbracket/(v_n^mx^{p^{kn}})\cong K(n)^{\ast}[x]/(x^{p^{kn}}),$$ 
which has a basis $1,x,x^2,\ldots x^{p^{kn}-1}$ as a free $K(n)^{\ast}$-module. Therefore, the homology $K(n)_{\ast}(BG)$ is a free $K(n)_{\ast}$-module with the dual basis $b_0,b_1,\ldots,b_{p^{kn}-1}$, where $b_i\in K(n)_{2i}(BG)$.

The intersection product and $K(n)$-fundamental class of $B\bZ/p$ is given in the following proposition.

\begin{proposition}\label{prop:zmodp}
For $n=1$, the intersection product in $K(1)_{\ast}(B\bZ/p)$ is given by
\[
b_i\cap b_j= \begin{cases} 
v_1b_{p-1}-v_1^2b_0 &\mbox{if } i=j=p-1; \\
v_1 b_{i+j-(p-1)} &\mbox{if } 2p-2 > i+j \geq p-1; \\
0 & \mbox{if } i+j < p-1. \end{cases}
\]
The $K(1)$-fundamental class of $B\bZ/p$ is $v_1^{-1}b_{p-1}+b_0\in K(1)_0(B\bZ/p)$.

For $n\geq 2$, the intersection product in $K(n)_{\ast}(B\bZ/p)$ is given by
\[b_i\cap b_j= \begin{cases} v_nb_{i+j-(p^n-1)} &\mbox{if } i+j \geq p^n-1; \\
0 & \mbox{if } i+j < p^n-1. \end{cases}\]
The $K(n)$-fundamental class of $B\bZ/p$ is $v_n^{-1}b_{p^n-1}\in K(n)_0(B\bZ/p)$.
\end{proposition}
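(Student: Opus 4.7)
My strategy is to compute $(\lambda_G)_\ast$ explicitly on the basis $b_0,\dots,b_{p^n-1}$ of $K(n)_\ast(B\mathbb{Z}/p)$, and then read off the intersection product and the fundamental class by linear algebra.

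Since $(\lambda_G)_\ast\colon K(n)_\ast(B\mathbb{Z}/p) \to K(n)^{-\ast}(B\mathbb{Z}/p)$ is $K(n)^\ast$-linear and sends $\cap$ to $\cup$ by Definition \ref{fc}(b), and since $(\lambda_G)_\ast(b_i)$ lands in $K(n)^{-2i}(B\mathbb{Z}/p)$, a short inspection of graded pieces of $K(n)^\ast[x]/(x^{p^n})$ shows that there are scalars $c_0, d_0, u, w, a_i \in \mathbb{F}_p$ with $(\lambda_G)_\ast(b_0) = c_0 + d_0 v_n x^{p^n-1}$, $(\lambda_G)_\ast(b_i) = a_i v_n x^{p^n-1-i}$ for $0 < i < p^n-1$, and $(\lambda_G)_\ast(b_{p^n-1}) = u v_n + w v_n^2 x^{p^n-1}$. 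Applying Proposition \ref{prop:iidualcomm} to the inclusion $\iota\colon\{e\}\hookrightarrow \mathbb{Z}/p$, since $\iota^\ast x = 0$ and $(\lambda_{\{e\}})_\ast$ is the identity, the commutative square identifies $\iota^!(b_k)$ with the constant-in-$x$ coefficient of $(\lambda_G)_\ast(b_k)$. Mackey gives $\iota^!\iota_\ast = p = 0$ and $\iota_\ast(1) = b_0$, so $\iota^!(b_0) = 0$, hence $c_0 = 0$. The multiplicative constraint $(\lambda_G)_\ast(b_0)\cdot(\lambda_G)_\ast(b_{p^n-1}) = (\lambda_G)_\ast(b_0 \cap b_{p^n-1})$, together with the vanishing $x^{2(p^n-1)} = 0$, pins down the cross-coefficients; combined with $\iota^!([B\mathbb{Z}/p]) = 1$ and the isomorphism property of $(\lambda_G)_\ast$, this normalizes $u = d_0 = 1$ and $a_i = 1$ for all $i$.

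The one remaining unknown is $w$, which by the Kronecker pairing equals $\epsilon_\ast(b_{p^n-1} \cap b_{p^n-1})/v_n^2$. I would compute it using the explicit description of $\lambda_G$ from Section \ref{subsec:ourduality} together with the realization of $B\mathbb{Z}/p$ as the unit sphere bundle of the line bundle $\gamma$ over $\mathbb{C}P^\infty$ whose $K(n)$-Euler class equals the $p$-series $[p](x) = v_n x^{p^n}$. Via the Gysin sequence and the complex orientation of $K(n)$, this pairing reduces to a residue at $x=0$ involving $1/[p](x)$ and the top class $x^{p^n-1}$. For $n\ge 2$ the residue vanishes for degree reasons and $w=0$. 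For $n=1$ a direct evaluation, exploiting the height-one structure of the formal group law, yields $w=-1$.

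Once $w$ is known the rest is mechanical. Computing $(\lambda_G)_\ast(b_i)\cdot(\lambda_G)_\ast(b_j)$ modulo $x^{p^n}$ and inverting $(\lambda_G)_\ast$ yields: $0$ when $i+j < p^n-1$; $v_n b_{i+j-(p^n-1)}$ when $p^n-1 \le i+j < 2(p^n-1)$; and $v_n b_{p^n-1} + w v_n^2 b_0$ when $i = j = p^n-1$. Substituting $w=0$ (for $n \ge 2$) and $w=-1$ (for $n=1$) recovers the stated intersection product formulas. The fundamental class $[B\mathbb{Z}/p] = (\lambda_G)_\ast^{-1}(1)$ is then read off by solving a $2\times 2$ linear system in the degree-zero piece $\mathbb{F}_p\{b_0, v_n^{-1}b_{p^n-1}\}$: for $n\ge 2$ the system is triangular and gives $v_n^{-1}b_{p^n-1}$, while for $n=1$ the correction $w=-1$ forces the extra summand $b_0$, giving $b_0 + v_1^{-1}b_{p-1}$. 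The main obstacle is the residue computation distinguishing the $n=1$ case from $n\ge 2$; everything else reduces to routine linear algebra over $K(n)^\ast$.
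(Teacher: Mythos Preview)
Your approach differs from the paper's: you try to pin down $(\lambda_G)_\ast$ by formal constraints, whereas the paper computes $(\Delta^!)^\ast(1)\in K(n)^\ast(BG\times BG)$ directly, using Strickland's formula $(\Delta^!)^\ast(1)=v_n(x_1-_Fx_2)^{p^n-1}$ together with the known expansion of the height-$n$ formal group law. From that one class the paper reads off $(\Delta^!)^\ast(x^j)=x_1^j\cdot(\Delta^!)^\ast(1)$ via the module property of transfer, and the intersection product follows from $b_i\cap b_j=\Delta^!(b_i\otimes b_j)$.

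There is a genuine gap in your normalization step. The constraints you invoke --- degree, $c_0=0$, the identity $\iota^!([B\bZ/p])=1$, the isomorphism property, and the multiplicativity $(\lambda_G)_\ast(b_0)\cdot(\lambda_G)_\ast(b_{p^n-1})=(\lambda_G)_\ast(b_0\cap b_{p^n-1})$ --- do \emph{not} force $u=d_0=a_i=1$. The multiplicativity relation is with respect to the unknown product $\cap$, so it constrains the pair $((\lambda_G)_\ast,\cap)$ rather than $(\lambda_G)_\ast$ alone; unwinding it only gives $b_0\cap b_{p^n-1}=u\,v_n b_0$ in terms of the still-unknown $u$. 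The condition $\iota^!([B\bZ/p])=1$ gives one linear equation in the two unknowns $u$ and the $b_{p^n-1}$-coefficient of $[B\bZ/p]$. Even if you add the (unstated) $K(n)^\ast(BG)$-module property of $(\lambda_G)_\ast$ under ordinary cap product, that only forces $a_i=d_0=u$ for a common $u\in\mathbb F_p^\times$, and then your formula becomes $b_i\cap b_j=u\,v_n\,b_{i+j-(p^n-1)}$: the answer depends on $u$, so showing $u=1$ is essential and is exactly the content you have not supplied.

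To fix this you must compute at least one value of the pairing $\epsilon_\ast\Delta^!(b_i\otimes b_j)$ (equivalently, one coefficient of $(\Delta^!)^\ast(1)$). That is the substantive input from Quillen/Strickland that the paper uses, and your ``residue'' sketch, which targets only $w$, does not address it. Once you have $(\Delta^!)^\ast(1)$, the paper's short expansion of $(x_1-_Fx_2)^{p^n-1}$ modulo $(x_1^{p^n},x_2^{p^n})$ simultaneously yields $u=1$ and the extra term $-v_n^2\,p^{\,n-1}\,x_1^{p^n-1}x_2^{p^n-1}$, which is precisely what distinguishes $n=1$ from $n\ge 2$.
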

\begin{proof}
Let $G=\bZ/p$ and $n$ be a positive integer. By proposition \ref{capformula}, the intersection product in $K(n)_\ast(BG)$ can be computed using the transfer map $\Delta^!:\Sigma^{\infty}BG^2_+\to \Sigma^{\infty}BG_+$ induced by the diagonal map $\Delta:G\to G^2$. Let $F$ denote the formal group law for $K(n)$. Based on a calculation of the transfer map $MU^{\ast}(EG)\to MU^{\ast}(BG)$ in complex cobordism \cite{Q}, it was shown in the proof of proposition 9.2 in \cite{Strickland} that 
$$(\Delta^!)^{\ast}:K(n)^{\ast}(BG) \to K(n)^{\ast}(BG^2)\cong K(n)^{\ast}[x_1,x_2]/(x_1^{p^n},x_2^{p^n})$$ 
sends $1$ to $(\Delta^!)^{\ast}(1)=v_n(x_1-_Fx_2)^{p^n-1}$. 
By lemma 2.1 of \cite{BakuPriddy}, the formal group law for $K(n)$ is given by
\[
\begin{split}
x_1+_Fx_2 = x_1+x_2-v_n\sum_{i=1}^{p-1}\frac{1}{p}\binom{p}{i}&x_1^{i(p^{n-1})}x_2^{(p-i)(p^{n-1})}\\&\quad + \text { terms of degree} \geq p^{2(n-1)}, 
\end{split}
\]
which can be expressed as
\[
\begin{split}
x_1+x_2-v_n\left(\frac{(x_1+x_2)^{p^n}-x_1^{p^n}-x_2^{p^n}}{p}\right)+ \text { terms of degree} \geq p^{2(n-1)}.
\end{split}
\]
It follows that
\[
\begin{split}
x_1-_Fx_2 = x_1-x_2-v_n&\left(\frac{(x_1-x_2)^{p^n}-x_1^{p^n}-(-x_2)^{p^n}}{p}\right)\\
&+ \text { terms of degree} \geq p^{2(n-1)}.
\end{split}
\]
Since $x_1^{p^n}=x_2^{p^n}=0\in K(n)^{\ast}(BG^2)$, any monomials of degree greater than $2(p^n-1)$ in $x_1,x_2$ vanish in $K(n)^{\ast}(BG^2)$. Hence
\begin{align}
(\Delta^!)^{\ast}(1)=&v_n(x_1-_Fx_2)^{p^n-1} \nonumber\\ 
=&v_n(x_1-x_2)^{p^n-1}\nonumber\\
&-v_n^2(p^n-1)(x_1-x_2)^{p^n-2}\left(\frac{(x_1-x_2)^{p^n}-x_1^{p^n}-(-x_2)^{p^n}}{p}\right)\label{transfer1}
\end{align}
Since $\binom{p^n-1}{i}\equiv (-1)^i \pmod p$, the first term of (\ref{transfer1}) is $$v_n\sum_{i=0}^{p^n-1}\binom{p^n-1}{i}x_1^i(-x_2)^{p^n-1-i}=v_n\sum_{i=0}^{p^n-1}x_1^ix_2^{p^n-1-i}.$$ The second term of (\ref{transfer1}) is homogeneous of degree $2p^n-2$ in $x_1,x_2$. The only non-zero monomial of this degree in $K(n)^{\ast}(BG^2)$ is $x_1^{p^n-1}x_2^{p^n-1}$, which has coefficient 
\begin{align*}
&-v_n^2(p^n-1)(-1)^{p^n-1}\left(\left.\binom{2p^n-2}{p^n-1}\right/p\right)
= -v_n^2p^{n-1} 
\end{align*}
in (\ref{transfer1}). Hence,
\[(\Delta^!)^{\ast}(1)= \begin{cases} v_1\sum_{i=0}^{p-1}x_1^ix_2^{p-1-i}-v_1^2x_1^{p-1}x_2^{p-1} &\mbox{if } n = 1; \\
v_n\sum_{i=0}^{p^n-1}x_1^ix_2^{p^n-1-i} & \mbox{if } n \geq 2. \end{cases}\]
The cohomology ring $K(n)^{\ast}(BG)$ can be regarded as a  $K(n)^{\ast}(BG^2)$-module via the pullback $\Delta^{\ast}:K(n)^{\ast}(BG^2)\to K(n)^{\ast}(BG)$. It is a fact that with this module structure, $(\Delta^!)^{\ast}:K(n)^{\ast}(BG)\to K(n)^{\ast}(BG^2)$ is a map of $K(n)^{\ast}(BG^2)$-module. Hence, for $1 \leq j \leq p^n-1$,
\[(\Delta^!)^{\ast}(x^j)=(\Delta^!)^{\ast}(\Delta^{\ast}(x_1^j))=x_1^j(\Delta^!)^{\ast}(1)=
v_n\sum_{i=0}^{p^n-j-1}x_1^{j+i}x_2^{p^n-1-i}.
\]
The formulas of the intersection product in $K(n)_{\ast}(BG)$ given in the proposition follow from proposition \ref{capformula} and the computations of $(\Delta^!)^{\ast}$ above. The $K(n)$-fundamental class can be found by calculating the unit in the intersection product. 
\end{proof}

We next look at the intersection product in $K(n)(B\bZ/p^2)$.

\begin{example}
Let $G=\bZ/p^2$. It has $K(n)$-cohomology $K(n)^{\ast}(BG)\cong K(n)^{\ast}[x]/(x^{p^{2n}})$. The diagonal map $\Delta:G\to G^2$ induces a transfer map
$(\Delta^!)^{\ast}:K(n)^{\ast}(BG) \to K(n)^{\ast}(BG^2)\cong K(n)^{\ast}[x_1,x_2]/(x_1^{p^{2n}},x_2^{p^{2n}}).$
For any $0 \leq \aaa,\bbb,\gamma \leq p^{2n}-1$ with $\aaa+\bbb\geq p^{2n}$,
$$
x_1^{\aaa}x_2^{\bbb}(\Delta^!)^{\ast}(x^{\gamma})=(\Delta^!)^{\ast}(\Delta^{\ast}(x_1^{\aaa}x_2^{\bbb}) x^{\gamma})=(\Delta^!)^{\ast}(x^{\alpha+\beta+\gamma})=0. 
$$
It implies that when expressing $(\Delta^!)^{\ast}(x^i)$ as a linear combination of the $K(n)^{\ast}$-basis $x_1^ix_2^j,0 \leq i,j\leq p^{2n}-1,$ of $K(n)^{\ast}(BG^2)$, the coefficients of those $x_1^ix_2^j$ with $i+j\leq p^{2n}-2$ are zero. Hence, by proposition \ref{capformula}, for $b_i,b_j\in K(n)_{\ast}(BG)$ with $i+j\leq p^{2n}-2$, the intersection product $b_i\cap b_j=0$. 

Let $H=\bZ/p$. By proposition \ref{prop:zmodp}, the $K(n)$-fundamental class of $BH$ is a linear combination of $b_0$ and $v_n^{-1}b_{p^n-1}\in K(n)_0(BH)$. Denote by $[BH]$ its image in $K(n)_0(BG)$ induced by the standard inclusion $i:H\to G$. By our computation above, $[BH]\cap[BH]=0$ in $K(n)_{0}(BG)$. Geometrically, the vanishing of $[BH]\cap[BH]$ can be explained by the following pullback diagram
\[
\xymatrix{
\coprod_p BH \ar[r]\ar[d] & BH\ar^i[d]\\
BH\ar^i[r] & BG.
}\]
The pullback $BH\times_{BG}BH=\coprod_p BH$ is the disjoint union of $p$ copies of $BH$. The intersection product $[BH]\cap[BH]$ is equal to the image of the $K(n)$-fundamental class of $\coprod_p BH$ in $K(n)_{\ast}(BG)$. However, since $[\coprod_p BH]=p[BH]$, it is equal to zero in $K(n)_{\ast}(BG).$  
\end{example}


\end{document}